\newtheorem{theorem}{Theorem}[section]
\newtheorem*{theorem*}{Theorem}
\newtheorem{proposition}[theorem]{Proposition}
\newtheorem{lemma}[theorem]{Lemma}
\theoremstyle{definition}
\newtheorem{remark}[theorem]{Remark}
\newtheorem{example}[theorem]{Example}
\newtheorem*{remark*}{Remark}
\numberwithin{equation}{section}
\def\eps{\varepsilon}
\def\Oeh{\mathcal{O}}
\def\Peh{\mathcal{P}}
\def\Reh{\mathcal{R}}
\def\N{\mathbb{N}}
\def\R{\mathbb{R}}
\def\Hh{\mathscr{H}}
\def\supp{\mathrm{supp\,}}
\def\Mbeta{G_\beta}
\newcommand{\loc}{\mathrm{loc}}
\DeclareMathOperator*{\diag}{diag}
\date{}
\begin{document}

\title{Regularity of all minimizers of a class of \\ spectral partition problems}

\author[H. Tavares]{Hugo Tavares}
\address{Hugo Tavares \newline \indent CAMGSD and Departamento de Matem\'atica, Instituto Superior T\'ecnico \newline \indent Pavilh\~ao de Matem\'atica, Av. Rovisco Pais \newline \indent
1049-001 Lisboa, Portugal}
\email{hugo.n.tavares@tecnico.ulisboa.pt}

\author[A. Zilio]{Alessandro Zilio}
\address{Alessandro Zilio \newline \indent (1) Universit\'e de Paris, Laboratoire Jacques-Louis Lions (LJLL), F-75013 Paris, France. \newline \indent (2) Sorbonne Universit\'e, CNRS, LJLL, F-75005 Paris, France}
\email{azilio@math.univ-paris-diderot.fr}
\keywords{elliptic competitive systems, optimal partition problems, Laplacian eigenvalues, segregation phenomena, extremality conditions, regularity of free boundary problems, blowup techniques}

\date{\today}

\maketitle

\begin{center}
    \textit{To Sandro Salsa, with admiration and gratitude.}
\end{center}

\begin{abstract}
We study a rather broad class of optimal partition problems with respect to monotone and coercive functional costs that involve the Dirichlet eigenvalues of the partitions. We show a sharp regularity result for the entire set of minimizers for a natural relaxed version of the original problem, together with the regularity of eigenfunctions and a universal free boundary condition. Among others, our result covers the cases of the following functional costs
\[
	(\omega_1, \dots, \omega_m) \mapsto \sum_{i=1}^{m} \left( \sum_{j=1}^{k_i} \lambda_{j}(\omega_i)^{p_i}\right)^{1/p_i}, \quad  \prod_{i=1}^{m}  \left( \prod_{j=1}^{k_i} \lambda_{j}(\omega_i)\right), \quad  \prod_{i=1}^{m}  \left( \sum_{j=1}^{k_i} \lambda_{j}(\omega_i)\right)
\]
where $(\omega_1, \dots, \omega_m)$ are the sets of the partition and $\lambda_{j}(\omega_i)$ is the $j$-th Laplace eigenvalue of the set $\omega_i$ with zero Dirichlet boundary conditions.
\end{abstract}

\section{Introduction}

Let $\Omega \subset \R^N$ be a smooth bounded domain, $m\geq 2$ an integer and $k_1,\ldots, k_m\in \N$. Consider  the following optimal partition problem: among all $m$-tuples of open disjoint subsets $\omega_1, \dots, \omega_m$ of $\Omega$, belonging to an admissible class, find those that minimize the functional
\[
	(\omega_1, \dots, \omega_m) \mapsto F\left( \varphi_1\left(\lambda_1(\omega_1), \dots, \lambda_{k_1}(\omega_1)\right), \dots, \varphi_m\left(\lambda_1(\omega_m), \dots, \lambda_{k_m}(\omega_m)\right) \right) 
\]
where $\lambda_i(\omega)$ is the $i$-th eigenvalue of $\omega$ with Dirichlet boundary conditions. Here $F$ and $\varphi_i$ are given functions which satisfy certain monotonicity and coercivity assumptions. The aim of this paper is to show that not only problems of this form have a regular solution, but also that any solution is regular. Examples of functionals that fall in the scope of our results are
\begin{equation}\label{eq:firstexamples}
	(\omega_1, \dots, \omega_m) \mapsto \sum_{i=1}^{m} \left( \sum_{j=1}^{k_i} \lambda_{j}(\omega_i)^{p_i}\right)^{1/p_i}, \quad  \prod_{i=1}^{m}  \left( \prod_{j=1}^{k_i} \lambda_{j}(\omega_i)\right), \quad  \prod_{i=1}^{m}  \left( \sum_{j=1}^{k_i} \lambda_{j}(\omega_i)\right)
\end{equation}
and combinations of these functionals.

Optimal partition problems are a particular case of a shape optimization problem that appears quite naturally in engineering, where a cost functional defined on a structure made of several materials is being optimized (each material corresponds to a set of the partition).

 The problem of existence and regularity of optimal shapes for spectral costs (meaning cost functionals that depend on the spectrum of an operator set in a specific member of the partition) has been addressed by many authors.  They are connected with the study of nodal sets of eigenfunctions of Schr\"odinger operators \cite{BBHU,BKS, HHT,BHV}, monotonicity formulas \cite{ACF, CaffarelliLin2, ContiTerraciniVerziniOPP, HHOT2,TavaresTerracini1} and nonlinear systems of partial differential equations with strong competition terms \cite{CaffarelliLin2, clll,ctv2002, ContiTerraciniVerziniOptimalNonlinear, ContiTerraciniVerziniOPP, HHT, RamosTavaresTerracini, SoTe, TavaresTerracini2}. Moreover, these problems provide examples of monotone functionals which are lower-semicontinuous with respect to the weak $\gamma$-convergence, where existence results of a relaxed formulation (partitions of quasi-open sets) can be achieved by direct methods \cite{BucurButtazzoHenrot,BucurButtazzoBook}.  Alternative methods typically involve penalization arguments (see for instance \cite{BourdinBucurOudet,HHT, HHOT2, RamosTavaresTerracini,TavaresTerracini2}).

The main goal of this paper is to characterize and prove regularity of all possible partitions and their eigenvalues to problem \eqref{eqn prob}.

\subsection*{Open partitions} We contextualize our results by introducing a first natural formulation of the problem. For a given $m \geq 2$, consider the set of open partitions of $\Omega$ in $m$ disjoint subsets, denoted by
\[
    \Peh_m(\Omega)=\left\{(\omega_1,\ldots,\omega_m) : \ \omega_i \subset \Omega \text{ open }\forall i,\ \omega_i\cap \omega_j=\emptyset\ \forall i\neq j\right\}.    
\]
Observe that, according to this definition, a partition is not necessarily exhaustive, meaning that possibly $\cup_i \omega_i \subsetneqq \Omega$. To any element $\omega$ of a partition we associate the corresponding eigenvalues of the Laplacian with zero Dirichlet boundary condition $\lambda_1(\omega) \leq \lambda_2(\omega) \leq \dots$, counting multiplicity. It is well-known that these eigenvalues are the critical levels of the Rayleigh quotient
\[
	u \in H^1_0(\omega) \mapsto \left. \int_{\omega} |\nabla u|^2 \right/ \int_{\omega} u^2
\]
where $H^1_0(\omega)$ is the closure of the subset of $H^1(\Omega)$ whose support is contained in $\omega$. A characterization of eigenvalues, which takes naturally into account their multiplicity and is also better suited for our purpose, is given by the Courant-Fisher-Weyl formula, which states that for any $j \geq 1$ 
\[
	\lambda_j(\omega) = \mathop{\inf_{M\subset H^1_0(\omega)}}_{\dim M= j} \sup_{u\in M \setminus \{0\} }   \left( \left. \int_{\omega} |\nabla u|^2 \right/ \int_{\omega} u^2 \right)
\]
where $M$ is any linear subset of $H^1_0(\omega)$ of dimension $j$.

\subsection*{Cost functional} We introduce a general class of cost functional for the optimal partition problem. Let $F \in C^1(\R^m; \R)$ and, for any $i = 1, \dots, m$, $\varphi_i \in C^1((\R^+)^{k_i}; \R)$, functions that verify the following assumptions.
\begin{itemize}
\item[(H1)] Monotonicity: for every $i=1,\ldots,m$,
\[
    \begin{split}
    	\frac{\partial F}{\partial x_i}(x_1, \dots, x_m) > 0 \quad &\forall (x_1, \dots, x_m) \in (\R^+)^m,\\
    	 \quad \frac{\partial \varphi_i}{\partial x_j}(s_1, \dots, s_{k_i}) > 0 \quad &\forall (s_1, \dots, s_{k_i}) \in  (\R^+)^{k_i}, \, j \in 1, \dots, k_i;
    \end{split}
\]
\item[(H2)] Coercivity: for every $i=1,\ldots,m$, 
\[
    \begin{split}
        \lim_{t \to +\infty} F(x_1, \dots, x_{i-1}, t, x_{i+1}, \dots, x_m) = +\infty \quad & \forall (x_1, \dots, x_m) \in (\R^+)^m  \\
       \lim_{t \to +\infty} \varphi_{i}(s_1, \dots, s_{j-1}, t, x_{j+1}, \dots, s_{k_i}) = +\infty  \quad & \forall (s_1, \dots, s_{k_i}) \in (\R^+)^{k_i}, \, j \in 1, \dots, k_i;
    \end{split}
\]
\item[(H3)] Symmetry, for every $i=1,\ldots,m$ 
\[
    \varphi_i(\sigma(s_1, \dots, s_{k_i}))=\varphi_i(s_1, \dots, s_{k_i}) \qquad \text{for every permutation $\sigma\in S_{k_i}$.}
\]
\end{itemize}
We consider the following problem: among all partition $(\omega_1, \dots, \omega_m) \in \Peh_m(\Omega)$, find
\begin{equation}\label{eq:OPP_open}
    \inf_{(\omega_1,\ldots,\omega_m)\in \Peh_m(\Omega)} F\left(\varphi_1(\lambda_1(\omega_1),\ldots, \lambda_{k_1}(\omega_1)),\ldots, \varphi_m(\lambda_1(\omega_m),\ldots, \lambda_{k_m}(\omega_m))\right).
\end{equation}
The goal here is to show that a solution, an optimal partition, exists and also to establish some of its qualitative properties, such as the regularity of the associated eigenfunctions, topological properties of the partitions and the structure of their boundary.

Although this first formulation has a very natural appeal, it comes with an apparent incompatibility between the structure of the set of solutions $\Peh_m(\Omega)$ and the minimization problem. Indeed it does not seem easy to endow the set of the open partitions $\Peh_m(\Omega)$ with a topology that allows any compactness results on sequences of minimizers of the cost functional. There are many ways to circumvent this issue (see for instance \cite{BourdinBucurOudet, BucurButtazzoHenrot, HHT}), usually by considering a relaxed version of the original problem.

\subsection*{Measurable partitions} We adopt here the framework of \cite{HHT}, see also \cite{dePhilippisetal}, in that we reformulate our problem in the context of measurable sets. For this reason we extend our notion of partition and consider the set of measurable partitions of $\Omega$ in $m$ almost-disjoint subsets, denoted by
\[
    \widetilde \Peh_m(\Omega)=\left\{(\omega_1,\ldots,\omega_m):\ \omega_i \subset \Omega \text{ measurable }\forall i,\ |\omega_i\cap \omega_j|=0\ \forall i\neq j\right\},
\]
where $|\cdot|$ is the Lebesgue measure. Correspondingly, for any $\omega \subset \R^N$ measurable (with non-empty interior) we define the Sobolev-like set
\[
    \widetilde{H}^1_0(\omega) := \left\{u \in H^1(\Omega) : u = 0 \text{ a.e. in } \Omega \setminus \omega  \right\}
\]
and we introduce the generalized eigenvalues of $\omega$ as
\[
    \widetilde{\lambda}_j(\omega) := \mathop{\inf_{M\subset \widetilde H^1_0(\omega)}}_{\dim M= j} \sup_{u\in M \setminus \{0\} } \left( \left. \int_\omega |\nabla u|^2 \right/ \int_\omega u^2 \right).
\]
They form a nondecreasing sequence which is associated to an $L^2$--orthonormal sequence of eigenfunctions $\{\phi_j\}_{j\in \N}$, which satisfy $-\Delta \phi_j= \widetilde \lambda_j(\omega)  \phi_j $ in the weak sense
\[
\int_\Omega \nabla \phi \cdot \nabla \eta =  \widetilde \lambda_j(\omega)  \int_\Omega\widetilde \phi_j \eta \qquad \forall \eta\in \widetilde{H}^1_0(\omega)
\]
and belong to $L^\infty(\Omega)$ (see \cite[Section 2]{dePhilippisetal}).
\begin{remark*}
The notions of classical eigenvalue $\lambda_k$ and generalized eigenvalue $\widetilde{\lambda}_k$ differ in general, even for Lipschitz sets. Indeed, there are open sets $\Omega \subset \R^N$, such that $\lambda_k(\Omega) \neq \widetilde{\lambda}_k(\Omega)$ for some $k$ (in general we have $\widetilde{\lambda}_k(\Omega) \leq \lambda_k(\Omega)$). Taking for instance $\Omega = B_1(0) \setminus \{x_1 = 0\}$, then one easily verifies that $\lambda_1(\Omega) = \lambda_2(\Omega) = \lambda_2(B_1(0))$, while $\widetilde{\lambda}_k(\Omega) = \lambda_k(B_1(0))$ for any $k \in \N$. On the other hand, if $\Omega$ has smooth boundary (for instance, $\Omega$ enjoys an exterior cone condition), then the two notions coincide. See \cite{dePhilippisetal} for a more in depth discussion on this subject.
\end{remark*}

We can finally introduce a suitable relaxed formulation of the minimization problem: among all partition $(\omega_1, \dots, \omega_m) \in \widetilde{\Peh}_m(\Omega)$, find 
\begin{equation}\label{eqn prob}
    \inf_{(\omega_1,\ldots,\omega_m)\in \widetilde{\Peh}_m(\Omega)} F\left(\varphi_1 \left(\widetilde{\lambda}_1(\omega_1),\ldots, \widetilde{\lambda}_{k_1}(\omega_1)\right),\ldots, \varphi_m\left(\widetilde{\lambda}_1(\omega_m),\ldots, \widetilde{\lambda}_{k_m}(\omega_m)\right)\right).
\end{equation}

We state a general existence theorem for the solutions of this problem
\begin{theorem*}[\cite{RamosTavaresTerracini}]
The optimal partition problem \eqref{eqn prob} coincides with \eqref{eq:OPP_open} and admits an open regular solution $(\omega_{1},\ldots,\omega_{m})\in \Peh_m(\Omega)$ This partition is that $\widetilde \lambda_j(\omega_i)=\lambda_j(\omega_i)$ for every $i=1,\ldots, m$, $j=1,\ldots, k_i$. Moreover, there exist

Moreover, for all $i = 1, \dots, m$ there exist $k_i$ linearly independent eigenfunctions $u_{i,1},\ldots, u_{i,k_i} \in \widetilde{H}^1_0(\omega_i)$ associated to $\widetilde{\lambda}_1(\omega_i), \ldots, \widetilde{\lambda}_{k_i}(\omega_i)$ which are Lispschitz continuous, and $O_i$ coincides with the interior of the support of 
\[
\sum_{j=1}^{k_i} |u_{i,j}|.
\]
Finally, for each $i=1,\ldots, m$ and $j = 1, \dots, k_i$ there exists $a_{i,j}>0$ such that given $x_0$ in the regular part between the interface between two adjacent sets $O_p, O_q$ of the partition, the free boundary condition is 
\[
\mathop{\lim_{x\to x_0}}_{x\in  O_p} \sum_{j=1}^{k_p} a_{p,j} |\nabla u_{p,j}(x)|^2=\mathop{\lim_{x\to x_0}}_{x\in  O_q} \sum_{j=1}^{k_q} a_{q,j} |\nabla u_{q,j}(x)|^2 \neq 0.
\]
\end{theorem*}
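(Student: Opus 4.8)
The plan is to follow the three-stage strategy that is by now standard for spectral optimal partition problems. First I would obtain a minimizer of \eqref{eqn prob} in the relaxed class by the direct method, after recasting the cost as a minimization over tuples of finite dimensional subspaces. Then I would regularize the problem by a penalization that replaces the segregation constraint by a large $L^2$-penalty on the products of the eigenfunctions, prove a Lipschitz bound for the penalized minimizers \emph{uniform} in the penalization parameter, and pass to the limit. Finally I would run a blow-up analysis on the common boundaries of the $\omega_i$ to describe their regular part, upgrade the measurable partition to an open regular one, and read off the free boundary condition.

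\emph{Existence.} By the Courant--Fischer--Weyl formula and the monotonicity (H1), for every measurable $\omega$ with non-empty interior one has
\[
	\varphi_i\!\left(\widetilde\lambda_1(\omega),\dots,\widetilde\lambda_{k_i}(\omega)\right)=\min\Big\{\varphi_i\big(\mu_1(M),\dots,\mu_{k_i}(M)\big):\ M\subset\widetilde H^1_0(\omega),\ \dim M=k_i\Big\},
\]
where $\mu_1(M)\le\dots\le\mu_{k_i}(M)$ are the values of the Rayleigh quotient $\int_\Omega|\nabla\cdot|^2/\int_\Omega(\cdot)^2$ on $M$ (diagonalized), the minimum being attained on the span $M_0$ of the first $k_i$ eigenfunctions, so that $\mu_j(M_0)=\widetilde\lambda_j(\omega)$. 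Hence \eqref{eqn prob} is equivalent to minimizing $F\big(\varphi_1(\mu_\bullet(M_1)),\dots,\varphi_m(\mu_\bullet(M_m))\big)$ over $m$-tuples of subspaces $M_i\subset H^1_0(\Omega)$, $\dim M_i=k_i$, whose elements have pairwise (in $i$) disjoint supports. I would take a minimizing sequence and, for each $i$ and $n$, an $L^2$-orthonormal basis $u^n_{i,1},\dots,u^n_{i,k_i}$ of $M^n_i$: the coercivity (H2) of $F$ and of each $\varphi_i$ forces the Rayleigh values $\mu_j(M^n_i)=\int_\Omega|\nabla u^n_{i,j}|^2$ to stay bounded, so the $u^n_{i,j}$ are bounded in $H^1_0(\Omega)$. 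Along weak $H^1$ and strong $L^2$ limits $u_{i,j}$, the strong $L^2$ convergence preserves $\int_\Omega u_{i,j}u_{i,l}=\delta_{jl}$ (hence the dimensions) and $\int_\Omega|u_{i,j}||u_{p,l}|=0$ for $i\ne p$, so $\big(\mathrm{int}(\supp\sum_j|u_{i,j}|)\big)_{i}\in\widetilde\Peh_m(\Omega)$ is admissible; since each $M\mapsto\mu_j(M)$ is lower semicontinuous along this convergence (optimal $j$-dimensional subspaces of the $M^n_i$ subconverge to subspaces of $M_i$ with no larger Rayleigh quotient, by weak lower semicontinuity of the Dirichlet energy and $L^2$-convergence of the normalizations) and $F,\varphi_i$ are continuous and increasing, this competitor $(\omega_1,\dots,\omega_m)$ minimizes \eqref{eqn prob}.

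\emph{Lipschitz regularity and open partitions.} For $\eps>0$ I would minimize
\[
	\mathcal F_\eps(\mathbf u)=F\Big(\varphi_1\big(\mu_\bullet(\mathbf u^1)\big),\dots,\varphi_m\big(\mu_\bullet(\mathbf u^m)\big)\Big)+\frac1\eps\sum_{i\ne p}\int_\Omega\Big(\sum_{j}u_{i,j}^2\Big)\Big(\sum_{l}u_{p,l}^2\Big)
\]
over tuples $\mathbf u=(u_{i,j})$ with $\int_\Omega u_{i,j}u_{i,l}=\delta_{jl}$, $\mathbf u^i=(u_{i,1},\dots,u_{i,k_i})$. A minimizer $\mathbf u^\eps$ exists by the argument above, and at it each $u^\eps_{i,j}$ may be taken to be the $j$-th eigenfunction of $-\Delta+\tfrac1\eps V^\eps_i$ on $H^1_0(\Omega)$, with $V^\eps_i=\sum_{p\ne i}\sum_l(u^\eps_{p,l})^2$; thus $\mathbf u^\eps$ solves the strongly competitive elliptic system $-\Delta u^\eps_{i,j}=\mu^\eps_{i,j}u^\eps_{i,j}-\tfrac1\eps u^\eps_{i,j}\sum_{p\ne i}\sum_l(u^\eps_{p,l})^2$, with $\mu^\eps_{i,j}\to\widetilde\lambda_j(\omega_i)$ as $\eps\to0$. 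The crucial point is a Lipschitz bound for $\mathbf u^\eps$ \emph{uniform in} $\eps$; this I would obtain from the now well-developed regularity theory for strongly competing systems --- Almgren-type frequency functions, Alt--Caffarelli--Friedman monotonicity, and a compactness-and-contradiction argument rescaling a hypothetical sequence with exploding Lipschitz seminorm to an entire sublinear segregated configuration, which must be trivial. Since any genuinely segregated tuple has vanishing penalty one has $\inf\mathcal F_\eps\le\inf\eqref{eqn prob}$, while the penalty forces $\mathbf u^\eps$ to segregate in the limit; hence a limit $\mathbf u=(u_{i,j})$ of a subsequence is a minimizer of \eqref{eqn prob} whose eigenfunctions are locally Lipschitz and satisfy $\big(\sum_j|u_{i,j}|\big)\big(\sum_l|u_{p,l}|\big)\equiv0$ for $i\ne p$. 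Interior elliptic regularity makes $\omega_i:=\mathrm{int}(\supp\sum_j|u_{i,j}|)$ open, and the Lipschitz bound with non-degeneracy gives an exterior cone/density condition on $\partial\omega_i$, so $\widetilde\lambda_j(\omega_i)=\lambda_j(\omega_i)$; combined with $\widetilde\lambda_\bullet\le\lambda_\bullet$ and the monotonicity of $F,\varphi_i$ this yields $\inf\eqref{eqn prob}=\inf\eqref{eq:OPP_open}$, so the problems coincide and the minimizer is an open regular partition.

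\emph{Free boundary condition.} Put $a_{i,j}:=\frac{\partial F}{\partial x_i}\frac{\partial\varphi_i}{\partial x_j}$, evaluated at the optimal eigenvalues: this is $>0$ by (H1), and by the symmetry (H3) the sums $\sum_j a_{i,j}|\nabla u_{i,j}|^2$ do not depend on the chosen eigenbasis. Deforming the partition along a vector field $X$ that preserves admissibility, the Hadamard variation formula for Dirichlet eigenvalues (multiple eigenvalues being handled through directional derivatives) turns optimality of $(\omega_1,\dots,\omega_m)$ into the Euler--Lagrange identity $\sum_i\sum_j a_{i,j}\int_{\partial\omega_i}|\partial_\nu u_{i,j}|^2\,X\!\cdot\!\nu=0$. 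To convert this into the transmission condition I would blow up the tuple $(a_{i,j}^{1/2}u_{i,j})$ at a point $x_0\in\partial\omega_p\cap\partial\omega_q$: the monotonicity formula forces blow-up limits to be homogeneous segregated profiles, and a Federer-type stratification shows that, outside a closed set of Hausdorff dimension $\le N-2$ (empty if $N=1$), the blow-up is the one-dimensional two-phase profile $x\mapsto\big(\alpha(x\!\cdot\!\nu)^+,\beta(x\!\cdot\!\nu)^-,0,\dots\big)$; hence $\partial\omega_p\cap\partial\omega_q$ is locally a $C^{1,\alpha}$ hypersurface near $x_0$ (smooth after bootstrapping the two-phase transmission problem), and matching the two one-sided blow-ups across it gives precisely
\[
	\mathop{\lim_{x\to x_0}}_{x\in\omega_p}\sum_{j=1}^{k_p}a_{p,j}|\nabla u_{p,j}(x)|^2=\mathop{\lim_{x\to x_0}}_{x\in\omega_q}\sum_{j=1}^{k_q}a_{q,j}|\nabla u_{q,j}(x)|^2,
\]
the common value being nonzero since a vanishing flux would force the blow-up to vanish identically, against $x_0\in\partial\omega_p\cap\partial\omega_q$. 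The two steps I expect to be genuinely hard are the \emph{uniform-in-$\eps$ Lipschitz estimate} for the competitive system and the \emph{classification of the blow-up profiles together with the dimension bound on the singular set}; the existence argument and the upgrade from measurable to open partitions are comparatively soft.
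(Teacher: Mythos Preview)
This theorem is not proved in the present paper; it is quoted from \cite{RamosTavaresTerracini} as background for the main result (Theorem~\ref{thm:main_result}). That said, the proof of Theorem~\ref{thm:main_result} in Sections~2--3 is a refinement of the \cite{RamosTavaresTerracini} scheme (with an extra projection term to pin down a \emph{given} minimizer), so a comparison is still meaningful.

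Your three-stage plan --- penalize the segregation constraint, prove uniform regularity for the competitive system, pass to the limit, then analyze the free boundary by blow-up --- is indeed the strategy of \cite{RamosTavaresTerracini} and of this paper. The existence step and the identification $a_{i,j}=\partial_iF\cdot\partial_j\varphi_i$ are fine. Two points, however, do not match the actual argument and would not go through as written.

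\textbf{Uniform Lipschitz versus uniform H\"older.} You aim for a Lipschitz bound on the penalized minimizers uniform in $\eps$. This is not what is proved, and is not known to hold: what one obtains (Proposition~\ref{prop:uniform_Holder_bounds} here, and its analogue in \cite{RamosTavaresTerracini}) is a uniform $C^{0,\alpha}$ bound for every $\alpha<1$, via a contradiction/blow-up argument that reduces to Liouville-type theorems for entire segregated or subharmonic profiles. Lipschitz regularity is established only \emph{after} passing to the limit, for the segregated configuration, using the Almgren frequency and the local Pohozaev identities satisfied by the limit. Since the Lipschitz seminorm of the approximants may well blow up as $\eps\to0$, aiming for uniform Lipschitz is the wrong target; uniform H\"older is both sufficient and what the argument actually delivers.

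\textbf{Free boundary condition via Hadamard versus via Pohozaev.} Your derivation applies the Hadamard domain-variation formula to the optimal partition and then blows up. The Hadamard step presupposes enough regularity of $\partial\omega_i$ to make the boundary integrals and the normal well defined --- precisely what has to be proved. In \cite{RamosTavaresTerracini} and here (Proposition~\ref{prop:limit_in_p}(3)) the route is different: one writes Pohozaev-type identities for the \emph{smooth} approximants $(\mathbf u_\beta,\mathbf v_\beta)$, shows that the interaction term vanishes in the limit (eq.~\eqref{eq:interactionterm_to0}), and obtains a local Pohozaev identity for the segregated limit with no a priori boundary regularity needed. The blow-up analysis then uses this identity (together with Almgren monotonicity) to classify blow-ups, get the $C^{1,\alpha}$ regular part and the $(N-2)$-dimensional singular set, and read off the transmission condition \eqref{eq:extremality_main} simultaneously. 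So your blow-up/stratification picture is right, but the transmission identity should come from the approximating Pohozaev identities rather than from a direct Hadamard variation of the limit problem.
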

For the notion of regular partition, we refer to the next statement. This statement is a combination of \cite[Theorem 1.2]{RamosTavaresTerracini} and the paragraphs after that, see in particular the relaxed formulation (2.4) therein.  It should be noted that the case of functionals depending only with first eigenvalues was treated in \cite{Alper, CaffarelliLin2, CaffLinStratification, ContiTerraciniVerziniOPP}, while \cite{TavaresTerracini2} deals with second eigenvalues.

\subsection*{Main results} In this paper we strengthen the previous result, by showing that every solution of \eqref{eqn prob} is equivalent to a regular partition, together with universal results regarding the regularity of eigenfunctions and a free boundary condition. In what follows, $\triangle$ denotes the symmetric difference between two sets.

\begin{theorem}\label{thm:main_result}
Let $ \omega:=( \omega_1,\ldots,  \omega_m)\in \widetilde \Peh_m(\Omega)$ be any minimizer of \eqref{eqn prob}.  Then there exists a unique open partition $O = (O_1, \ldots, O_m) \in \Peh_m(\Omega)$ such that the following holds.

\noindent\emph{Equivalence:}
\begin{itemize}
	\item subsets coincide up to negligible sets,  $|\omega_i \triangle O_i| = 0$ for all $i = 1, \dots, m$;
	\item they share the same eigenvalues, 
	\[
		\widetilde{\lambda}_j(\omega_i) = \lambda_j(O_i) \qquad \text{for all $i = 1, \dots, m$ and $j = 1, \dots, k_i$;}
	\]
	\item they share the same eigenfunctions, for all $i = 1, \dots, m$ there exist $k_i$ linearly independent eigenfunctions $\phi_{i,1},\ldots, \phi_{i,k_i} \in \widetilde{H}^1_0(\omega_i)$ associated to $\widetilde{\lambda}_1(\omega_i), \ldots, \widetilde{\lambda}_{k_i}(\omega_i)$ and $k_i$ linearly independent eigenfunctions $u_{i,1},\ldots,u_{i,k_i} \in H^1_0(O_i)$ associated to $\lambda_1(O_i), \ldots, \lambda_{k_i}(O_i)$ such that, for any $j \in 1, \dots, k_i$, we have
	\[
		\phi_{i,j} = u_{i,j} \qquad \text{quasi-everywhere in $\Omega$}.
	\]
\end{itemize}

\noindent\emph{Regularity of the sets:}
the partition $O$ is regular, in the sense that the free-boundary $\Gamma = \Omega \setminus \bigcup_{i=1}^{m} O_i$ is a rectifiable set and there exist disjoint sets $\Reh, \Sigma \subset \Gamma$ such that 
\begin{itemize}
	\item $\Gamma = \Reh \cup \Sigma$ has Hausdorff dimension at most $N-1$: $\Hh_\text{dim}(\Gamma)\leq N-1$;
	\item $\Reh$ is relatively open and $\Sigma$ is relatively close in $\Gamma$;
   	\item $\Reh$ is a collection of  hypersurfaces of class $C^{1,\alpha}$ (for some $0<\alpha<1$). Moreover, each hypersurface separates locally exactly two different elements of the partition: for every $x_0\in \Reh$, there exists $\rho>0$ and exactly two indices $i\neq j$ such that $x_0\in \partial O_i \cap \partial O_j$, $B_\rho(x_0)\setminus \Gamma= B_\eps(x_0) \cap ( O_i\cup O_j )$.

  	\item $\Sigma$ is small in the sense that $\Hh_\text{dim} (\Sigma)\leq N-2$;
	\item if $N=2$, the set $\Sigma$ is a locally finite set and $\Reh$ consists of a locally finite collection of curves meeting at singular points.
\end{itemize}	 

\noindent\emph{Spectral gap:}
\begin{itemize}
\item for each $i=1,\ldots, m$ it holds
\[
    \widetilde{\lambda}_{k_i}(\omega_i) < \widetilde{\lambda}_{k_i+1}(\omega_i).
\]
In particular, if $\widetilde E_{i,j}(\omega_i) \subset \widetilde H^1_0(\omega_i)$ denotes the eigenspace associated to $\widetilde \lambda_j(\omega_i)$, then the dimension of the linear space 
$E_{k_i}:=\mathrm{span}\left( \cup_{j=1}^{k_i} \widetilde E_{i,j}\right)$ is equal to $k_i$.
\end{itemize}

\noindent\emph{Regularity of the eigenfunctions:}
\begin{itemize}
\item for $i=1,\ldots, m$, we have
\[
E_{k_i} \subset Lip(\overline \Omega),
\] 
in the sense that each eigenfunction has a continuous representative.
\end{itemize}
Now, for $i=1,\ldots, m$, let $\phi_{i,1},\ldots, \phi_{i,k_{i}}$ be an $L^2$-orthonormal base of $E_{k_i}$, associated respectively to the eigenvalues $\widetilde \lambda_1(\omega_i)\leq \ldots\leq \widetilde \lambda_{k_i}(\omega_i)$. Then 
\begin{itemize}
\item for each $i=1,\ldots, m$, $O_i$ is the interior of the support of 
\[
	\sum_{j=1}^{k_i} |\phi_{i,j}|;
\]
\item there exists $a_{i,j}>0$ such that given $x_0 \in \Reh$ and $O_p, O_q$ the two adjacent sets of the partition at $x_0$, then
\begin{equation}\label{eq:extremality_main}
\mathop{\lim_{x\to x_0}}_{x\in  O_p} \sum_{n=1}^{k_p} a_{p,j} |\nabla \phi_{p,j}(x)|^2=\mathop{\lim_{x\to x_0}}_{x\in  O_q} \sum_{n=1}^{k_q} a_{q,j} |\nabla \phi_{q,j}(x)|^2 \neq 0.
\end{equation}
The coefficients depend only on the eigenvalues of the optimal partition, through the formula
\begin{multline*}
a_{i,j}=\partial_i F\left(\varphi_1 \left(\widetilde{\lambda}_1(\omega_1),\dots, \widetilde{\lambda}_{k_1}(\omega_1)\right),\dots, \varphi_m\left(\widetilde{\lambda}_1(\omega_m),\dots, \widetilde{\lambda}_{k_m}(\omega_m)\right)\right)\partial_j \varphi_i(\widetilde \lambda_1(\omega_1),\dots, \widetilde \lambda_k(\omega_1)),
\end{multline*}
and $a_{i,m}=a_{i,n}$ if $\widetilde \lambda_m(\omega_i)=\widetilde \lambda_n(\omega_i)$.
\end{itemize}
\end{theorem}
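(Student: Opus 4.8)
The plan is to avoid the penalization/approximation scheme that produces the \emph{particular} regular minimizer of \cite{RamosTavaresTerracini} and instead to work directly with an \emph{arbitrary} minimizer $\omega=(\omega_1,\dots,\omega_m)$ of \eqref{eqn prob}, deriving its extremality conditions by hand and then feeding them into a free-boundary regularity analysis. First I would record the elementary properties: by (H2) applied to the $\varphi_i$ each $\omega_i$ must carry a $k_i$-dimensional subspace of $\widetilde H^1_0(\omega_i)$, so $|\omega_i|>0$; the generalized eigenfunctions $\phi_{i,1},\dots,\phi_{i,k_i}$ are bounded and solve the weak eigenvalue equations; and, combining domain monotonicity of generalized eigenvalues with the \emph{strict} monotonicity (H1) of $F$ and of the $\varphi_i$, the minimizer is exhaustive, $|\Omega\setminus\bigcup_i\omega_i|=0$, and no $\omega_i$ can be enlarged inside the partition without strictly lowering some $\widetilde\lambda_j(\omega_i)$, $j\le k_i$.

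The heart of the proof is the derivation of the \emph{extremality conditions}. I would perturb $\omega$ by inner variations $\omega_i\mapsto(\mathrm{Id}+t\xi)(\omega_i)$ with $\xi\in C_c^\infty(\Omega;\R^N)$ — which keeps the configuration in $\widetilde{\Peh}_m(\Omega)$ for $|t|$ small — and differentiate the cost at $t=0$. This is where the symmetry assumption (H3) is essential: although $t\mapsto\widetilde\lambda_j(\omega_i^t)$ need not be differentiable across an eigenvalue crossing, (H3) forces $\partial_j\varphi_i$ to be constant along each multiplicity cluster (this is exactly the assertion $a_{i,m}=a_{i,n}$ when $\widetilde\lambda_m(\omega_i)=\widetilde\lambda_n(\omega_i)$), so the clustered sum $\sum_{j\ \text{in the cluster}}\partial_j\varphi_i\cdot(\text{first variation of }\widetilde\lambda_j)$ equals the common weight times the trace of the Rellich variation matrix and may therefore be computed on \emph{any} $L^2$-orthonormal basis of the cluster's eigenspace. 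Setting $a_{i,j}=\partial_iF(\cdots)\,\partial_j\varphi_i(\cdots)>0$ as in the statement, this yields the distributional identity
\[
\sum_{i=1}^m\sum_{j=1}^{k_i}a_{i,j}\int_\Omega\left(|\nabla\phi_{i,j}|^2\,\mathrm{div}\,\xi-2\,(D\xi\,\nabla\phi_{i,j})\cdot\nabla\phi_{i,j}-\widetilde\lambda_j(\omega_i)\,\phi_{i,j}^2\,\mathrm{div}\,\xi\right)dx=0
\]
for all such $\xi$, valid for the \emph{a priori only $H^1$} eigenfunctions. Morally this says that $\omega$ minimizes, up to lower-order terms and locally around itself, the \emph{linear} weighted spectral cost $\sum_{i,j}a_{i,j}\widetilde\lambda_j(\cdot)$; this is the reduction that brings the general nonlinear problem within reach of the tools built for linear spectral costs.

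From here I would follow the by-now well-developed route for strongly competing systems. Writing $\bm{\Phi}_i=(\sqrt{a_{i,1}}\,\phi_{i,1},\dots,\sqrt{a_{i,k_i}}\,\phi_{i,k_i})$, these vectors solve linear systems in $\omega_i$ (perturbed by the zero-order eigenvalue terms) and the extremality identity supplies the missing free-boundary information. Then: (i) prove Almgren-- and Alt--Caffarelli--Friedman-type monotonicity formulas for $\{\bm{\Phi}_i\}_i$ across the common free boundary, giving doubling, compactness of blow-ups, and a classification of blow-up limits into homogeneous one-phase and two-phase profiles; (ii) deduce uniform gradient bounds, hence $E_{k_i}\subset\mathrm{Lip}(\overline\Omega)$; (iii) set $O_i:=\mathrm{int}\,\supp(\sum_j|\phi_{i,j}|)$ and combine the Lipschitz bound with the nondegeneracy from the monotonicity formula to show that $O_i$ is open, $|\omega_i\triangle O_i|=0$, $O_i$ is regular enough that $H^1_0(O_i)=\widetilde H^1_0(O_i)$, and consequently $\widetilde\lambda_j(\omega_i)=\lambda_j(O_i)$ with the generalized eigenfunctions of $\omega_i$ coinciding quasi-everywhere with genuine eigenfunctions of $O_i$; (iv) split $\Gamma=\Omega\setminus\bigcup_iO_i$ into the relatively open set $\Reh$ where some blow-up is a two-phase profile and the relatively closed remainder $\Sigma$, upgrade $\Reh$ to a $C^{1,\alpha}$ hypersurface separating exactly two elements via an improvement-of-flatness/epiperimetric argument, pass the two one-sided blow-up limits to obtain the universal condition \eqref{eq:extremality_main} with the stated coefficients (the common limit nonzero by nondegeneracy), and run a Federer dimension reduction to get $\Hh_{\mathrm{dim}}(\Sigma)\le N-2$, with $\Sigma$ locally finite when $N=2$. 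The spectral gap $\widetilde\lambda_{k_i}(\omega_i)<\widetilde\lambda_{k_i+1}(\omega_i)$ — which then gives $\dim E_{k_i}=k_i$ — needs a dedicated, and delicate, argument (it is in fact already required to make the clustered inner-variation formula unambiguous), ruling out that the top eigenvalue cluster of an optimal $\omega_i$ spills past the cutoff $k_i$ by exhibiting a strictly better competitor obtained from reallocating a redundant portion of $\omega_i$ to an adjacent element. Uniqueness of $O$ is then immediate, since any admissible open partition equivalent to $\omega$ must agree with the interior of the support of the eigenfunctions.

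The main obstacle is the combination packed into the second step: obtaining the extremality identity for a \emph{merely measurable} minimizer, where classical shape-derivative formulas are unavailable and everything must be argued at the $H^1$/distributional level while simultaneously controlling eigenvalue multiplicities through (H3), and then propagating this information — via monotonicity formulas tailored to the multi-eigenvalue, nonlinear setting — all the way to Lipschitz regularity of the eigenfunctions and $C^{1,\alpha}$ regularity of the interface, keeping the weights $a_{i,j}$ under control along the blow-up. A secondary but genuine difficulty is the spectral-gap statement together with the identity $\widetilde\lambda_j(\omega_i)=\lambda_j(O_i)$, i.e.\ showing that relaxing to measurable partitions is never strictly advantageous.
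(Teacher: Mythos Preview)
Your approach is genuinely different from the paper's, and the comparison is instructive. The paper does \emph{not} derive extremality conditions directly from the measurable minimizer. Instead it runs a penalization: for each $\beta>0$ it minimizes a functional $E_\beta$ on $L^2$-orthonormal $k$-frames that contains (i) the cost $F\circ\varphi$ applied to Gram-type matrices, (ii) a competition term $\frac{\beta}{q}\int(\sum u_i^2)^q(\sum v_i^2)^q$ relaxing disjointness, and --- this is the new idea relative to \cite{RamosTavaresTerracini} --- (iii) projection penalties $\|P^\perp u_i\|_{L^2}^2$, $\|Q^\perp v_i\|_{L^2}^2$ onto the orthogonal complement of the span of the \emph{fixed} eigenfunctions $\boldsymbol{\phi},\boldsymbol{\psi}$ of the given minimizer. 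Minimizers of $E_\beta$ are smooth, so Pohozaev identities and blow-up estimates are rigorous; uniform H\"older bounds are proved by contradiction/blow-up; and as $\beta\to\infty$ the limits $(\mathbf u,\mathbf v)$ are forced, via a chain of inequalities collapsing to equalities, to satisfy $P^\perp u_i=Q^\perp v_i=0$, i.e.\ $\mathbf u=M\boldsymbol\phi$, $\mathbf v=N\boldsymbol\psi$ for orthogonal $M,N$. This is what pins down the \emph{specific} minimizer rather than merely some minimizer, after which the regularity machinery of \cite{RamosTavaresTerracini} applies verbatim. The spectral gap is proved \emph{last}, using the already-established reflection law: varying the choice of basis inside a putatively overlarge top cluster forces $|\nabla\phi_i|^2=|\nabla\phi_j|^2$ on $\Reh$, and a connectedness/Hopf argument gives the contradiction.

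Your direct route is attractive but has a structural gap exactly where you flag it as ``delicate'': the circularity between the spectral gap and the extremality identity. If the cluster containing $\widetilde\lambda_{k_i}(\omega_i)$ genuinely spills past $k_i$, the cost depends on only part of a degenerate eigengroup, and the map $t\mapsto F(\dots,\varphi_i(\widetilde\lambda_1(\omega_i^t),\dots,\widetilde\lambda_{k_i}(\omega_i^t)),\dots)$ is in general only one-sided differentiable; (H3) symmetrizes within the first $k_i$ slots but does \emph{not} symmetrize across the cutoff, so your clustered Rellich computation is not well-posed without the gap. Your proposed fix --- a competitor obtained by ``reallocating a redundant portion of $\omega_i$'' --- is not a proof: for higher eigenvalues there is no monotone surgery that lowers $\widetilde\lambda_{k_i}$ by shrinking $\omega_i$, and absent any regularity of $\partial\omega_i$ you cannot locate a piece whose removal is harmless. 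The paper sidesteps this entirely because the penalized problem needs no spectral gap, and the gap is recovered only after regularity and the free-boundary condition are in hand. A second, lesser concern: the Almgren/ACF formulas you invoke for the segregated $H^1$ vectors $\bm\Phi_i$ require a Pohozaev-type identity \emph{across} the free boundary, which for a merely measurable partition you have not justified; the paper obtains it by passing to the limit from the smooth $E_\beta$-minimizers.
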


The proof of Theorem \ref{thm:main_result} is based on a penalization argument. We exploit a regularized version of the relaxed formulation \eqref{eqn prob}, involving eigenfunctions rather than eigenvalues, that is better suited to prove the aforementioned properties of optimal sets. Following \cite{RamosTavaresTerracini}, we consider a singular perturbation and  approximate these eigenfunctions by minimal solutions of a nonlinear elliptic system with competition terms. This allows to prove the regularity results concerning eigenfunctions and interfaces. By adding an extra term in the energy functional we are able to select any specific minimizer of which we wish to show regularity.

It should be noted that the previous result in the case of functionals depending on first eigenvalues was proved in \cite{HHT}. The case of higher eigenvalues presents many extra difficulties which are related to the unknown multiplicity of the eigenvalues of an optimal partition and to the fact that some eigenfunctions change sign.

\subsection*{Examples} Before presenting the proof of our result, we illustrate a couple of concrete applications for specific choices of cost functionals. As a model case, we consider the first function in \eqref{eq:firstexamples}, that is the case of 
\[
	F(x_1, \dots, x_m) = \sum_{i=1}^m x_i \quad \text{and} \quad \varphi_i(s_1, \dots, s_{k_i}) = \left( \sum_{i=1}^{k_i} s_j^{p_i}\right)^{\frac{1}{p_i}}
\]
with $p_i > 0$. Then our theory applies to all minimizer of 
\begin{equation*}
    \inf_{(\omega_1,\ldots,\omega_m)\in \widetilde \Peh_m(\Omega)} \sum_{i=1}^{m} \left( \sum_{j=1}^{k_i} \widetilde \lambda_{j}(\omega_i)^{p_i}\right)^{1/p_i},
\end{equation*}
which are the shown to be regular in the sense of Theorem \ref{thm:main_result}. Moreover, the coefficient in \eqref{eq:extremality_main} are given in this case by
\[
	a_{i,j} := \frac{\widetilde \lambda_n(\omega_i)^{p_i-1}}{\left(\sum_{j=1}^{k_i}\widetilde \lambda_{j}(\omega_1)^{p_i}\right)^\frac{p_i-1}{p_i}}.
\]
The same results also holds for the (suitably renormalized) limit case $p_i \to 0$, where we find
\[
    \varphi'_i(s_1, \dots, s_{k_i}) = \prod_{i=1}^{k_i} s_j, \quad \text{and} \quad a_{i,j}' := \prod_{j=1, j \neq i}^{k_i}\widetilde \lambda_{j}(\omega_1).
\]

\subsection*{A remark about quasi-open sets}
In the theory of optimal partitions with respect to spectral costs we can find another class of partitions, given by quasi-open sets, which is in a sense intermediate between the class of open partitions and the class of measurable partitions. It is defined by
\[
\widehat \Peh_m(\Omega)=\left\{(\omega_1,\ldots,\omega_m):\ \omega_i \subset \Omega \text{ quasi-open }\forall i,\ \textrm{cap}(\omega_i\cap \omega_j)=0\ \forall i\neq j\right\},
\]
with associated problem
\[
    \inf_{(\omega_1,\ldots,\omega_m)\in \widehat \Peh_m(\Omega)} F\left(\varphi_1(\lambda_1(\omega_1),\ldots, \lambda_{k_1}(\omega_1)),\ldots, \varphi_m(\lambda_1(\omega_m),\ldots, \lambda_{k_m}(\omega_m))\right).
\]
We recall briefly the notions of capacity and of quasi-open sets, taken from \cite[Chapter~4]{BucurButtazzoBook}. The (2-)capacity of a set is
\[
\textrm{cap}(A)=\inf \left\{ \int_\Omega (|\nabla u|^2+u^2):\ u\in H^1(\R^N),\ u\equiv 1 \text{ in a neighborhood of } A \right\}.
 \]
 A set $A$ is said to be quasi-open if for each $\eps>0$ there is an open set $A_\eps$ satisfying $\textrm{cap}(A \triangle A_\eps)<\eps$, where $\triangle$ denotes the symmetric difference between sets. There is a close relation between quasi-open sets and Sobolev functions. In fact, each $u\in H^1(\R^N)$ admits a quasi-continuous representative, this meaning that for each $\eps>0$ there is a continuous function $u_\eps$ with $\textrm{cap}(\{u\neq u_\eps\})<\eps$. Now $A$ is a quasi-open set if and only if $A=\{u>0\}$ for a quasi-continuous function $u$. It follows from the definition that, in the setting of this paper, any open minimal partition is a quasi-open minimal partition, and any quasi-open minimal partition is a measurable minimal partition. Then, thanks to Theorem \ref{thm:main_result}, we find that the three formulations are actually equivalent (up to negligible sets).

\subsection*{Numerical simulations and open problems}
We conclude this introduction providing some numerical simulations. They were obtained implementing the construction in Section 3 (see also \cite{RamosTavaresTerracini}), via a point fix iteration and a finite element discretization. All the simulations were implemented in \texttt{FreeFem++} \cite{freefem}, a free software available at \url{https://freefem.org/}.

In Figure \ref{fig sum}, a numerical approximation of the optimal partition of the unit ball associated to the cost functionals
\begin{equation}\label{func sum}
	(\omega_1, \omega_2) \mapsto \lambda_1(\omega_1) + \lambda_2(\omega_1) + \lambda_1(\omega_2) + \lambda_2(\omega_2j)
\end{equation}
and 
\begin{equation*}
	(\omega_1, \omega_2) \mapsto \lambda_1(\omega_1) \lambda_2(\omega_1) \lambda_1(\omega_2) \lambda_2(\omega_2)
\end{equation*}
The two functionals share, numerically, the same optimal partition. The first functional \eqref{func sum} is linear, making the algorithm quite efficient in this case.

\begin{figure}[h!]
	\includegraphics[width=.4\textwidth]{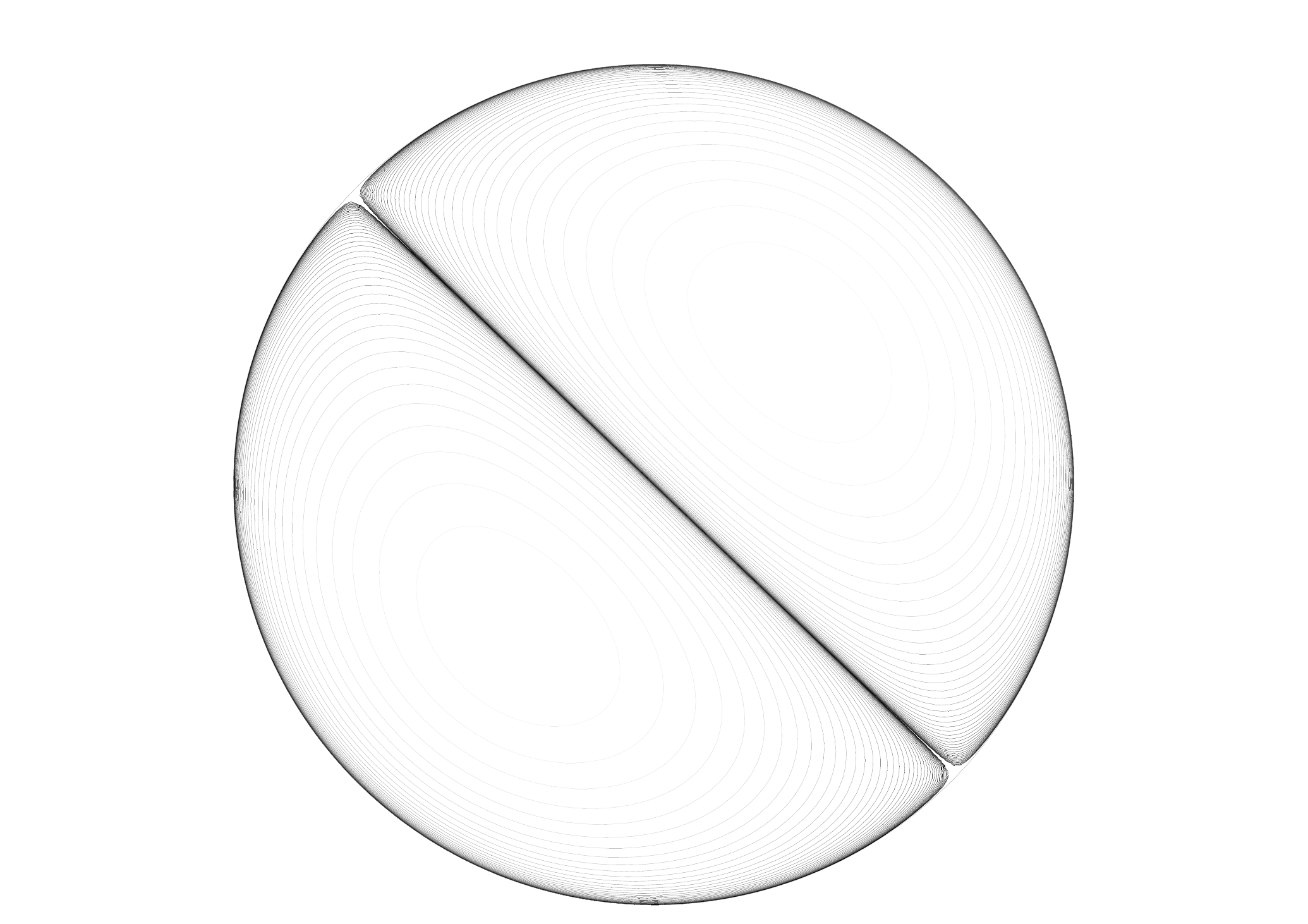} \hspace{.2cm}
	\includegraphics[width=.4\textwidth]{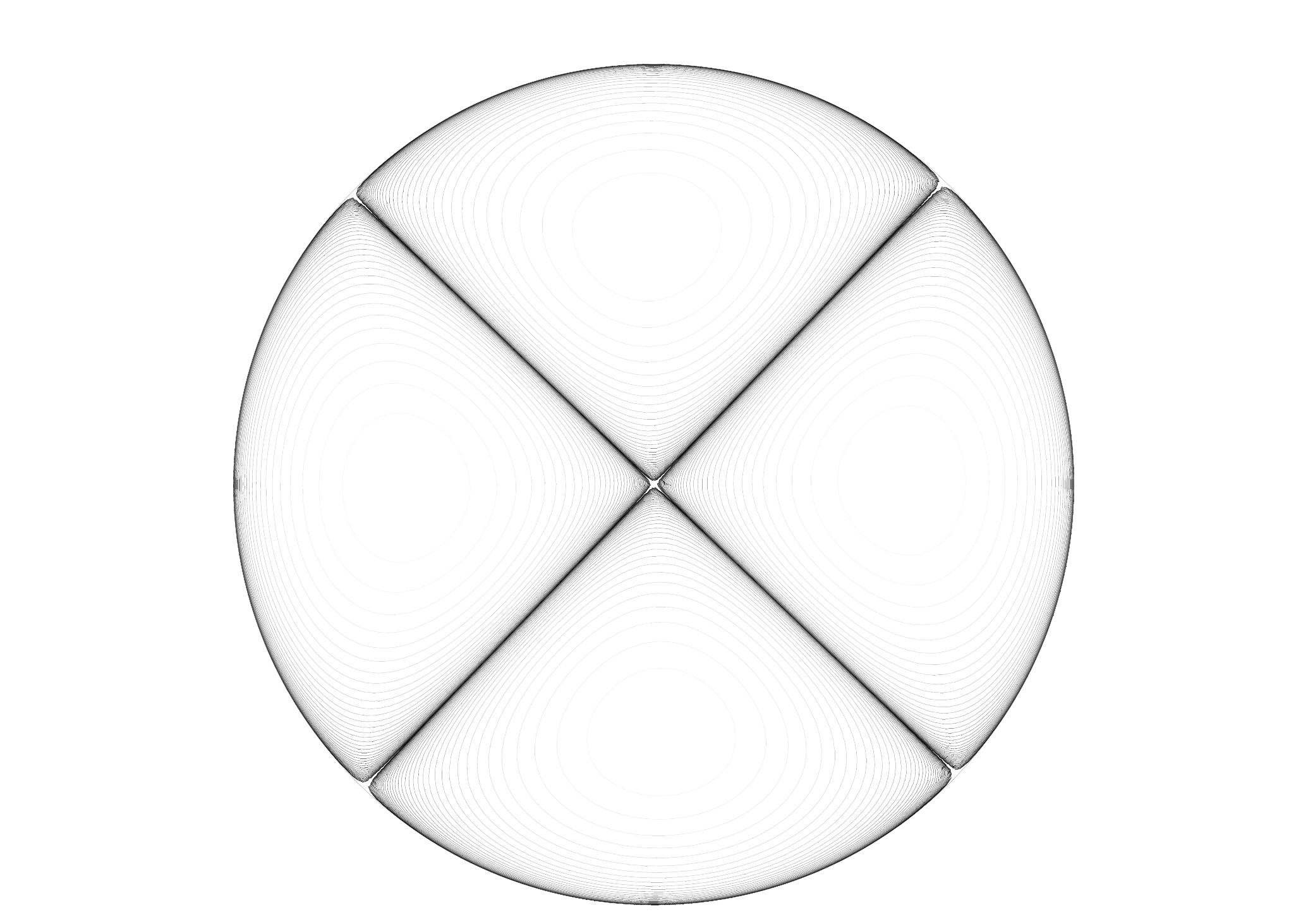} 
	\caption{Optimal partition and eigenfunctions for \eqref{func sum}. }
	\label{fig sum}
\end{figure}
On the left of Figure \ref{fig sum} is a representation of the eigenfunctions associated to the first eigenvalues of the partition: they highlight the two sets of the partition, which are symmetric semicircles. On the right the second eigenfunctions of the two sets. Observe the additional nodal lines (in connected sets the second eigenfunctions is sign-changing). In this case the strong symmetry of the two functionals seems to translate in the symmetry of their solutions.

In Figure \ref{fig detplus2}, a numerical approximation for
\begin{equation}\label{eqn detplus2}
	(\omega_1, \omega_2) \mapsto \lambda_1(\omega_1) \lambda_2(\omega_1) + \lambda_1(\omega_2)^2 + \lambda_2(\omega_2)^2.
\end{equation}
In this case the functional is no more symmetric and the solution too looses symmetry. Nevertheless, observe that the cost functional is scale-invariant.
\begin{figure}[h!]
	\includegraphics[width=.4\textwidth]{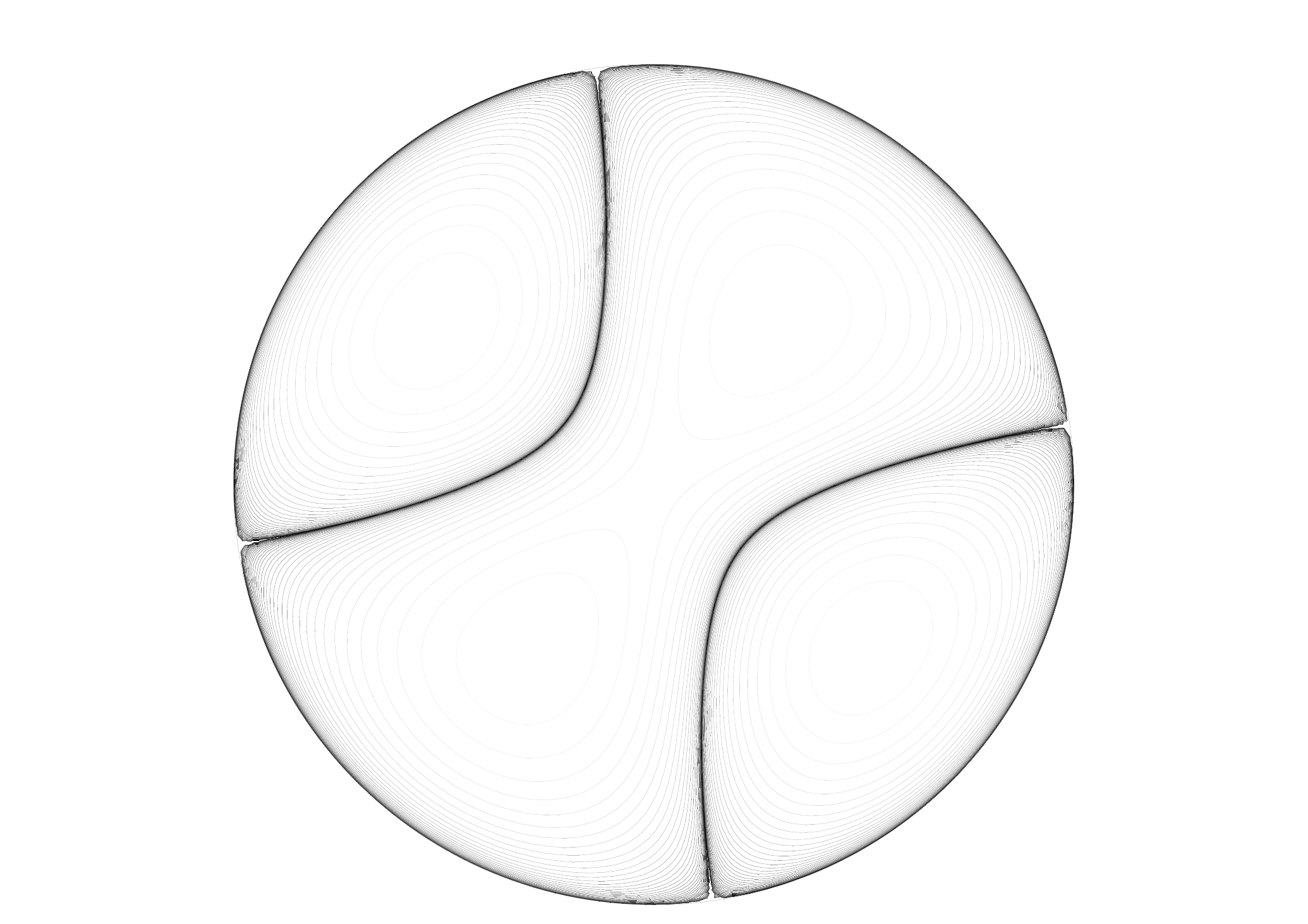} \hspace{.2cm}
	\includegraphics[width=.4\textwidth]{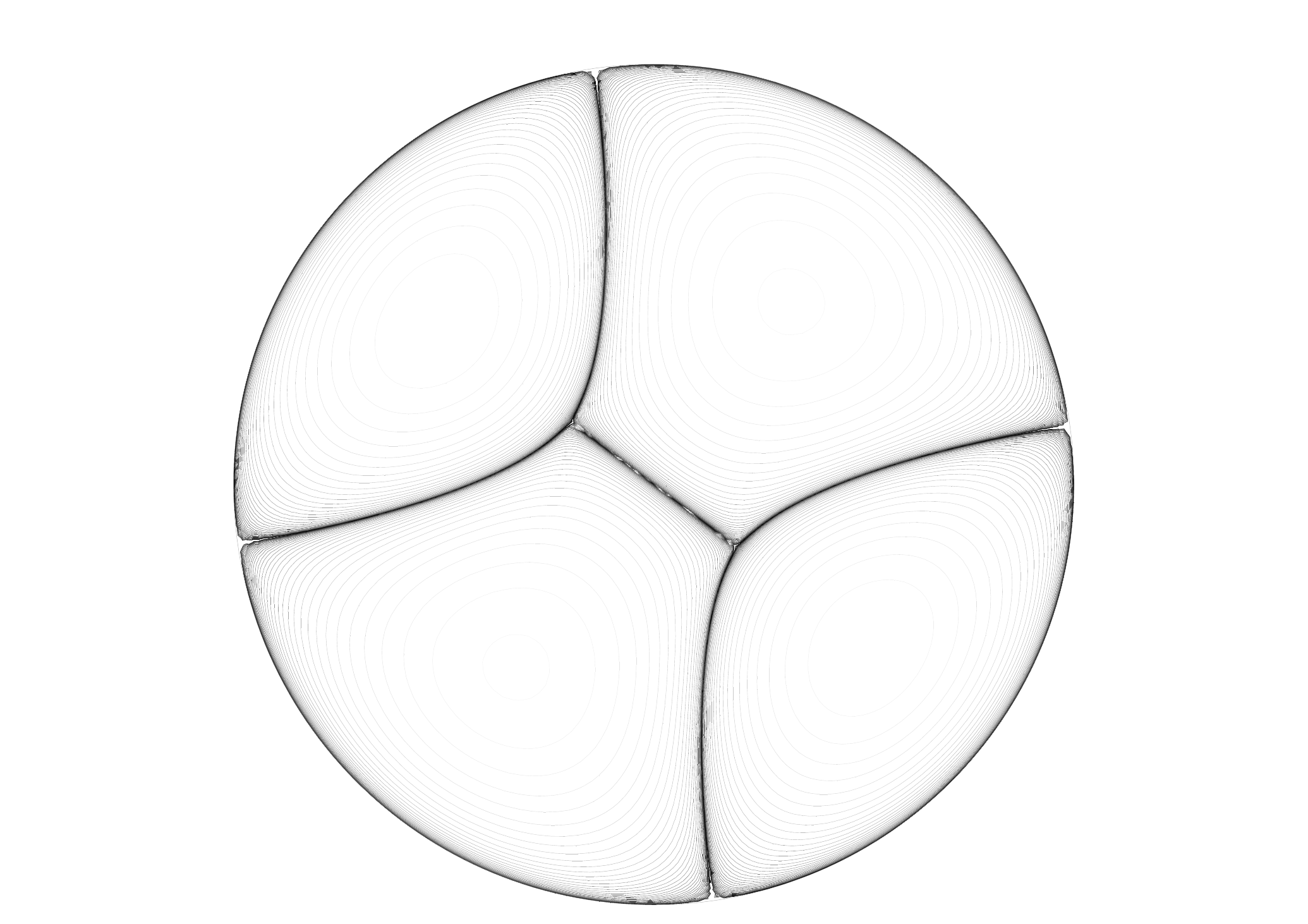} 
	\caption{Optimal partition and eigenfunctions for \eqref{eqn detplus2}. }
	\label{fig detplus2}
\end{figure}

On the left of Figure \ref{fig detplus2} is a representation of the eigenfunctions associated to the first eigenvalues of the partition and the two sets $\omega_1$ (in the center) and $\omega_2$ (the two lobes). On the right the second eigenfunctions of the two sets. Observe that the second domain is not connected and, numerically, it holds $\lambda_1(\omega_2) = \lambda_2(\omega_2)$. This implies that the first eigenvalue of the second subset of the partition has multiplicity two and one can choose the corresponding eigenfunction to have disjoint supports contained in only one of the two lobes at the time. This suggests that there are minimal partitions made of disconnected sets and were the eigenvalues have multiplicity higher than one (unlike the case of cost functions depending on first eigenvalues only).  Any choice of eigenfunctions will still verify \eqref{eq:extremality_main} with the same coefficients. Finally we point out that in this example the equi-partition of angles at singular points seems false (unlike in \cite{HHT}), although at the moment we lack any explicit counterexample of this fact.

In Figure \ref{fig max121}, a numerical approximation of the optimal partition of the unit ball  associated to
\begin{equation}\label{eqn max121}
	(\omega_1, \omega_2) \mapsto \left(\lambda_1(\omega_1)^{20} + \lambda_2(\omega_1)^{20} + \lambda_1(\omega_2)^{20}\right)^{1/20}.
\end{equation}
This functional gives a rather good approximation of the cost
\[
	(\omega_1, \omega_2) \mapsto \mathrm{max} \left(\lambda_2(\omega_1), \lambda_1(\omega_2)\right)
\]
which does not fall in the scope of our main result, as it is not strictly monotone with respect to $\lambda_1(\omega_1)$. It can be shown that the optimal partition corresponding to this last function is the two third sector of the circle ($\omega_1$) and a third sector of the circle ($\omega_2$). We obtain a rather similar result for \eqref{eqn max121}.
\begin{figure}[h!]
	\includegraphics[width=.4\textwidth]{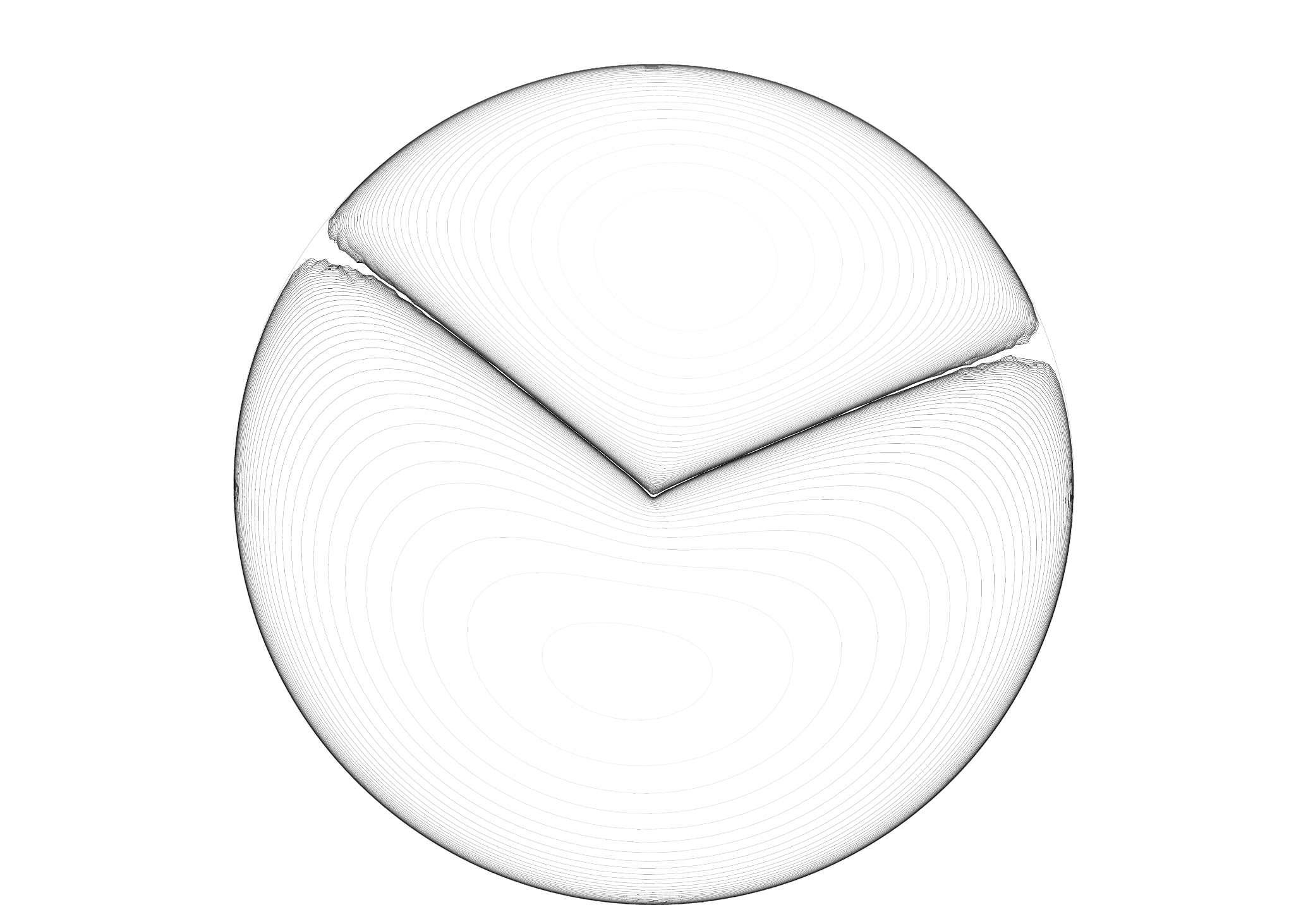} \hspace{.2cm}
	\includegraphics[width=.4\textwidth]{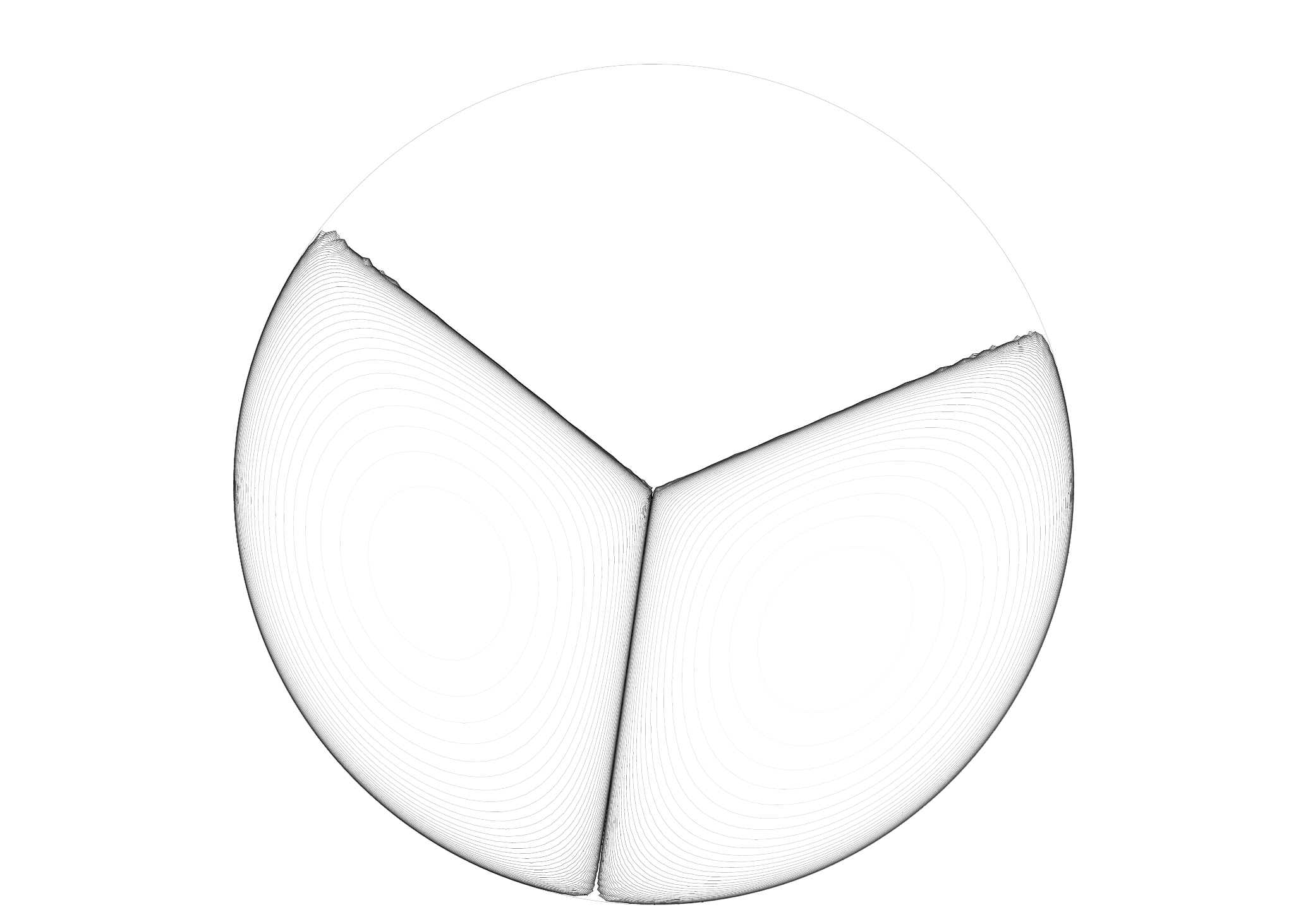} 
	\caption{Optimal partition and eigenfunctions for \eqref{eqn max121}. We observe some numerical artifact in the first picture: the presence of a region where the eigenfunctions are zero. This points out a weakness of our numerical scheme when some of the coefficients in \eqref{eq:extremality_main} are small compared to the others.}
	\label{fig max121}
\end{figure}
On the left the eigenfunctions associated to the first eigenvalues of the partition and on the right the second eigenfunction of the first subset. We point out a seemingly singular point at the center of the ball. According to Theorem \ref{thm:main_result} all the eigenfunctions in the energy functional are regular, and indeed the first eigenfunction of $\omega_1$ is regular, but it appears that as the exponent in the functional becomes larger and larger (the $l^p$ norm approaches the $l^{\infty}$ norm), the first eigenfunctions loses its regularity. This phenomenon will be the object of an upcoming paper.

\section{The penalization argument: an approximate problem}

In order to simplify the presentation, we only detail the proof in the case $m=2$, $k_1=k_2=:k\in \N$ and $\varphi_1=\varphi_2=:\varphi$. The general case follows by the same argument with some simple modifications. In this particular situation, problem \eqref{eqn prob} becomes
\begin{equation}\label{eq:OPPm=2_quasiopen}
    \widetilde c = \inf_{(\omega_1,\omega_2)\in \widetilde \Peh_2(\Omega)} F\left(\varphi(\widetilde \lambda_1(\omega_1),\ldots, \widetilde \lambda_{k}(\omega_1)),\varphi(\widetilde \lambda_1(\omega_2),\ldots, \widetilde \lambda_{k}(\omega_2))\right)
\end{equation}
where, we recall,
\[
\widetilde \Peh_2(\Omega)=\left\{(\omega_1,\omega_2)\subseteq \Omega\times \Omega:\ \omega_1,\omega_2 \text{ measurable, } |\omega_1\cap \omega_2|=0\right\}.
\]
Following \cite{RamosTavaresTerracini}, this problem has at least one open and regular solution in the sense of Theorem \ref{thm:main_result}. Here we show that \emph{every} solution of this problem is equivalent to an open and regular partition, together with some regularity properties of the associated eigenfunctions and a free boundary condition (\ref{thm:main_result}).

Keeping this in mind, let $( \omega_1, \omega_2)\in \widetilde \Peh_2(\Omega)$ be a solution of \eqref{eq:OPPm=2_quasiopen}. We denote by $\{(\widetilde \lambda_i(\omega_1), \phi_i)\}_{i \in \N}$ and $\{(\widetilde \lambda_i(\omega_2),\psi_i)\}_{i \in \N}$ the sequences of nondecreasing generalized eigenvalues (enumerated with multiplicity) and corresponding orthonormal eigenfunctions of the Laplacian in $\widetilde H^1_0(\omega_1)$ and $\widetilde H^1_0(\omega_2)$, respectively. We point out that, even though the eigenfunctions associated to the generalized eigenvalues belong to some Sobolev-like spaces, they are still $H^1_0(\Omega)$ functions. Thus we have the identities
\[
    \int_\Omega \phi_i \phi_j = \delta_{ij}, \quad \text{and} \quad\int_\Omega \nabla \phi_i \cdot \nabla  \phi_j = \widetilde \lambda_{i}( \omega_1) \delta_{ij},
\]
and similarly for $\{\psi_i\}_{i \in \N}$. Here $\delta_{ij}$ denotes the Kronecker symbol, that is $\delta_{ij}=1$ if $i=j$ and $0$ otherwise. 

\begin{remark}
We point out that, a priori, the sets $\mathrm{span}\{\phi_1,\ldots, \phi_k\}$ and $\mathrm{span}\{\psi_1,\ldots, \psi_k\}$ may not contain all the eigenfunctions associated to $\widetilde \lambda_k( \omega_1)$ and $\widetilde \lambda_k( \omega_2)$. However, we shall see later on that this is never the case, thanks to the spectral gap property (cfr.\ Theorem \ref{thm:main_result}).
\end{remark}
 
We denote $\boldsymbol{\phi}=(\phi_1,\ldots, \phi_k)$ and $\boldsymbol{\psi}=(\psi_1,\ldots, \psi_k)$
and we introduce two linear subspaces of $L^2(\Omega)$ generated by $\boldsymbol{\phi}$ adn $\boldsymbol{\psi}$, together with their orthogonal projections:
    \begin{align*}
        &L(\boldsymbol{\phi})=\mathrm{span}\left\{\phi_1,\ldots, \phi_k \right\}, \qquad P^\perp : L^2(\Omega) \to L(\boldsymbol{\phi})^\perp, \\ 
        &L(\boldsymbol{\psi})=\mathrm{span}\left\{\psi_1,\ldots, \psi_k \right\},  \qquad Q^\perp : L^2(\Omega) \to L(\boldsymbol{\psi})^\perp.
    \end{align*}
Exploiting the orthogonality of $\boldsymbol{\phi}$ and $\boldsymbol{\phi}$ we find that  for every $w\in L^2(\Omega)$ the projections are
\[
	P^\perp w=w-\sum_{i=1}^k \langle w,\phi_i\rangle_{L^2(\Omega)} \qquad \text{and} \qquad Q^\perp w=w-\sum_{i=1}^k \langle w,\psi_i\rangle_{L^2(\Omega)}.
\]
where $\langle \cdot,\cdot \rangle_{L^2(\Omega)}$ denotes the usual scalar product in $L^2(\Omega)$.

Our aim is to define an energy functional and an associated minimization problem whose solutions are close to those of \eqref{eq:OPPm=2_quasiopen}. In order to achieve this, we need to introduce a regularized energy functional with two additional terms. For the first one, inspired by \cite{RamosTavaresTerracini}, we relax the disjointedness constraint of the supports of the eigenfunctions $\boldsymbol{\phi}$ and $\boldsymbol{\psi}$ by introducing a competition term between groups of eigenfunctions; this allows to prove the regularity of both the partition and of the eigenfunctions. For the second one, using the projection operators $P^\perp$ and $Q^\perp$, we introduce a penalization that enables us to select the specific minimizer to which the sequence of approximated minimizers converges. This allows to prove that the singular limits are, up to orthogonal transformation, the original eigenfunctions. We need a couple of technical tools before introducing the approximating functionals.

Given $\mathbf{u}, \mathbf{v}\in H^1_0(\Omega; \R^k)$, define the $k \times k$ symmetric and positive definite matrices 
\begin{equation*}
    \begin{split}
        M(\mathbf{u}):=&\left( \int_\Omega \nabla u_i\cdot \nabla u_j\, + (P^\perp u_i)(P^\perp u_j) \right)_{i,j} = \left( \langle \nabla u_i , \nabla u_j \rangle_{L^2(\Omega)} + \langle P^\perp u_i, P^\perp u_j \rangle_{L^2(\Omega)} \right)_{i,j},\\
        N(\mathbf{v}):=&\left( \int_\Omega \nabla v_i\cdot \nabla v_j\, + (Q^\perp v_i)(Q^\perp v_j) \right)_{i,j} = \left( \langle \nabla v_i, \nabla v_j \rangle_{L^2(\Omega)} + \langle Q^\perp v_i, Q^\perp v_j \rangle_{L^2(\Omega)} \right)_{i,j}.
    \end{split}
\end{equation*}
Observe that for any orthogonal matrix $O\in \Oeh_k(\R)$ we have
\[
    M( O \mathbf{u}) = O M(\mathbf{u}) O^T , \qquad N( O \mathbf{v}) = O N(\mathbf{v}) O^T.
\]
In particular $M( O \mathbf{u})$ and $M(\mathbf{u})$ have the same spectrum.

We extend the function $\varphi:(\R^+)^k\to \R$ to the set of symmetric and positive definite matrices in the following way: given such a matrix $M$, we let 
\[
\varphi(M)= \varphi(\gamma_1,\ldots, \gamma_k),
\]
where $\gamma_1,\dots, \gamma_k$ are the (positive) eigenvalues of $M$ (with an abuse of notation, we identify the function acting on the eigenvalues with the function acting on the matrices). Observe that such function is well defined by the symmetry assumption (H3). By definition, we have 
\[
\varphi(OMO^T)=\varphi(M) \qquad \text{ for every $M$ symmetric positive definite, $O\in \mathcal{O}_k(\R)$}
\]
Since the original function (acting on the eigenvalues) is smooth and symmetric, we find that $\varphi$ is also a $C^1$ function in the set of symmetric and positive definite matrices. We denote
\[
	\frac{\partial}{\partial E_{ij}} \varphi(M) = \lim_{h \to 0} \frac{\varphi(M + h (E_{ij}+E_{ji})/2 ) - \varphi(M)}{h}
\]
the (tangent) derivative, in the set of symmetric matrices,  of $\varphi$ at $M$ with respect to the component $(i,j)$. Here $E_{ij}$ is the matrix whose component $(i,j)$ is equal to $1$, while all other components are $0$.
\begin{example} In some notable cases the extended functions can be computed explicitly. For the map $(s_1,\ldots,s_k)\mapsto \left(\sum_{i=1}^k (s_i)^p \right)^{1/p}$, we have $\varphi(M):=\left(\sum_{i=1}^k(\gamma_i)^p\right)^{1/p} =\left(\textrm{trace}(M^p)\right )^{1/p} $,
which coincides with the $p$-Schatten norm of a symmetric and positive definite matrix $M$. For $(s_1,\ldots, s_k)\mapsto \prod_{i=1}^k s_i$, we have $\varphi(M):= \prod_{i=1}^k \gamma_i= \det(M)$. These examples are related to \ref{eq:firstexamples}.
\end{example}

\begin{lemma}[{{\cite[Lemma 3.6]{RamosTavaresTerracini}}}]\label{lemma:derivative_0_outsidediagonal}
For every diagonal matrix $D=\diag(\gamma_1,\ldots, \gamma_k)$, we have
\[
\frac{\partial}{\partial E_{ii}} \varphi(D)=\partial_i \varphi (\gamma_1,\ldots, \gamma_k) \quad \forall i,\qquad \frac{\partial}{\partial E_{ij}} \varphi(D)=0 \quad \forall i\neq j.
\]
\end{lemma}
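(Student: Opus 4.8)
The statement to prove is Lemma~\ref{lemma:derivative_0_outsidediagonal}: for a diagonal matrix $D=\diag(\gamma_1,\ldots,\gamma_k)$ with positive entries, $\frac{\partial}{\partial E_{ii}}\varphi(D)=\partial_i\varphi(\gamma_1,\ldots,\gamma_k)$ and $\frac{\partial}{\partial E_{ij}}\varphi(D)=0$ for $i\neq j$.

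The plan. The diagonal case $\frac{\partial}{\partial E_{ii}}\varphi(D)=\partial_i\varphi(\gamma_1,\ldots,\gamma_k)$ is essentially immediate: perturbing $D$ by $hE_{ii}$ keeps the matrix diagonal, so its eigenvalues are $(\gamma_1,\ldots,\gamma_i+h,\ldots,\gamma_k)$, and by definition $\varphi(D+hE_{ii})=\varphi(\gamma_1,\ldots,\gamma_i+h,\ldots,\gamma_k)$; dividing by $h$ and letting $h\to 0$ gives $\partial_i\varphi$ directly from the $C^1$ regularity of $\varphi$ on $(\R^+)^k$.

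The genuinely substantive part is the off-diagonal derivative. Here I would use first-order eigenvalue perturbation theory: for the symmetric matrix $D+t(E_{ij}+E_{ji})/2$ with $i\neq j$, since $D$ has a complete orthonormal eigenbasis (the standard basis $e_1,\ldots,e_k$), the first-order correction to the eigenvalue $\gamma_\ell$ is $\langle (E_{ij}+E_{ji})/2\, e_\ell, e_\ell\rangle$, which vanishes for every $\ell$ because $(E_{ij}+E_{ji})/2$ has zero diagonal. (If $\gamma_\ell$ is a repeated eigenvalue one diagonalizes the perturbation on the corresponding eigenspace, but the perturbation restricted there still has zero trace against any orthonormal basis adapted to it — in fact one can simply note that, by symmetry of $\varphi$ in its arguments, only the multiset of eigenvalues matters and the trace of the first-order perturbation of the eigenvalue sum, hence of any individual symmetric function's linear part, is governed by the trace of $(E_{ij}+E_{ji})/2$.) Therefore each eigenvalue $\gamma_\ell(t)=\gamma_\ell+O(t^2)$, so the chain rule (using that $\varphi$ is $C^1$) gives $\frac{d}{dt}\varphi(D+t(E_{ij}+E_{ji})/2)\big|_{t=0}=\sum_\ell \partial_\ell\varphi(\gamma)\cdot \gamma_\ell'(0)=0$.

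The one point requiring care — and the main obstacle — is the possible degeneracy of eigenvalues of $D$, where the eigenvalue branches $t\mapsto\gamma_\ell(t)$ need not be differentiable, only Lipschitz, so one cannot blindly apply a derivative formula. The cleanest way around this is to avoid tracking individual eigenvalues: since $\varphi$ is symmetric and $C^1$, it is a $C^1$ function of the elementary symmetric polynomials $\sigma_1,\ldots,\sigma_k$ of the eigenvalues, and $\sigma_r(M)$ equals (up to sign) a coefficient of the characteristic polynomial, hence is a polynomial in the entries of $M$. One then computes $\frac{d}{dt}\sigma_r\big(D+t(E_{ij}+E_{ji})/2\big)\big|_{t=0}$ directly: a short determinant/cofactor computation shows this derivative is a sum of $2\times 2$ principal-type minors that all vanish when the added matrix has zero diagonal and $D$ is diagonal. (Concretely, the linear-in-$t$ term of $\det(M+tB)$ is $\mathrm{trace}(\mathrm{adj}(M)B)$, and $\mathrm{adj}(D)$ is diagonal while $B=(E_{ij}+E_{ji})/2$ has zero diagonal, so the trace vanishes; the same argument applied to all principal submatrices handles every $\sigma_r$.) Then the chain rule through $\varphi(M)=\Phi(\sigma_1(M),\ldots,\sigma_k(M))$ with $\Phi\in C^1$ immediately yields $\frac{\partial}{\partial E_{ij}}\varphi(D)=0$. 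This route is fully rigorous with no appeal to differentiability of eigenvalue branches, which is why I would prefer it; I would only mention the perturbation-theoretic heuristic as motivation.
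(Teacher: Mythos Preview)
The paper does not prove this lemma; it simply cites \cite[Lemma~3.6]{RamosTavaresTerracini}. So there is no in-paper argument to compare against, and I evaluate your proposal on its own merits.

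Your diagonal case is correct and immediate. For the off-diagonal case, your preferred route (b) through the elementary symmetric polynomials has a genuine gap: the claim that a symmetric $C^1$ function of the eigenvalues factors as a $C^1$ function $\Phi$ of $(\sigma_1,\ldots,\sigma_k)$ is false in general. In dimension $k=2$, take $\varphi(\gamma_1,\gamma_2)=|\gamma_1-\gamma_2|^{3/2}$, which is symmetric and $C^1$; in the coordinates $(\sigma_1,\sigma_2)$ it becomes $\Phi(\sigma_1,\sigma_2)=(\sigma_1^2-4\sigma_2)^{3/4}$, whose $\sigma_2$-derivative blows up on the diagonal $\gamma_1=\gamma_2$. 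So the chain rule through $\Phi$ is unjustified precisely at the degenerate points you correctly flagged as the obstacle. (If one reads the paper's word ``smooth'' literally and assumes $\varphi\in C^\infty$, then Glaeser's theorem does give $\Phi\in C^\infty$ and your argument (b) goes through; but under the stated hypothesis $\varphi\in C^1$ it does not.)

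Your perturbation-theory sketch (a) contains the right idea but is stated too loosely. For the analytic one-parameter family $t\mapsto D+tB$ with $B=(E_{ij}+E_{ji})/2$, Rellich's theorem gives analytic eigenvalue branches $\gamma_\ell(t)$. In the degenerate case the individual derivatives $\gamma_\ell'(0)$ need \emph{not} vanish --- they are the eigenvalues of $B$ restricted to the $\gamma$-eigenspace, which for $i,j$ both in that eigenspace are $\pm\tfrac12$. What saves you is the symmetry of $\varphi$: whenever $\gamma_\ell=\gamma_{\ell'}$ one has $\partial_\ell\varphi(\gamma)=\partial_{\ell'}\varphi(\gamma)$, and within each eigenspace $\sum_\ell\gamma_\ell'(0)=\mathrm{trace}\bigl(B|_{\text{eigenspace}}\bigr)=0$ since $B$ has zero diagonal. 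Spelling this out makes (a) rigorous.

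A cleaner argument, using only what the paper already asserts (orthogonal invariance and $C^1$ regularity of the matrix extension of $\varphi$): for $i\neq j$, let $O=\diag(1,\ldots,-1,\ldots,1)$ with $-1$ in the $i$-th slot. Then $ODO^T=D$ while $OBO^T=-B$, so
\[
\varphi(D+tB)=\varphi\bigl(O(D+tB)O^T\bigr)=\varphi(D-tB),
\]
i.e.\ $t\mapsto\varphi(D+tB)$ is an even $C^1$ function, hence has derivative zero at $t=0$.
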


We are now ready to introduce the family of approximating functionals. Fix any exponent $1/2< q <2^*/4 = N/[2(N-2)^+]$. For $\beta>0$ we define the $C^1$ energy functional $E_{\beta}:H^1_0(\Omega,\R^k)\times H^1_0(\Omega,\R^k)\to \R$ as
\begin{align*}
E_{\beta}(\mathbf{u},\mathbf{v})=F\left(\varphi(M(\mathbf{u})),\varphi(N(\mathbf{v}))\right)+\frac{\beta}{q}\int_\Omega \Big(\sum_{i=1}^k u_i^2\Big)^q \Big(\sum_{i=1}^k v_i^2\Big)^q
\end{align*}
and the least energy level
\begin{equation}\label{eq:c_p_beta}
c_{\beta}:=\inf \left\{ E_{\beta}(\mathbf{u},\mathbf{v}): \mathbf{u}, \mathbf{v}\in \Sigma(L^2) \right\},
\end{equation}
where
\[
	\Sigma(L^2):=\left\{ \mathbf{w} =(w_1,\ldots, w_k)\in H^1_0(\Omega; \R^k):\ \int_\Omega w_i w_j =\delta_{ij} \text{ for every }i,j \right\}.
\]
The functional and the set $\Sigma(L^2)$ are invariant under multiplication by orthogonal matrices
\[
	E_{\beta}(\mathbf{u}, \mathbf{v}) = E_{\beta}(O_1\mathbf{u}, O_2\mathbf{v}) \qquad \forall O_1, O_2 \in \mathcal{O}_k(\R),
\]
and
\[
	(\mathbf{u}, \mathbf{v}) \in \Sigma(L^2) \iff (O_1\mathbf{u}, O_2\mathbf{v}) \in \Sigma(L^2), \;\forall O_1, O_2 \in \mathcal{O}_k(\R).
\]
One should keep in mind that $E_{\beta}$ and $c_{\beta}$ also depend on the vectors of eigenfunctions $\boldsymbol{\phi}$, $\boldsymbol{\psi}$. However, in order to simplify the notation, we will not point out this dependence explicitly.

\begin{lemma}\label{lemma:easystatements}
For each $\beta>0$ we have
\[
F(\varphi(M(\mathbf{u}),\varphi(N(\mathbf{v})))\geq F(\varphi(\lambda_1(\Omega),\lambda_k(\Omega)),\ldots, \varphi(\lambda_1(\Omega),\ldots, \lambda_k(\Omega)))\qquad \forall \mathbf{u},\mathbf{v}\in \Sigma(L^2)
\]
and $c_\beta$ is finite.
\end{lemma}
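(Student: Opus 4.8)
The plan is to estimate the two terms of $E_\beta$ separately. The penalization term $\frac{\beta}{q}\int_\Omega(\sum u_i^2)^q(\sum v_i^2)^q$ is nonnegative, so for the \emph{lower bound} on $F(\varphi(M(\mathbf u)),\varphi(N(\mathbf v)))$ it suffices to work with the first term alone; for the \emph{finiteness} of $c_\beta$ we will additionally need to exhibit one admissible pair $(\mathbf u,\mathbf v)\in\Sigma(L^2)$ for which the penalization term is finite, which is automatic since $H^1_0(\Omega)\hookrightarrow L^{2q}(\Omega)$ for the chosen range $1/2<q<2^*/4$ (so $2q<2^*$), whence the integral is finite for any $H^1_0$ vectors and in particular for, say, $\mathbf u=\mathbf v=(\phi_1,\dots,\phi_k)$ (the first $k$ eigenfunctions of $\Omega$ itself, which lie in $\Sigma(L^2)$).

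For the lower bound, fix $(\mathbf u,\mathbf v)\in\Sigma(L^2)$ and let $\gamma_1\le\dots\le\gamma_k$ be the eigenvalues of $M(\mathbf u)$. Since $M(\mathbf u)=\bigl(\langle\nabla u_i,\nabla u_j\rangle + \langle P^\perp u_i,P^\perp u_j\rangle\bigr)_{ij}\ge\bigl(\langle\nabla u_i,\nabla u_j\rangle\bigr)_{ij}$ in the sense of quadratic forms (the extra Gram matrix of the projections is positive semidefinite), the min-max characterization of eigenvalues of symmetric matrices gives $\gamma_j\ge\mu_j$, where $\mu_1\le\dots\le\mu_k$ are the eigenvalues of the Gram matrix $\bigl(\int_\Omega\nabla u_i\cdot\nabla u_j\bigr)_{ij}$. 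Because $u_1,\dots,u_k$ are $L^2(\Omega)$-orthonormal elements of $H^1_0(\Omega)$, spanning a $k$-dimensional subspace $M\subset H^1_0(\Omega)$, the Courant--Fischer formula yields $\mu_j\ge\lambda_j(\Omega)$ for every $j=1,\dots,k$ (the largest Rayleigh quotient on $M$ dominates $\lambda_k(\Omega)$, and more precisely the $j$-th eigenvalue of the Gram matrix dominates $\lambda_j(\Omega)$ by applying min-max to the restricted quotient). Hence $\gamma_j\ge\lambda_j(\Omega)$ for all $j$, and the monotonicity assumption (H1) on $\varphi$ gives $\varphi(M(\mathbf u))=\varphi(\gamma_1,\dots,\gamma_k)\ge\varphi(\lambda_1(\Omega),\dots,\lambda_k(\Omega))$; the same reasoning applies to $N(\mathbf v)$. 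Finally, applying the monotonicity of $F$ in each variable (H1) yields $F(\varphi(M(\mathbf u)),\varphi(N(\mathbf v)))\ge F(\varphi(\lambda_1(\Omega),\dots,\lambda_k(\Omega)),\varphi(\lambda_1(\Omega),\dots,\lambda_k(\Omega)))$, which is the claimed inequality.

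For the finiteness of $c_\beta$: by the lower bound just proved, $E_\beta(\mathbf u,\mathbf v)\ge F(\varphi(\lambda_1(\Omega),\dots,\lambda_k(\Omega)),\varphi(\lambda_1(\Omega),\dots,\lambda_k(\Omega)))>-\infty$ on $\Sigma(L^2)$, so $c_\beta$ is bounded below. For the upper bound, evaluate $E_\beta$ at the test pair $(\mathbf u,\mathbf v)=((\phi_1,\dots,\phi_k),(\phi_1,\dots,\phi_k))$ where $\phi_j$ are the first $k$ Dirichlet eigenfunctions of $\Omega$: this pair is in $\Sigma(L^2)$, $F(\varphi(M(\mathbf u)),\varphi(N(\mathbf v)))$ is a finite real number, and the penalization term is finite by the Sobolev embedding $H^1_0(\Omega)\hookrightarrow L^{2q}(\Omega)$ noted above, so $c_\beta\le E_\beta(\mathbf u,\mathbf v)<+\infty$. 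The only mild subtlety — and the main point to get right — is the matrix min-max argument comparing the eigenvalues of $M(\mathbf u)$ with $\lambda_j(\Omega)$ through the intermediate Gram matrix; everything else is immediate from (H1) and standard Sobolev embeddings.
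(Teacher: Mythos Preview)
Your argument is correct and follows essentially the same route as the paper. The paper diagonalizes $M(\mathbf u)$ by passing to $\tilde{\mathbf u}=O\mathbf u$ with $O\in\Oeh_k(\R)$ and then uses monotonicity of $F$ and $\varphi$ together with $\int_\Omega|\nabla \tilde u_i|^2+(P^\perp\tilde u_i)^2\ge\lambda_i(\Omega)$; you instead compare $M(\mathbf u)\ge\bigl(\int_\Omega\nabla u_i\cdot\nabla u_j\bigr)_{ij}$ in the positive semidefinite order and invoke the matrix min--max plus Courant--Fischer, which is the same content written more explicitly. One small slip: for the finiteness of the competition term you need the embedding $H^1_0(\Omega)\hookrightarrow L^{4q}(\Omega)$ (so that $\sum u_i^2\in L^{2q}$ and the product lies in $L^1$), not $L^{2q}$; this is still covered by the hypothesis $q<2^*/4$, so the conclusion is unaffected. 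The paper, incidentally, only records $c_\beta>-\infty$ here and postpones the upper bound to Proposition~\ref{unif h1 and linf}, where the test pair $(\boldsymbol{\phi},\boldsymbol{\psi})$ with disjoint supports gives $c_\beta\le\widetilde c$.
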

\begin{proof}
For any $(\mathbf{u}, \mathbf{v}) \in \Sigma(L^2)$, take $O_1,O_2\in \mathcal{O}_k(\R)$ in such a way that $O_1M(\mathbf{u})O_1^T$, $O_2 N(\mathbf{v})O_2^T$ are diagonal and the elements on the diagonal are ordered nondecreasingly. Let $\mathbf{\widetilde u}=O_1 \mathbf{u}$, $ \mathbf{\widetilde v}=O_2 \mathbf{v}$. Exploiting the monotonicity of $F$ and $\varphi$, and the invariance of $\Sigma(L^2)$ and $\varphi$ under orthogonal transformations, we find that
\begin{align*}
F(\varphi(M(\mathbf{u}),\varphi(N(\mathbf{v})))&= F(\varphi(M(\mathbf{\widetilde u}),\varphi(N(\mathbf{\widetilde v})))\\
&= F\left(\varphi\left(\int_\Omega |\nabla \widetilde u_1|^2+(P^\perp \widetilde u_1)^2,\ldots, \int_\Omega |\nabla \widetilde u_k|^2+(P^\perp \widetilde u_k)^2\right),\right.\\
								&\phantom{=F(\varphi (\int} \left.  \varphi\left(\int_\Omega |\nabla \widetilde v_1|^2+(Q^\perp \widetilde v_1)^2, \ldots, \int_\Omega |\nabla \widetilde v_k|^2+(Q^\perp \widetilde v_k)^2\right)\right)\\
									&\geq F(\varphi(\lambda_1(\Omega),\ldots, \lambda_k(\Omega)),\ldots, \varphi(\lambda_1(\Omega),\ldots, \lambda_k(\Omega)))
\end{align*}
Then, recalling that $\beta>0$, we conclude 
\[
c_\beta\geq F(\varphi(\lambda_1(\Omega),\ldots, \lambda_k(\Omega)),\ldots, \varphi(\lambda_1(\Omega),\ldots, \lambda_k(\Omega))) >-\infty.\qedhere
\]
\end{proof}

We have established that for any $\beta > 0$, the functional $E_\beta$ is bounded from below in $\Sigma(L^2)$. We now show that the infimum is always attained, making the least energy level $c_\beta$ in \eqref{eq:c_p_beta} a critical level for $E_\beta$. For notation convenience, let
\begin{equation*}
	\Mbeta = \{(\mathbf{u}, \mathbf{v}) \in \Sigma(L^2) : E_{\beta}(\mathbf{u}, \mathbf{v}) = c_\beta\}.
\end{equation*}

\begin{proposition}\label{thm:minimizer_for_c_p}
For any $\beta>0$, we have the following: 
\begin{enumerate}
\item[(a)] the value $c_{\beta}$ is a critical level for the functional $E_{\beta}$ and $\Mbeta$ is  not empty. Moreover, for every $(\mathbf{u},\mathbf{v})=((u_{1},\ldots, u_{k}),(v_{1},\ldots, v_{k}))\in M_\beta$, we have
\[
E'_{\beta}(\mathbf{u}, \mathbf{v})=0.
\]
\item[(b)]  For any $O_1, O_2 \in \Oeh_k(\R)$ orthogonal matrices,
\[
	(\mathbf{u}, \mathbf{v}) \in \Mbeta \implies (O_1 \mathbf{u}, O_2 \mathbf{v}) \in \Mbeta.
\]
Therefore, if $(\mathbf{u}, \mathbf{v}) \in \Mbeta$ we can further assume that it verifies
\begin{align}
    \int_\Omega \nabla u_i\cdot \nabla u_j + (P^\perp u_i) (P^\perp u_j) &=\int_\Omega \nabla v_i\cdot \nabla v_j + (Q^\perp v_i) (Q^\perp v_j) =0& \forall i\neq j  \label{eq:orthogonality} \\
    \int_\Omega |\nabla u_i|^2 + (P^\perp u_i)^2  \leq \int_\Omega |\nabla u_j|^2 + (P^\perp u_j)^2 &, \; \int_\Omega |\nabla v_i|^2 + (Q^\perp v_i)^2 \leq \int_\Omega |\nabla v_j|^2 + (Q^\perp v_j)^2 & \forall i\leq j. \label{eq:monotonicity}
\end{align}
In particular, $M(\mathbf{u}),N(\mathbf{v})$ are orthogonal matrices, and
\begin{multline*}
E_{\beta}(\mathbf{u},\mathbf{v})=F\left( \varphi\left(\int_\Omega |\nabla u_1|^2 + (P^\perp u_1)^2,\ldots, \int_\Omega |\nabla u_k|^2 + (P^\perp u_k)^2\right),\right. \\ \left.\varphi\left(\int_\Omega |\nabla v_1|^2 + (P^\perp v_1)^2,\ldots, \int_\Omega |\nabla v_k|^2 + (P^\perp v_k)^2\right)\right)\\
+\frac{\beta}{q}\int_\Omega \Big(\sum_{j=1}^k u_i^2\Big)^q \Big(\sum_{i=1}^k v_i^2 \Big)^q.
\end{multline*}
\item[(c)] For $i,j = 1, \dots, k$ there exist Lagrange multipliers $\mu_{ij,\beta}, \nu_{ij,\beta} >0$, and coefficients
\begin{equation}\label{eq:coefficients}
\begin{aligned}
a_{i,\beta}&=\partial_1 F\left(\varphi(M(\mathbf{u})),\varphi(N(\mathbf{v}))\right)\cdot  \partial_{i} \varphi \left(\int_\Omega |\nabla u_{1}|^2+(P^\perp u_{1})^2,\ldots, \int_\Omega |\nabla  u_{k}|^2+(P^\perp u_{k})^2\right) > 0 \\
 b_{i,\beta}&=\partial_2 F\left(\varphi(M(\mathbf{u})),\varphi(N(\mathbf{v}))\right) \cdot \partial_{i} \varphi \left(\int_\Omega |\nabla v_{1}|^2+(Q^\perp v_{1})^2,\ldots, \int_\Omega |\nabla  v_{k}|^2+(Q^\perp  v_{k})^2\right) > 0
\end{aligned}
\end{equation}
such that the components of $(\mathbf{u}, \mathbf{v})$ solve the system
\begin{equation}\label{eq:equation_for_u_p_v_p}
\begin{cases}
a_{i,\beta}(-\Delta u_{i} + P^\perp u_i)=\sum_{j=1}^k \mu_{ij,\beta} u_{j}-\beta u_{i}\left(\sum_{j=1}^ku_{j}^2\right)^{q-1}\left(\sum_{j=1}^k v_{j}^2\right)^q  \\[10pt]
b_{i,\beta}(-\Delta v_{i}+ Q^\perp v_i)= \sum_{j=1}^k \nu_{ij,\beta} v_{j}-\beta v_{i} \left(\sum_{j=1}^k v_{j}^2\right)^{q-1} \left(\sum_{j=1}^k u_{j}^2\right)^{q} 
\end{cases} \quad \text{ in } \Omega.
\end{equation}
 
\end{enumerate}

\end{proposition}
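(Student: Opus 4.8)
The plan is to establish the three items essentially in the order they are stated, with the key analytic input being the strict coercivity of $E_\beta$ on $\Sigma(L^2)$ coming from the assumptions (H1)--(H2).

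\emph{Existence of a minimizer (a).} I would take a minimizing sequence $(\mathbf{u}_n,\mathbf{v}_n)\in\Sigma(L^2)$ for $c_\beta$. The $L^2$-orthonormality constraint gives $\|u_{i,n}\|_{L^2}=\|v_{i,n}\|_{L^2}=1$, so the penalization term $\tfrac{\beta}{q}\int_\Omega(\sum u_i^2)^q(\sum v_i^2)^q$ is nonnegative, and by Lemma \ref{lemma:easystatements} the functional is bounded below. The crucial step is to get a uniform $H^1_0$-bound: since $E_\beta(\mathbf{u}_n,\mathbf{v}_n)\le c_\beta+1$ for large $n$ and the penalization is nonnegative, we get $F(\varphi(M(\mathbf{u}_n)),\varphi(N(\mathbf{v}_n)))\le c_\beta+1$; by the monotonicity (H1) and coercivity (H2) of $F$ and $\varphi$ (arguing as in the proof of Lemma \ref{lemma:easystatements}, diagonalizing $M(\mathbf{u}_n)$ and $N(\mathbf{v}_n)$ by orthogonal matrices, which changes neither the value of the functional nor membership in $\Sigma(L^2)$), each eigenvalue $\gamma_i(M(\mathbf{u}_n))=\int_\Omega|\nabla\widetilde u_{i,n}|^2+(P^\perp\widetilde u_{i,n})^2$ stays bounded, hence $\sum_i\int_\Omega|\nabla u_{i,n}|^2=\mathrm{trace}(M(\mathbf{u}_n))-\sum_i\|P^\perp u_{i,n}\|_{L^2}^2$ is bounded (the orthonormality bounds $\|P^\perp u_{i,n}\|_{L^2}\le 1$). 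Thus $(\mathbf{u}_n,\mathbf{v}_n)$ is bounded in $H^1_0(\Omega;\R^k)^2$; up to a subsequence it converges weakly in $H^1_0$ and strongly in $L^2$ (Rellich) and in $L^{2q+\epsilon}$ (here the bound $q<2^*/4$ guarantees $2q<2^*$, so $(\sum u_i^2)^q(\sum v_i^2)^q$ passes to the limit and is actually continuous along the strong $L^{\cdot}$ convergence). The $L^2$-strong convergence preserves the constraint, so the limit lies in $\Sigma(L^2)$; weak $H^1_0$ lower semicontinuity of each $\int_\Omega|\nabla u_i|^2$ together with the fact that $P^\perp$ is $L^2$-continuous and the monotonicity of $F\circ\varphi$ in the eigenvalues gives lower semicontinuity of $F(\varphi(M(\cdot)),\varphi(N(\cdot)))$ along the diagonalized sequence, and hence $E_\beta$ at the limit is $\le c_\beta$, so the limit is a minimizer and $M_\beta\neq\emptyset$. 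Since $\Sigma(L^2)$ is a smooth Hilbert manifold (the constraint map $(\mathbf u,\mathbf v)\mapsto(\langle u_i,u_j\rangle,\langle v_i,v_j\rangle)$ is a submersion), the Lagrange multiplier rule applies and every $(\mathbf u,\mathbf v)\in M_\beta$ is a constrained critical point, i.e.\ $E'_\beta(\mathbf u,\mathbf v)=0$ modulo the tangent space of the constraint — which is exactly what (c) will make explicit.

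\emph{Invariance and normalization (b).} The identities $M(O\mathbf u)=OM(\mathbf u)O^T$, $\varphi(OMO^T)=\varphi(M)$, the orthogonal invariance of the penalization (it depends only on $\sum u_i^2=|\mathbf u|^2$), and the invariance of $\Sigma(L^2)$ under $(\mathbf u,\mathbf v)\mapsto(O_1\mathbf u,O_2\mathbf v)$ together give $E_\beta(O_1\mathbf u,O_2\mathbf v)=E_\beta(\mathbf u,\mathbf v)$; hence $M_\beta$ is stable under these transformations. Given $(\mathbf u,\mathbf v)\in M_\beta$, choose $O_1,O_2$ diagonalizing $M(\mathbf u),N(\mathbf v)$ with nondecreasing eigenvalues; this yields a representative satisfying \eqref{eq:orthogonality}--\eqref{eq:monotonicity}. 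The diagonalized matrices have exactly the eigenvalues $\int_\Omega|\nabla u_i|^2+(P^\perp u_i)^2$, which are positive (strictly, since $u_i\not\equiv 0$ as $\|u_i\|_{L^2}=1$), so $M(\mathbf u),N(\mathbf v)$ are diagonal positive definite — the word ``orthogonal'' in the statement must be read as ``diagonal'' in a suitable frame; the displayed formula for $E_\beta$ is then immediate from the definition of $\varphi$ on diagonal matrices.

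\emph{The Euler--Lagrange system (c).} Differentiating $E_\beta$ at a (diagonalized) minimizer along a variation $\dot{\mathbf u}$: by the chain rule $\partial_{u_i}[F(\varphi(M(\mathbf u)),\varphi(N(\mathbf v)))]=\partial_1 F\cdot\sum_{l,m}\tfrac{\partial\varphi}{\partial E_{lm}}(M(\mathbf u))\,\partial_{u_i}M_{lm}(\mathbf u)$; since $M(\mathbf u)$ is diagonal, Lemma \ref{lemma:derivative_0_outsidediagonal} kills all off-diagonal $\tfrac{\partial\varphi}{\partial E_{lm}}$, leaving only $l=m=i$, and $\partial_{u_i}M_{ii}(\mathbf u)[\dot u_i]=2\int_\Omega\nabla u_i\cdot\nabla\dot u_i+(P^\perp u_i)(P^\perp\dot u_i)$ (using that $P^\perp$ is self-adjoint). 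This produces the coefficient $a_{i,\beta}=\partial_1 F\cdot\partial_i\varphi(\dots)>0$ of \eqref{eq:coefficients}, positivity being exactly (H1). The penalization contributes $\beta\,u_i(\sum u_j^2)^{q-1}(\sum v_j^2)^q$. The constraint $\langle u_i,u_j\rangle=\delta_{ij}$ contributes Lagrange multipliers $\mu_{ij,\beta}$ (a symmetric matrix), and testing the $i$-th equation against $u_i$ shows $\mu_{ii,\beta}=a_{i,\beta}(\int|\nabla u_i|^2+(P^\perp u_i)^2)+\beta\int u_i^2(\sum u_j^2)^{q-1}(\sum v_j^2)^q>0$; that the full matrix $\mu_{\cdot,\cdot,\beta}$ can be taken with positive entries — or rather that the relevant diagonal multipliers are positive, which is what is used downstream — follows from this computation (the off-diagonal ones vanish if one diagonalizes, but the statement as written should be interpreted as: one may choose the frame so that $\mu_{ij,\beta}=\mu_{ii,\beta}\delta_{ij}$ with $\mu_{ii,\beta}>0$). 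Writing the weak equation for all admissible test functions (not just constrained variations — the multipliers absorb the constraint) gives \eqref{eq:equation_for_u_p_v_p}, and symmetrically for $\mathbf v$.

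\emph{Main obstacle.} The delicate point is the uniform $H^1_0$-bound on the minimizing sequence: one must be sure that coercivity in each single eigenvalue of $M(\mathbf u)$ (assumption (H2)) actually forces boundedness of the Dirichlet energies $\int_\Omega|\nabla u_i|^2$, which requires diagonalizing first and controlling $\|P^\perp u_i\|_{L^2}\le\|u_i\|_{L^2}=1$ so that the trace of $M$ bounds the total Dirichlet energy. A secondary subtlety is the passage to the limit in the penalization term, where the choice $1/2<q<2^*/4$ is used precisely so that $2q<2^*$ and Rellich gives strong $L^{2q}$ convergence; and the bookkeeping of Lagrange multipliers, i.e.\ checking that the multiplier matrix can be diagonalized compatibly with the $M$-diagonalizing frame, which uses the symmetry of the multiplier matrix together with \eqref{eq:orthogonality}.
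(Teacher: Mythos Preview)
Your plan is essentially correct, and parts (b) and (c) match the paper's reasoning closely (including your sharp observation that ``orthogonal'' in the statement should be read as ``diagonal in the chosen frame'', and that the positivity assertion on $\mu_{ij,\beta}$ really only concerns the diagonal multipliers). The difference lies in part (a).

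The paper does \emph{not} argue by weak lower semicontinuity. Instead it applies Ekeland's variational principle to the minimizing sequence so as to obtain a Palais--Smale sequence on the manifold $\Sigma(L^2)$, i.e.\ $E_\beta'|_{\Sigma(L^2)}(\mathbf u_n,\mathbf v_n)\to 0$ in $H^{-1}$. After diagonalizing each term by orthogonal matrices (so that Lemma~\ref{lemma:derivative_0_outsidediagonal} kills the off-diagonal derivatives of $\varphi$), the approximate Euler--Lagrange system \eqref{eq:equation_for_u_p_v_p} holds up to an $o_n(1)$ error in $H^{-1}$; combined with the uniform positive lower bound on the coefficients $a_{i,n},b_{i,n}$, this upgrades weak $H^1_0$-convergence to strong $H^1_0$-convergence, and the limit solves \eqref{eq:equation_for_u_p_v_p} exactly. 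The argument is then completed by reference to \cite[Theorem~3.8]{RamosTavaresTerracini}.

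Your direct-method approach is more elementary (no Ekeland, no Palais--Smale analysis) and suffices for the bare statement of the proposition. Its one delicate point, which you pass over quickly, is the weak $H^1_0$ lower semicontinuity of $\mathbf u\mapsto\varphi(M(\mathbf u))$: since $\varphi$ acts on the \emph{eigenvalues} of $M(\mathbf u)$ and the off-diagonal entries $\int_\Omega\nabla u_i\cdot\nabla u_j$ are not weakly continuous, one must argue that the ordered eigenvalues themselves are weakly l.s.c.; this follows by diagonalizing first, passing to a subsequence along which each diagonal entry converges, and using the Courant--Fischer characterization of the eigenvalues of the (possibly non-diagonal) limit matrix together with the monotonicity of $\varphi$. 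The paper's Ekeland route sidesteps this issue entirely and, as a bonus, delivers strong $H^1_0$-convergence of the minimizing sequence --- not needed for the proposition itself, but in line with how \cite{RamosTavaresTerracini} is organized.
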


In view of the previous result, whenever we refer to $\Mbeta$ we assume that its functions verify the additional conditions \eqref{eq:orthogonality} and \eqref{eq:monotonicity}.

\begin{proof}
The result follows by the critical point theory of functionals in Hilbert spaces. First, some preliminary remarks :
\begin{enumerate}
\item $\Sigma(L^2)$ is a $C^1$ submanifold of $H^1_0(\Omega,\R^k)$ of codimension $k(k+1)/2$ (see \cite[Lemma 3.7]{RamosTavaresTerracini}).
\item $E_{\beta} : H^1_0(\Omega) \times H^1_0(\Omega) \to \R^+$ is a $C^1$ functional and, for any $\boldsymbol{\xi},\boldsymbol{\eta} \in H^1_0(\Omega,\R^k)$, we have
\begin{align*}
	\frac{E_{\beta}'(\mathbf{u},\mathbf{v})(\boldsymbol{\xi},\boldsymbol{\eta})}{2} = &\partial_1 F\left(\varphi(M(\mathbf{u})),\varphi(N(\mathbf{v}))\right) \sum_{i\leq j}^k \frac{\partial}{\partial E_{ij}} \varphi(M(\mathbf{u}))\int_\Omega (\nabla u_i \cdot \nabla \xi_j +(P^\perp u_i) \xi_j)\\
					&+\partial_2 F\left(\varphi(M(\mathbf{u})),\varphi(N(\mathbf{v}))\right) \sum_{i\leq j}^k \frac{\partial }{\partial E_{ij}} \varphi(N(\mathbf{v}))\int_\Omega (\nabla v_i \cdot \nabla \eta_j + (Q^\perp v_j) \eta_j) \\
					&+\beta\sum_{i=1}^k \int_\Omega u_i\xi_i \Big(\sum_{j=1}^k u_i^2\Big)^{q-1}\Big( \sum_{i=1}^k v_i^2\Big)^q +\beta\sum_{i=1}^k \int_\Omega v_i\eta_i \Big(\sum_{j=1}^k v_i^2\Big)^{q-1}\Big( \sum_{i=1}^k u_i^2\Big)^q.
\end{align*}
\end{enumerate}
Let $\beta>0$. By Lemma \ref{lemma:easystatements} we have $c_\beta>-\infty$. We take a minimizing sequence $\mathbf{u}_n=(u_{1,n},\ldots, u_{k,n})$, $\mathbf{v}_n=(v_{1,n},\ldots, v_{k,n})\in \Sigma(L^2)$,  $E_{\beta}(\mathbf{u}_n,\mathbf{v}_n)\to c_{\beta}$ as $n\to \infty$.  By Ekeland's Variational Principle and by property (1) listed above, we can suppose without loss of generality that $E_{\beta}|_{\Sigma(L^2)}'(\mathbf{u}_n,\mathbf{v}_n)\to 0$ in $H^{-1}(\Omega,\R^k)$. For each $n \in \N$ take $O_{1,n},O_{2,n} \in \mathcal{O}_k(\R)$ such that $O_{1,n} M(\mathbf{u}_n) O_{1,n}^T$ and $O_{2,n} M(\mathbf{v}_n) O_{2,n}^T$ are diagonal matrices and let
\[
	\mathbf{\widetilde{u}}_n := O_{1,n} \mathbf{u}_n \quad \text{and} \quad \mathbf{\widetilde{v}}_n := O_{2,n} \mathbf{v}.
\]
Then $E_\beta(\mathbf{\widetilde{u}}_n, \mathbf{\widetilde{v}}_n)=E_\beta(\mathbf{u}_n, \mathbf{v}_n)$, $\mathbf{\widetilde{u}}_n, \mathbf{\widetilde{v}}_n \in \Sigma(L^2)$ and
\[
E_\beta(\mathbf{\widetilde u}_n,\mathbf{\widetilde v}_n)\to c_\beta,\qquad E_\beta'|_{\Sigma(L^2)}(\mathbf{\widetilde u}_n,\mathbf{\widetilde v}_n)\to 0 \quad \text{ as } n\to \infty.
\]
Therefore
\begin{multline*}
F\left( \varphi\left(\int_\Omega |\nabla \widetilde u_{1,n}|^2 + (P^\perp \widetilde u_{1,n})^2,\ldots, \int_\Omega |\nabla \widetilde u_{k,n}|^2 + (P^\perp \widetilde u_{k,n})^2\right),\right. \\ \left.\varphi\left(\int_\Omega |\nabla \widetilde v_{1,n}|^2 + (P^\perp \widetilde v_{1,n})^2,\ldots, \int_\Omega |\nabla \widetilde v_{k,n}|^2 + (P^\perp \widetilde v_{k,n})^2\right)\right) \leq E_{\beta}(	\mathbf{\widetilde{u}}_n,	\mathbf{\widetilde{v}}_n)\leq c_{\beta}+1
\end{multline*}
for large $n$. Since $\mathbf{\widetilde u}_n,\mathbf{\widetilde v}_n\in \Sigma(L^2)$ then 
\[
    \lambda_1(\Omega) \leq\int_\Omega | \nabla \widetilde u_{i,n}|^2 ,  \int_\Omega |\nabla \widetilde v_{i,n}|^2.
\]
Combining this information with (H1)--(H2) we deduce that $\mathbf{\widetilde u}_n,\mathbf{\widetilde v}_n$ are bounded sequences in $H^1_0(\Omega,\R^k)$, so that (up to subsequence) $\mathbf{\widetilde u}_n\rightharpoonup \mathbf{\widetilde u}$, $\mathbf{\widetilde v}_n\rightharpoonup \mathbf{\widetilde v}$ weakly in $H^1_0(\Omega,\R^k)$, strongly in $L^r(\Omega;\R^k)$, for every $1\leq r<2^*$. We can now conclude exactly as in \cite[Theorem 3.8]{RamosTavaresTerracini}, observing that $ \frac{\partial}{\partial E_{ij}} \varphi(M(\mathbf{\widetilde{u}}_n))= \frac{\partial}{\partial E_{ij}} \varphi(N(\mathbf{\widetilde{v}}_n))=0$ for $i\neq j$ (recall Lemma \ref{lemma:derivative_0_outsidediagonal}), that
\begin{align*}
& \partial_1 F\left(\varphi(M(\mathbf{\widetilde{u}}_n)),\varphi(N(\mathbf{\widetilde{v}}_n))\right) \partial_{i} \varphi \left(\int_\Omega |\nabla \widetilde u_{1,n}|^2+(P^\perp \widetilde u_{1,n})^2,\ldots, \int_\Omega |\nabla \widetilde u_{k,n}|^2+(P^\perp \widetilde u_{k,n})^2\right)\geq \delta>0,\\
& \partial_2 F\left(\varphi(M(\mathbf{\widetilde{v}}_n)),\varphi(N(\mathbf{\widetilde{v}}_n))\right) \partial_{i} \varphi \left(\int_\Omega |\nabla \widetilde v_{1,n}|^2+(P^\perp \widetilde v_{1,n})^2,\ldots, \int_\Omega |\nabla \widetilde v_{k,n}|^2+(P^\perp \widetilde v_{k,n})^2\right)\geq \delta>0
\end{align*}
for some $\delta>0$ independent from $n$, and that $\mathbf{\widetilde u}_n,\mathbf{\widetilde v}_n$ satisfy \eqref{eq:equation_for_u_p_v_p} up to an $\textrm{o}_n(1)$ perturbation in $H^{-1}(\Omega,\R^k)$. We can then conclude that actually $\mathbf{\widetilde u}_n, \mathbf{\widetilde v}_n$ converge strongly to $\mathbf{\widetilde u},\mathbf{\widetilde v}$ in $H^1_0(\Omega,\R^k)$, which solve \eqref{eq:equation_for_u_p_v_p}.
\end{proof}

\section{Asymptotic Limits: Proof of Theorem \ref{thm:main_result}}
We study the entirety of $\Mbeta$, the set of critical points of $E_\beta$ at level $c_\beta$, in order to establish its limit when $\beta \to + \infty$. Our main aim is to show that the functions in $\Mbeta$ are uniformly H\"older continuous in $\beta$.  This allows to prove strong convergence in $H^1$ to $(\boldsymbol{\phi},\boldsymbol{\psi})$, together with the desired regularity results in Theorem \ref{thm:main_result}.  

\subsection{Uniform bounds}

Recall the definition of $\widetilde c$ from \eqref{eq:OPPm=2_quasiopen}.  We start with some easier bounds of the $L^\infty$ and $H^1$ norms.

\begin{proposition}[Uniform $L^\infty$ and $H^1$ bounds]\label{unif h1 and linf}
We have 
\[
c_{\beta}\leq \widetilde c\qquad \text{ for every } \beta>0.
\]
There exists $C>0$ independent of $\beta$ such that for any $(\mathbf{u}_{\beta},\mathbf{v}_{\beta}) \in \Mbeta$ we have
\begin{equation*}
  \frac{\beta}{q}\int_\Omega \Big(\sum_{i=1}^k u_{i,\beta}^2\Big)^q \Big(\sum_{i=1}^k v_{i,\beta}^2\Big)^q\leq C
\end{equation*}
and
\begin{equation}\label{eq:unif h1 and linf_2}
\frac{1}{C}\leq a_{1,\beta}, \ldots, a_{k,\beta}\leq  C, \qquad  \frac{1}{C}\leq b_{1,\beta}, \ldots, b_{k,\beta}\leq  C.
\end{equation}
Furthermore,
\[
\| \mathbf{u}_{\beta}\|_{H^1_0(\Omega,\R^k)},\  \| \mathbf{v}_{\beta}\|_{H^1_0(\Omega,\R^k)}  \leq C ,\qquad \|  \mathbf{u}_{\beta}\|_{L^\infty(\Omega,\R^k)}, \|\mathbf{v}_{\beta}\|_{L^\infty(\Omega,\R^k)}\leq C.
\]
\end{proposition}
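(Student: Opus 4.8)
The plan is to descend from a global energy comparison to pointwise bounds on the components, in four stages. For the inequality $c_{\beta}\le\widetilde c$ I would simply test $E_{\beta}$ with the eigenfunction vectors of the fixed minimizer, that is, take $(\mathbf u,\mathbf v)=(\boldsymbol{\phi},\boldsymbol{\psi})$. The orthonormality relations give $(\boldsymbol{\phi},\boldsymbol{\psi})\in\Sigma(L^2)$; since $P^{\perp}\phi_i=0$ and $Q^{\perp}\psi_i=0$ directly from the definition of the projections, one has $M(\boldsymbol{\phi})=\diag(\widetilde\lambda_1(\omega_1),\dots,\widetilde\lambda_k(\omega_1))$ and $N(\boldsymbol{\psi})=\diag(\widetilde\lambda_1(\omega_2),\dots,\widetilde\lambda_k(\omega_2))$; and since $\phi_i$ vanishes a.e.\ outside $\omega_1$, $\psi_i$ vanishes a.e.\ outside $\omega_2$ and $|\omega_1\cap\omega_2|=0$, the penalization integral $\int_\Omega(\sum_i\phi_i^2)^q(\sum_i\psi_i^2)^q$ is zero. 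Hence $E_{\beta}(\boldsymbol{\phi},\boldsymbol{\psi})=F(\varphi(\widetilde\lambda_1(\omega_1),\dots,\widetilde\lambda_k(\omega_1)),\varphi(\widetilde\lambda_1(\omega_2),\dots,\widetilde\lambda_k(\omega_2)))=\widetilde c$ for every $\beta$, so in particular on $\Mbeta$ we have $E_{\beta}(\mathbf u_\beta,\mathbf v_\beta)=c_\beta\le\widetilde c$.

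Combining $E_{\beta}(\mathbf u_\beta,\mathbf v_\beta)\le\widetilde c$ with the lower bound $F(\varphi(M(\mathbf u_\beta)),\varphi(N(\mathbf v_\beta)))\ge c_0$ from Lemma~\ref{lemma:easystatements} (with $c_0$ a fixed constant) yields at once the bound on the penalization term. For the $H^1$ and coefficient bounds I would use hypotheses (H1)--(H2). Because $\int_\Omega u_{i,\beta}^2=1$, every eigenvalue of $M(\mathbf u_\beta)$ and $N(\mathbf v_\beta)$ is at least $\lambda_1(\Omega)$, so by monotonicity $\varphi(M(\mathbf u_\beta))$ and $\varphi(N(\mathbf v_\beta))$ lie in a compact subset of the positive half-line bounded away from $0$; feeding this and $F(\varphi(M(\mathbf u_\beta)),\varphi(N(\mathbf v_\beta)))\le\widetilde c$ into (H1)--(H2) for $F$ forces an upper bound on $\varphi(M(\mathbf u_\beta))$ and $\varphi(N(\mathbf v_\beta))$, and then the same monotonicity/coercivity argument for $\varphi$, with all but one of its arguments frozen at $\lambda_1(\Omega)$, produces a bound $\Gamma$, independent of $\beta$, for every eigenvalue of $M(\mathbf u_\beta)$ and $N(\mathbf v_\beta)$. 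Since $\mathrm{trace}\,M(\mathbf u_\beta)=\sum_i\big(\int_\Omega|\nabla u_{i,\beta}|^2+(P^\perp u_{i,\beta})^2\big)$ is at least $\|\nabla\mathbf u_\beta\|_{L^2}^2$ and at most $k\Gamma$, the $H^1$ bound for $\mathbf u_\beta$ (and likewise for $\mathbf v_\beta$) follows. Finally, in \eqref{eq:coefficients} the arguments of $F$ and of $\varphi$ now range in a fixed compact subset of the positive orthant, on which $\partial_1F$, $\partial_2F$ and the $\partial_i\varphi$ are continuous and strictly positive by (H1); this gives $1/C\le a_{i,\beta},b_{i,\beta}\le C$.

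The only genuinely technical part is the uniform $L^\infty$ bound. I would first bound the Lagrange multipliers: testing the $i$-th equation of \eqref{eq:equation_for_u_p_v_p} with $u_{l,\beta}$ and using $\int_\Omega u_{j,\beta}u_{l,\beta}=\delta_{jl}$, the two-sided control of $a_{i,\beta}$, the boundedness of the entries of $M(\mathbf u_\beta)$ (all its eigenvalues being $\le\Gamma$), and the bound on the penalization integral just obtained, one gets $|\mu_{ij,\beta}|\le C$ and $|\nu_{ij,\beta}|\le C$ uniformly in $\beta$. Rewriting the equation as $-\Delta u_{i,\beta}+V_{i,\beta}u_{i,\beta}=h_{i,\beta}$ with $V_{i,\beta}:=1+\frac{\beta}{a_{i,\beta}}\big(\sum_j u_{j,\beta}^2\big)^{q-1}\big(\sum_j v_{j,\beta}^2\big)^q\ge1$ and $h_{i,\beta}:=\sum_{l=1}^k\langle u_{i,\beta},\phi_l\rangle_{L^2(\Omega)}\phi_l+\frac{1}{a_{i,\beta}}\sum_{j=1}^k\mu_{ij,\beta}u_{j,\beta}$ (so that the competition term, which enters with a favourable sign, is absorbed into $V_{i,\beta}$), Kato's inequality gives $-\Delta|u_{i,\beta}|+|u_{i,\beta}|\le|h_{i,\beta}|\le C_0+C_1\sum_j|u_{j,\beta}|$, where $C_0=\sum_l\|\phi_l\|_{L^\infty(\Omega)}$ is finite (the generalized eigenfunctions being bounded, and $|\langle u_{i,\beta},\phi_l\rangle_{L^2(\Omega)}|\le1$) and $C_1$ is controlled by the bounds on $\mu_{ij,\beta}$ and $1/a_{i,\beta}$. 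Comparing $|u_{i,\beta}|$ with the solution $z_{i,\beta}\in H^1_0(\Omega)$ of $-\Delta z+z=|h_{i,\beta}|$ and running, simultaneously on $z_{1,\beta},\dots,z_{k,\beta}$, a Brezis--Kato / elliptic-regularity bootstrap --- starting from the uniform $H^1\hookrightarrow L^{2^*}$ bound of the previous step and gaining integrability through the embeddings $W^{2,p}\hookrightarrow L^{r}$ in a number of steps depending only on $N$ --- yields a uniform $L^\infty$ bound on each $u_{i,\beta}$; the argument for $v_{i,\beta}$ is identical with $Q^\perp$ and $\psi_l$ in place of $P^\perp$ and $\phi_l$.

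The main obstacle is exactly this last step: one must make sure that every constant entering it --- above all the Lagrange multiplier bounds that feed into $h_{i,\beta}$ --- is genuinely independent of $\beta$, and that the bootstrap closes in a $\beta$-uniform number of steps. This is where the penalization bound and the two-sided control of the coefficients $a_{i,\beta},b_{i,\beta}$ obtained in the second stage are essential; everything else is a routine consequence of (H1)--(H2) and Lemma~\ref{lemma:easystatements}.
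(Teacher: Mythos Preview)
Your proposal is correct and follows essentially the same route as the paper: the energy comparison with $(\boldsymbol{\phi},\boldsymbol{\psi})$, the $H^1$ and coefficient bounds via (H1)--(H2) and Lemma~\ref{lemma:easystatements}, the multiplier bounds by testing with $u_{l,\beta}$, and the $L^\infty$ bound via Kato's inequality followed by a Brezis--Kato iteration all match the paper's argument. The only cosmetic difference is that the paper sums first and runs the iteration directly on $w_\beta=\sum_i|u_{i,\beta}|$ (testing with $w_\beta^{1+\delta}$), whereas you phrase the last step as a comparison with $z_{i,\beta}$ and an elliptic-regularity bootstrap; these are standard equivalent implementations of the same idea.
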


\begin{proof}
 Since $\phi_i=0$ a.e. in $\Omega \setminus \omega_1$, $\psi_i=0$ a.e. in $\Omega \setminus \omega_2$ and $|\omega_1\cap \omega_2|=0,$ then  $\phi_i \cdot \phi_j=0$ a.e. in $\Omega$ for every $i,j$, hence
\[
\int_\Omega \Big(\sum_{i=1}^k \phi_i^2\Big)^q \Big(\sum_{i=1}^k \psi_i^2 \Big)^q  =0.
\]
Moreover, $P^\perp \phi_i=Q^\perp \psi_i=0$, as $\phi_i\in L(\boldsymbol{\phi})$ and $\psi_i\in L(\boldsymbol{\psi})$. Therefore,  since $\boldsymbol{\phi},\boldsymbol{\psi}\in \Sigma(L^2)$, 
\begin{align*}
\widetilde c=& F\left(\varphi(\lambda_1(\omega_1),\ldots, \lambda_{k}(\omega_1)),\varphi(\lambda_1(\omega_2),\ldots, \lambda_{k}(\omega_2))\right)\\
	=& F\left( \varphi\left(\int_\Omega |\nabla \phi_1|^2,\ldots, \int_\Omega |\nabla \phi_k|^2\right)  ,  \varphi\left(\int_\Omega |\nabla \psi_1|^2,\ldots, \int_\Omega |\nabla \psi_k|^2\right)  \right) \\
						=& E_{\beta}(\boldsymbol{\phi},\boldsymbol{\psi})\geq  \min_{\mathbf{u},\mathbf{v}\in \Sigma(L^2)} E_{\beta}(\mathbf{u},\mathbf{v}) =c_{\beta}.
\end{align*}

By the monotonicity assumptions on $F$ and $\varphi$, and since $\beta>0$, we see that
\begin{multline*}
F\left( \varphi\left(\int_\Omega |\nabla u_{1,\beta}|^2,\ldots, \int_\Omega |\nabla u_{k,\beta}|^2\right)  ,  \varphi\left(\int_\Omega |\nabla v_{1,\beta}|^2,\ldots, \int_\Omega |\nabla v_{k,\beta}|^2\right)  \right)   \\
\leq	 F\left( \varphi\left(\int_\Omega |\nabla u_{1,\beta}|^2 + (P^\perp u_{1,\beta})^2,\ldots, \int_\Omega |\nabla u_{k,\beta}|^2 + (P^\perp u_{k,\beta})^2\right),\right.  \\
	 \left. \varphi\left(\int_\Omega |\nabla v_{1,\beta}|^2 + (P^\perp v_{1,\beta})^2,\ldots, \int_\Omega |\nabla v_{k,\beta}|^2 + (P^\perp v_{k,\beta})^2\right)\right) \\
	 + \frac{\beta}{q}\int_\Omega \Big(\sum_{i=1}^k u_{i,\beta}^2\Big)^q \Big(\sum_{i=1}^k v_{i,\beta}^2\Big)^q= E_{\beta}(\mathbf{u}_{\beta},\mathbf{v}_{\beta}) =  c_{\beta} \leq \widetilde c.  
	 \end{multline*}
Combining this with Lemma \ref{lemma:easystatements} and our assumptions of $F$ and $\varphi$, (H1)--(H2), we conclude that there exists a constant $C>0$ such that
\[
\int_\Omega |\nabla u_{i,\beta}|^2 + (P^\perp u_{i,\beta})^2, \ \int_\Omega |\nabla v_{i,\beta}|^2+(Q^\perp v_{i,\beta})^2,\  \frac{\beta}{q}\int_\Omega \Big(\sum_{i=1}^k u_{i,\beta}^2\Big)^q \Big(\sum_{i=1}^k v_{i,\beta}^2\Big)^q \leq C \qquad \text{ for all }  \beta>0.
\]

\smallbreak

Since $F$ and $\varphi$ are of class $C^1$, by \eqref{eq:coefficients} we conclude that $1/C\leq a_{i,\beta},b_{i,\beta}\leq C$ for some $C>0$.

The only thing left to prove is the $L^\infty$ uniform estimate. Let $i,l\in \{1,\ldots, k\}$. Testing the equation of $u_{i,\beta}$ in \eqref{eq:equation_for_u_p_v_p} by $u_{l,\beta}$ yields
\[
\mu_{il,\beta}=\delta_{il} a_{i,\beta}\int_\Omega (|\nabla u_{i,\beta}|^2+(P^\perp u_{i,\beta})^2) + \int_\Omega \beta u_{i,\beta}u_{l,\beta} \Big(\sum_{j=1}^k u_{j,\beta}^2\Big)^{q-1}\Big(\sum_{j=1}^k v_{j,\beta}^2\Big)^q
\]
and hence $|\mu_{il,p,\beta}|\leq C$ independently of $\beta>0$. Recall that $P^\perp u_{i,\beta}=u_{i,\beta}-\sum_{j=1}^k \langle u_{i,\beta},\phi_j \rangle_{L^2(\Omega)} \phi_j$. By Kato's inequality, we have
\begin{multline*}
-\Delta |u_{i,\beta}|\leq -\text{sign}(u_{i,\beta}) \Delta u_{i,\beta} \\
				 = \sum_{j=1}^k \frac{\mu_{ij,\beta}}{a_{i,\beta}} \text{sign}(u_{i,\beta}) u_{j,\beta} -  |u_{i,\beta}| + \sum_{j=1}^k \langle u_{i,\beta},\phi_j \rangle_{L^2(\Omega)} \text{sign}(u_{i,\beta}) \phi_j \\
				  - \beta|u_{i,\beta}| \Big(\sum_{j=1}^ku_{j}^2\Big)^{q-1}\Big(\sum_{j=1}^k v_{j}^2\Big)^q\\
				 \leq  \sum_{j=1}^k C |u_{j,\beta}| + \sum_{j=1}^k \langle u_{i,\beta},\phi_j \rangle_{L^2(\Omega)} \text{sign}(u_{i,\beta}) \phi_j .
\end{multline*}
By summing up for $i=1, \dots, k$ and letting $w_{\beta}:=\sum_{i=1}^k |u_{i,\beta}|\geq 0$, we have
\begin{equation}\label{eq:w_inequality_BrezisKato}
	-\Delta  w_{\beta} \leq C (w_{\beta} + \|w_{\beta}\|_{L^2(\Omega)}).
\end{equation}
Since $\{w_{\beta}\}$ is uniformly bounded in $L^2(\Omega)$, a Brezis-Kato type argument allows us to conclude. Indeed, assume that $w_{\beta}\in L^{2+\delta}(\Omega)$ for some $\delta\geq 0$. To simplify, we omit the dependent of $w$ on $\beta$ for the remainder of the proof, and consider $N\geq 3$ (otherwise the proof is simpler). Testing \eqref{eq:w_inequality_BrezisKato} by $w^{1+\delta}$, using Sobolev and H\"older inequalities, and denoting the best Sobolev constant of $H^1_0(\Omega)\hookrightarrow L^{2^*}(\Omega)$ by $C_S$ we find
\begin{multline*}
	C_S^2 \frac{1+\delta}{(1+\delta/2)^2} \|w\|_{L^{2^*(2+\delta)/2}(\Omega)}^{2+\delta} \leq  \frac{1+\delta}{(1+\delta/2)^2}\int_\Omega |\nabla w^{1+\delta/2}|^2\\
	\leq  C( \|w\|^{2+\delta}_{L^{2+\delta}(\Omega)} + \|w\|_{L^2(\Omega)}\|w\|_{L^{1+\delta}(\Omega)}) \leq  C \|w\|_{L^{2+\delta}(\Omega)}^{2+\delta}.
\end{multline*}
Hence there exists a constant $\kappa > 0$ such that
\[
	\|w\|_{L^{2^*(2+\delta)/2}(\Omega)} \leq \Big(\kappa \frac{(1+\delta/2)^2}{1+\delta}\Big)^\frac{1}{2+\delta} \|w\|_{L^{2+\delta}(\Omega)}.
\]
We wish to iterate this inequality in order to obtain a bound for the $L^\infty$ norm of $w$. To this end, let $\{\delta_n\}_n$  be the sequence of positive real numbers such that $\delta_0=0$ and $2+\delta_{n+1}= 2^*(2+\delta_n)/2$. We immediately note that $\delta_n\geq (2^*/2)^{n-1}$, thus
\[
	D:= \prod_{n=1}^\infty \left(\kappa \frac{(1+\delta_n/2)^2}{1+\delta_n}\right)^\frac{1}{2+\delta_n}=\exp\left(\sum_{n=1}^\infty \frac{\log \Big(\frac{\kappa(1+\delta_n/2)^2}{1+\delta_n}\Big)}{2+\delta_n}\right)<\infty.
\]
As a consequence
\[
	\|w\|_{L^\infty(\Omega)} \leq D \|w\|_{L^2(\Omega)}
\]
and the proof is concluded, as $w=w_{\beta}$ is uniformly bounded in $L^2(\Omega)$.
\end{proof}

\medbreak

We proceed our analysis of the family of solutions $\Mbeta$, focusing this time on stronger compactness results independent of the separation parameter $\beta > 0$. Our goal is to show that it is possible to take the limit as $\beta \to + \infty$ in the family of minimizers of Proposition \ref{thm:minimizer_for_c_p}. In particular, we want to apply the well-established framework of \cite{NTTV1, STTZ1, STZ1}. We start by some uniform estimates of the $C^{0,\alpha}$ norms of the solutions. Here we scheme through the proof of this result without entering too much into the details since the result, even though expected to hold, is not present in this from in the literature due to a different form of the competition term (cfr.\ in particular \cite{STTZ1}).

\begin{proposition}[Uniform H\"older bounds]\label{prop:uniform_Holder_bounds}
For any given $\alpha\in (0,1)$ there exists a constant $C_{\alpha}>0$, which may depend on $\alpha$ but not on $\beta$, such that for any $(\mathbf{u}_{\beta},\mathbf{v}_{\beta}) \in \Mbeta$ 
\[
	\|\mathbf{u}_{\beta}\|_{C^{0,\alpha}(\overline \Omega,\R^k)},\ \|\mathbf{v}_{\beta}\|_{C^{0,\alpha}(\overline \Omega,\R^k)}\leq C_{\alpha}.
\]
\end{proposition}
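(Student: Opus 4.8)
The plan is to prove these bounds by a contradiction and blow-up argument, following the by now standard scheme for strongly competing systems of \cite{NTTV1, STTZ1, STZ1}, and indicating only the adjustments forced by the structure of \eqref{eq:equation_for_u_p_v_p}: the $\beta$-dependent coefficients $a_{i,\beta},b_{i,\beta}$, the zeroth-order projection terms $P^\perp u_i$, $Q^\perp v_i$, and the group competition term with variable exponent $q$. The first step is to recast the equations in a convenient form. By Proposition \ref{unif h1 and linf} we have $1/C\le a_{i,\beta},b_{i,\beta}\le C$ and, from its proof, $|\mu_{ij,\beta}|,|\nu_{ij,\beta}|\le C$; moreover $P^\perp u_{i,\beta}=u_{i,\beta}-\sum_j\langle u_{i,\beta},\phi_j\rangle_{L^2(\Omega)}\phi_j$ with $\phi_j\in L^\infty(\Omega)$ (and analogously for $Q^\perp v_{i,\beta}$), while $\|u_{j,\beta}\|_{L^\infty},\|v_{j,\beta}\|_{L^\infty}\le C$. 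Hence, dividing by $a_{i,\beta}$, each equation reads
\[
	-\Delta u_{i,\beta}=g_{i,\beta}-\frac{\beta}{a_{i,\beta}}\,u_{i,\beta}\Big(\sum_{j}u_{j,\beta}^2\Big)^{q-1}\Big(\sum_{j}v_{j,\beta}^2\Big)^{q},\qquad \|g_{i,\beta}\|_{L^\infty(\Omega)}\le C,
\]
uniformly in $\beta$, with the analogous identity for the $v_{i,\beta}$; in particular, by Kato's inequality and the sign of the competition term, $-\Delta|u_{i,\beta}|\le C$ and $-\Delta|v_{i,\beta}|\le C$ in $\Omega$. For $\beta$ in a compact subset of $(0,+\infty)$ the competition term is also bounded in $L^\infty$ (by the uniform sup-norm bounds, using $q>1/2$), so standard $L^p$ and Schauder estimates --- interior and up to the boundary, using the smoothness of $\partial\Omega$ and the homogeneous Dirichlet condition --- already yield the claim; it therefore suffices to treat $\beta$ large.

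Next I would assume, for a fixed $\alpha\in(0,1)$, that the bound fails: then along a subsequence $\beta\to+\infty$ there exist $(\mathbf u_\beta,\mathbf v_\beta)\in\Mbeta$ with $L_\beta:=\max_i\big([u_{i,\beta}]_{C^{0,\alpha}(\overline\Omega)}+[v_{i,\beta}]_{C^{0,\alpha}(\overline\Omega)}\big)\to+\infty$, the sup-norm part of the norm being already controlled by Proposition \ref{unif h1 and linf}. As in \cite{NTTV1}, after relabelling, some component --- say $u_{1,\beta}$ --- realises this seminorm up to a factor $2$ at points $x_\beta\ne y_\beta\in\overline\Omega$, and the uniform $L^\infty$ bound forces $r_\beta:=|x_\beta-y_\beta|\to0$. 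I would then rescale around $x_\beta$ at scale $r_\beta$, with the normalisation of \cite{NTTV1, STTZ1} (subtracting centre values and dividing by $L_\beta r_\beta^\alpha$), obtaining functions $\bar u_{i,\beta},\bar v_{i,\beta}$ that vanish at the origin, have $C^{0,\alpha}$-seminorm $\le1$ on every ball, and with the first component not becoming constant in the limit; when $\mathrm{dist}(x_\beta,\partial\Omega)=O(r_\beta)$ the rescaled domains converge to a half-space on whose flat part the limits vanish, and an odd reflection reduces to the case of $\R^N$. In the rescaled equations every lower-order contribution (coming from $g_{i,\beta}$, hence from the multipliers, the projections and the coefficients) carries a vanishing factor, and $-\Delta|\bar u_{i,\beta}|\le o(1)$; passing to a locally uniform limit by Ascoli-Arzel\`a, one obtains an entire configuration $(\bar{\mathbf U},\bar{\mathbf V})$ with $|\bar{\mathbf U}(z)|+|\bar{\mathbf V}(z)|\le C(1+|z|^\alpha)$, with $\bar U_1$ non-constant, and with $|\bar{\mathbf U}|,|\bar{\mathbf V}|$ subharmonic.

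To conclude I would distinguish two cases. If the rescaled competition term vanishes in the limit, then every $\bar U_i$ and $\bar V_i$ is harmonic on $\R^N$ with growth $o(|z|)$, hence constant by the Liouville theorem for harmonic functions --- contradicting that $\bar U_1$ is non-constant. Otherwise, I would use the uniform interaction bound $\beta\int_\Omega\big(\sum_i u_{i,\beta}^2\big)^q\big(\sum_i v_{i,\beta}^2\big)^q\le C$ of Proposition \ref{unif h1 and linf} --- together with the observation that on a ball where $|\mathbf u_\beta|$ and $|\mathbf v_\beta|$ both stay bounded away from $0$ the competition term would be of order $\beta$, in contradiction with $-\Delta|u_{i,\beta}|\le C$ --- to show that the limit is \emph{segregated}: $|\bar{\mathbf U}|\cdot|\bar{\mathbf V}|\equiv0$, each $\bar U_i$ is harmonic in $\{\sum_l\bar U_l^2>0\}$, each $\bar V_i$ in $\{\sum_l\bar V_l^2>0\}$, and the rescaled functions converge strongly in $H^1_\loc$; then the Alt-Caffarelli-Friedman monotonicity formula applied to the pair $(|\bar{\mathbf U}|,|\bar{\mathbf V}|)$ --- the map
\[
	R\longmapsto \frac{1}{R^4}\left(\int_{B_R}\frac{\bigl|\nabla|\bar{\mathbf U}|\bigr|^2}{|z|^{N-2}}\,dz\right)\left(\int_{B_R}\frac{\bigl|\nabla|\bar{\mathbf V}|\bigr|^2}{|z|^{N-2}}\,dz\right)
\]
being monotone nondecreasing in $R$ yet tending to $0$ as $R\to+\infty$ because of the sublinear growth --- forces one of the two moduli to vanish identically, hence the other to be constant, again a contradiction. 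I expect this classification of the blow-up limit to be the main obstacle: since the interaction $\big(\sum_j u_j^2\big)^{q-1}\big(\sum_j v_j^2\big)^q$ is of group type with variable exponent, rather than of the Gross-Pitaevskii form of \cite{NTTV1} or the exact form of \cite{STTZ1}, the a priori estimates on the interaction density and its gradient, the identification of the limiting free boundary and equations, and the local $H^1$ compactness must be re-established in this setting --- and it is precisely here that the hypothesis $1/2<q<2^*/4$ is used, as it makes the nonlinearity continuous up to its zero set and keeps the interaction energy subcritical, hence controllable through the Sobolev embedding $H^1_0(\Omega)\hookrightarrow L^{2^*}(\Omega)$, so that the blow-up analysis and the Liouville-type theorems of \cite{NTTV1, STZ1} carry over. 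The remaining modifications --- uniformly bounded and coercive coefficients $a_{i,\beta},b_{i,\beta}$, and uniformly $L^\infty$-bounded lower-order terms $P^\perp u_{i,\beta},Q^\perp v_{i,\beta}$ --- are routine.
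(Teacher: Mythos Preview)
Your overall architecture --- contradiction, blow-up at the scale $r_\beta=|x_\beta-y_\beta|$, and a Liouville-type classification --- is exactly the paper's. The gap is in the blow-up normalisation and in what it actually buys you.

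You propose to subtract centre values so that $\bar u_{i,\beta}(0)=0$ and local boundedness is automatic from the $C^{0,\alpha}$ seminorm. But the competition term in the rescaled equation does \emph{not} see the shifted functions: writing $\hat u_{i,\beta}(x)=u_{i,\beta}(x_\beta+r_\beta x)/(L_\beta r_\beta^\alpha)$ for the unshifted rescaling, the nonlinearity is $M_\beta\,\hat u_{i,\beta}\big(\sum_j\hat u_{j,\beta}^2\big)^{q-1}\big(\sum_j\hat v_{j,\beta}^2\big)^q$, and the constants $\hat u_{i,\beta}(0)=u_{i,\beta}(x_\beta)/(L_\beta r_\beta^\alpha)$ may well diverge. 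Without control on $d_\beta:=\sum_i\hat u_{i,\beta}^2(0)$ and $e_\beta:=\sum_i\hat v_{i,\beta}^2(0)$ you cannot pass to a limit in the equation, you cannot say the limit is segregated, and your subharmonicity claim for $|\bar{\mathbf U}|,|\bar{\mathbf V}|$ (which is a statement about the \emph{unshifted} limit) has no content. This is precisely the issue the paper isolates: it does \emph{not} subtract, and devotes a separate lemma to proving that $d_\beta,e_\beta$ are bounded. The argument is a four-case analysis (both unbounded; one bounded and the other not, with two subcases) which hinges on an integral estimate of the form
\[
M_\beta\int_{B_R}\Big(\sum_j\hat u_{j,\beta}^2\Big)^{q}\Big(\sum_j\hat v_{j,\beta}^2\Big)^{q}\le C(R)\,\min\Big(\sum_j\|\hat u_{j,\beta}\|_{L^\infty(B_{2R})},\sum_j\|\hat v_{j,\beta}\|_{L^\infty(B_{2R})}\Big),
\]
obtained by testing the equation against $\hat u_{i,\beta}$ and comparing boundary averages of $\sum_j\hat u_{j,\beta}^2$ on $\partial B_R$ and $\partial B_{2R}$ via the H\"older bound. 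This estimate is what allows one, in each of the four cases, to either force $M_\beta\to0$ or to show that the competition term converges to a constant, leading each time to an entire equation that violates a Liouville theorem. This whole step is absent from your sketch and cannot be bypassed by the subtraction trick.

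Once boundedness of $(\hat{\mathbf u}_\beta,\hat{\mathbf v}_\beta)$ is in hand, your ACF route and the paper's Almgren route diverge. The paper treats the three regimes $M_\beta\to0$, $M_\beta$ bounded, $M_\beta\to\infty$ separately; for the last one it passes a Pohozaev identity to the limit (using the strong $H^1_\loc$ convergence and the vanishing of $M_\beta\int_{B_r}(\sum\hat u^2)^q(\sum\hat v^2)^q$) and invokes an Almgren-type monotonicity to rule out a non-constant limit with growth $\alpha<1$. Your ACF alternative on the pair $(|\bar{\mathbf U}|,|\bar{\mathbf V}|)$ is a legitimate substitute \emph{after} the boundedness lemma, since then the unshifted limits exist, are segregated, and have subharmonic moduli; but as written it is hanging on the missing step above.
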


The proof is based on a contradiction argument, to which we dedicate the rest of this subsection. Let us assume that, for some $\alpha < 1$, there exists a sequence of solutions $(\mathbf{u}_{n}, \mathbf{v}_{n})$ whose $\alpha$-H\"older quotient is not bounded. Since the function $(\mathbf{u}_{\beta}, \mathbf{v}_{\beta})$ are smooth for $\beta$ bounded, it follows that necessarily $\beta_n \to + \infty$ and that there exists a sequence of points $(x_n, y_n) \in \bar \Omega \times \Omega$ such that
\begin{multline*}
    L_n := \max_{i,j = 1, \dots, k} \left\{ \max_{x,y \in \bar\Omega} \frac{|u_{i,n}(x)-u_{i,n}(y)|}{|x-y|^{\alpha}}, \max_{x,y \in \bar\Omega} \frac{|v_{i,n}y(x)-v_{i,n}(y)|}{|x-y|^{\alpha}} \right\} \\ = \max_{i,j = 1, \dots, k} \left\{ \frac{|u_{i,n}(x_n)-u_{i,n}(y_n)|}{|x_n-y_n|^{\alpha}}, \frac{|v_{i,n}(x_n)-v_{i,n}(y_n)|}{|x_n-y_n|^{\alpha}} \right\} \to \infty.
\end{multline*}
Letting $r_n = |x_n - y_n| \to 0$, we introduce a new family of functions, which are rescaled versions of $(\mathbf{u}_n,\mathbf{v}_n)$. Namely, for any $i = 1, \dots, k$, we let
\[
    \bar u_{i,n} := \frac{1}{L_n r_n^{\alpha}} u_{i,n}(x_n + r_n x), \qquad \bar v_{i,n} := \frac{1}{L_n r_n^{\alpha}} v_{i,n}(x_n + r_n x)
\]
for $x \in \Omega_n = \frac{\Omega - x_n}{r_n}$. From the definition, we observe that the functions $(\mathbf{\bar u}_{n}, \mathbf{\bar v}_{n})$, although they may not be uniformly bounded in $0$ for instance, they have uniformly bounded H\"older quotient of exponent $\alpha$ and moreover for each $n$ there exists a component in $(\mathbf{\bar u}_{n}, \mathbf{\bar v}_{n})$ whose oscillation in $B_1$ is equal to 1, that is
\begin{multline*}
    \max_{i,j = 1, \dots, k} \left\{ \max_{x,y \in \bar \Omega_n} \frac{|\bar u_{i,n}(x)-\bar u_{i,n}(y)|}{|x-y|^{\alpha}}, \max_{x,y \in \bar \Omega_n} \frac{|\bar v_{i,n}(x)-\bar v_{i,n}(y)|}{|x-y|^{\alpha}} \right\} \\ = \max_{i,j = 1, \dots, k} \left\{ \left|\bar u_{i,n}(0)-\bar u_{i,n}\left(\frac{y_n-x_n}{r_n}\right)\right| , \left|\bar v_{i,n}(0)-\bar v_{i,n}\left(\frac{y_n-x_n}{r_n}\right)\right| \right\} = 1.
\end{multline*}
Without loss of generality, we assume that
\begin{equation}\label{eq:oscillation=1}
     \left|\bar u_{1,n}(0)-\bar u_{1,n}\left(\frac{y_n-x_n}{r_n}\right)\right| = 1.
\end{equation}
Finally, a direct computation shows that $(\mathbf{\bar u}_{n}, \mathbf{\bar v}_{n})$ solves
\begin{equation}\label{syst_aux_n}
    \begin{cases}
    -a_{i,n}\Delta \bar u_{i,n} = \varepsilon_{i,n} - M_n \bar u_{i,n} \left(\sum_{j=1}^k \bar u_{j,n}^2\right)^{q-1}\left(\sum_{j=1}^k \bar v_{j,n}^2\right)^q \\
    -b_{i,n}\Delta \bar v_{i,n} = \delta_{i,n} - M_n \bar v_{i,n} \left(\sum_{j=1}^k \bar v_{j,n}^2\right)^{q-1} \left(\sum_{j=1}^k \bar u_{j,n}^2\right)^{q}
    \end{cases} \text{ in $\Omega_n$,}
\end{equation}
where the competition parameter is $M_n = \beta_n L_n^{4q-2} r_n^{2\alpha(2q-1)+2}$, and

\begin{equation}\label{sys bu}
    \begin{split}
        \varepsilon_{i,n}(x) &= r_n^{2-\alpha} L_n^{-1} \left(
-u_{i,\beta_n} + \sum_{j=1}^k \langle u_{i,\beta_n},\phi_j\rangle_{L^2(\Omega)} \phi_j +\sum_{j=1}^k \mu_{ij,\beta_n} u_{j,\beta_n}\right)(x_n + r_n x) \to 0\\
        \delta_{i,n}(x) &= r_n^{2-\alpha} L_n^{-1} \left( -v_{i,\beta_n} + \sum_{j=1}^k \langle v_{i,\beta_n},\psi_j\rangle_{L^2(\Omega)} \psi_j +\sum_{j=1}^k \nu_{ij,\beta_n} v_{j,\beta_n} \right)\left(x_n+r_n x\right) \to 0
    \end{split}
\end{equation}
uniformly in $\bar \Omega_n$ by Proposition \ref{unif h1 and linf} and since $\phi_j,\psi_j\in L^\infty(\Omega)$ for every $j$.

We now split the rest of the contradiction argument into several lemmas. 

\begin{lemma}\label{lem 0 bound}
The functions in $(\mathbf{\bar u}_{n}, \mathbf{\bar v}_{n})$ are uniformly locally bounded in $C^{0,\alpha}(\Omega_n)$. In particular, both
\[
    d_n := \sum_{i=1}^{k} \bar u_{i,n}^2(0) \qquad \text{and} \qquad e_n := \sum_{i=1}^{k} \bar v_{i,n}^2(0)
\]
are bounded uniformly.
\end{lemma}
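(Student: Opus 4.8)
The plan is to exploit the normalization built into the rescaled functions together with the competition equation \eqref{syst_aux_n}, following the now-standard strategy for uniform Hölder bounds in strongly competing systems (cf.\ \cite{NTTV1, STTZ1}). First I would record the basic structural facts that come directly from the definitions: the functions $\bar u_{i,n}$ and $\bar v_{i,n}$ have $\alpha$-Hölder quotient bounded by $1$ on $\Omega_n$, the coefficients $a_{i,n}, b_{i,n}$ lie in a compact subinterval of $(0,+\infty)$ by \eqref{eq:unif h1 and linf_2}, and the right-hand side terms $\varepsilon_{i,n}, \delta_{i,n}$ tend to $0$ uniformly on compact sets by \eqref{sys bu} and Proposition \ref{unif h1 and linf}. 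The key tension to be resolved is that the $\bar u_{i,n}$ need not be bounded at the origin, only to have controlled oscillation; the claim is precisely that this forces the values at $0$ to be bounded, which is what makes a genuine local uniform $C^{0,\alpha}$ bound possible.

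The heart of the argument is the following dichotomy, which I would carry out by contradiction. Suppose, along a further subsequence, that $d_n + e_n \to +\infty$; say $d_n = \sum_i \bar u_{i,n}^2(0)$ is unbounded (the case of $e_n$ being symmetric). Since the Hölder quotient of each component is at most $1$, on a fixed ball $B_R(0) \subset \Omega_n$ (which holds eventually, as $\Omega_n$ invades $\R^N$) we have $\sum_i \bar u_{i,n}^2 \geq d_n - C_R d_n^{1/2} \geq \tfrac12 d_n$ for $n$ large, where $C_R$ depends only on $R$ and $k$; in particular $\min_{B_R} \sum_i \bar u_{i,n}^2 \to +\infty$. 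On the other hand, by \eqref{eq:oscillation=1} the component $\bar u_{1,n}$ has oscillation exactly $1$ between $0$ and $(y_n-x_n)/r_n$, a point on $\partial B_1$, so $\bar u_{1,n}$ is \emph{not} nearly constant near the origin; combined with the equation for $\bar v_{i,n}$, which carries the factor $M_n (\sum_j \bar u_{j,n}^2)^q$, the large mass of $\mathbf{\bar u}_n$ would force the $\bar v_{i,n}$ to be uniformly small on $B_R$ (testing the equation against a cutoff, using that $-b_{i,n}\Delta \bar v_{i,n} + (\text{large positive potential})\,\bar v_{i,n}$ is essentially zero plus $\delta_{i,n}\to 0$, hence $\bar v_{i,n}\to 0$ locally uniformly by a maximum-principle / Moser-type estimate). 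Symmetrically, either the $\bar v$-mass also blows up, in which case \emph{both} the $u$ and $v$ potentials are large and one gets a contradiction from the Hölder normalization on both sides, or it stays bounded, in which case after subtracting the value at $0$ we can pass to the limit: $\hat u_{1,n} := \bar u_{1,n} - \bar u_{1,n}(0)$ converges locally uniformly (by Ascoli–Arzelà, using the uniform Hölder bound) to an entire $\alpha$-Hölder function $\hat u_1$ with oscillation $1$ on $\overline{B_1}$, while the normalized product term and the vanishing right-hand side force $\hat u_1$ to be harmonic on all of $\R^N$ — and a globally Hölder (hence sublinear) harmonic function is constant, contradicting $\mathrm{osc}_{\overline{B_1}} \hat u_1 = 1$. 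Either way we reach a contradiction, so $d_n$ and $e_n$ are bounded. Once $d_n, e_n$ are bounded, the uniform Hölder quotient upgrades the pointwise bound at $0$ to a genuine uniform $L^\infty$ bound of $(\mathbf{\bar u}_n, \mathbf{\bar v}_n)$ on every fixed ball, and hence a uniform $C^{0,\alpha}(\Omega_n)$ bound on compact sets, proving the Lemma.

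The main obstacle I anticipate is handling the ``both masses blow up'' branch cleanly and making rigorous the claim that a large competition potential on one side forces the other side to vanish locally uniformly, \emph{uniformly in $n$}, given that here the competition term $(\sum u_j^2)^{q-1}(\sum v_j^2)^q$ has the nonstandard mixed homogeneity $q \in (1/2, 2^*/4)$ rather than the pure-quadratic form treated in \cite{STTZ1}; one must check that the rescaling exponents in $M_n = \beta_n L_n^{4q-2} r_n^{2\alpha(2q-1)+2}$ are exactly those that keep the equation scale-balanced and that $M_n$ (times the relevant mass) does not degenerate to $0$ — if it did, no clearing effect would be available and the argument for boundedness at $0$ would fail. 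This is precisely the point the authors flag when they say the result is ``not present in this form in the literature due to a different form of the competition term,'' so I would expect the detailed verification there to be the delicate part, whereas the harmonic-limit / Liouville step and the Ascoli–Arzelà passage are routine.
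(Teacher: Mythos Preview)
Your outline has the right overall architecture (contradiction on $d_n+e_n\to\infty$, pass to a limit after subtracting the value at $0$, invoke a Liouville-type theorem), but the mechanism you propose for controlling the competition term is not the one that actually works, and this creates a genuine gap.

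The central issue is this. You argue that if $d_n\to\infty$ then the potential $M_n(\sum_j \bar u_{j,n}^2)^q$ in the $\bar v$-equation is large, which should force $\bar v_{i,n}\to 0$ locally by a maximum-principle argument; then you want the competition term in the $\bar u_{1,n}$-equation to vanish in the limit so that $\hat u_1$ is harmonic. Both steps are problematic. First, nothing prevents $M_n\to 0$; in fact, in the paper's proof the integral estimate (their Lemma~\ref{lem int est}) is used precisely to show that $M_n\to 0$ in several of the cases, so the ``large potential forces decay'' heuristic is not available. Second, even if you succeed in showing $\sum_j\bar v_{j,n}^2\to 0$ locally, the competition term in the $\bar u_{1,n}$-equation is
\[
M_n\,\bar u_{1,n}\Big(\sum_j \bar u_{j,n}^2\Big)^{q-1}\Big(\sum_j \bar v_{j,n}^2\Big)^q,
\]
and here the $u$-factors blow up (since $d_n\to\infty$) while $M_n$ is unknown: the product $0\cdot\infty$ is undetermined, so you cannot conclude harmonicity of the limit.

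What the paper does instead is prove a direct integral bound on the \emph{full} interaction,
\[
M_n\int_{B_R}\Big(\sum_j \bar u_{j,n}^2\Big)^{q}\Big(\sum_j \bar v_{j,n}^2\Big)^q \le C(R)\min\Big(\sum_j\|\bar u_{j,n}\|_{L^\infty(B_{2R})},\ \sum_j\|\bar v_{j,n}\|_{L^\infty(B_{2R})}\Big),
\]
obtained by testing the equation for $\bar u_{i,n}$ (resp.\ $\bar v_{i,n}$) against $\bar u_{i,n}$ itself and integrating a derivative of spherical averages. This estimate simultaneously controls $M_n$, the $u$-mass and the $v$-mass, and it is exactly what lets one (i) conclude $M_n\to 0$ in the ``both blow up'' case and identify the limiting constant $\Lambda$ so that $\hat u_1$ solves $-a_1\Delta w=-\Lambda$ rather than $-\Delta w=0$, and (ii) in the other cases, show that the competition term in the $\bar u_{1,n}$-equation integrates to zero against test functions despite the blow-up of the $u$-factors. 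Your ``both blow up $\Rightarrow$ contradiction from H\"older normalization on both sides'' is too vague to replace this; and note that the limiting equation there is \emph{not} Laplace's equation, so a different Liouville argument is needed (polynomial growth of $w+\Lambda|x|^2/(2N)$ vs.\ global H\"older continuity). Finally, the symmetry you invoke between ``$d_n$ unbounded'' and ``$e_n$ unbounded'' is broken by the normalization \eqref{eq:oscillation=1}, which pins the oscillation on $\bar u_{1,n}$: the case $d_n$ bounded, $e_n$ unbounded (the paper's Case~2) needs its own treatment, via a Liouville theorem for the system $-a_i\Delta w_i=-\Lambda\,w_i(\sum_j w_j^2)^{q-1}$ from \cite{STTZ1}, not the harmonic Liouville theorem.
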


We adapt the proof of \cite[Lemma 6.10]{STZ1} to our present context, which is based an a contradiction argument. We need an integral estimate on the size of the competition term. First of all we observe that if either $\{d_n\}$ or $\{e_n\}$ is unbounded, then necessarily $\Omega_n \to \R^n$ by the uniform estimate on the H\"older quotients of the blow-up sequence and since $\mathbf{u}_n=\mathbf{v}_n=0$ on $\partial \Omega_n$. In particular, we may assume that for any $x \in \R^n$ and $R > 0$, $B_R(x) \subset \Omega_n$ for any $n$ sufficiently large.

\begin{lemma}\label{lem int est}
Assume that either $d_n \to +\infty$ or $e_n \to +\infty$. For any $R > 0$ there exists $C(R) \geq 0$ such that for any $x \in \R^N$ and $n$ large enough
\[
	M_n \int_{B_R(x) } \left(\sum_{j=1}^k \bar u_{j,n}^2\right)^q \left(\sum_{j=1}^k \bar v_{j,n}^2\right)^q \leq C(R) \min\left(\sum_{j=1}^{k} \|\bar u_{j,n}\|_{L^\infty(B_{2R})}, \sum_{j=1}^{k} \|\bar v_{j,n}\|_{L^\infty(B_{2R})} \right).
\]
\end{lemma}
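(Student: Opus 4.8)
The plan is to test, for each $i=1,\dots,k$, the $i$-th equation of each of the two subsystems in \eqref{syst_aux_n} against the corresponding component multiplied by a cut-off function, exploiting that the competition term enters with the favourable (absorbing) sign; by running the estimate once on the $\bar u$-system and once on the $\bar v$-system one obtains the two bounds whose minimum is the claimed one.

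Fix $R>0$ and $x\in\R^N$. By the observation preceding the statement (the hypothesis $d_n\to+\infty$ or $e_n\to+\infty$ forces $\Omega_n\to\R^N$), we have $B_{2R}(x)\subset\Omega_n$ for $n$ large. Choose $\eta\in C^\infty_c(B_{2R}(x))$ with $0\le\eta\le1$, $\eta\equiv1$ on $B_R(x)$, $|\nabla\eta|\le C_N/R$ and $|D^2\eta|\le C_N/R^2$; then $\int_{\R^N}\Delta\eta=0$ and $\int_{\R^N}|\Delta\eta|\le C_N R^{N-2}$. Writing $A_n:=\sum_j\bar u_{j,n}^2$ and $B_n:=\sum_j\bar v_{j,n}^2$, I would test the $i$-th equation of the first subsystem of \eqref{syst_aux_n} with $\bar u_{i,n}\eta$, sum over $i$, use $\sum_i\bar u_{i,n}\bigl(\bar u_{i,n}A_n^{q-1}B_n^q\bigr)=A_n^qB_n^q$ and the integration by parts $\sum_i a_{i,n}\int\bar u_{i,n}\nabla\bar u_{i,n}\cdot\nabla\eta=-\tfrac12\sum_i a_{i,n}\int\bar u_{i,n}^2\Delta\eta$ (legitimate since for fixed $n$ the functions are smooth), and discard the nonnegative term $\sum_i a_{i,n}\int|\nabla\bar u_{i,n}|^2\eta$, to arrive at
\[
M_n\int_{B_R(x)}A_n^qB_n^q\ \le\ M_n\int A_n^qB_n^q\,\eta\ \le\ \sum_{i=1}^k\int|\varepsilon_{i,n}|\,|\bar u_{i,n}|\,\eta\ +\ \frac12\sum_{i=1}^k a_{i,n}\Bigl|\int\bar u_{i,n}^2\,\Delta\eta\Bigr|.
\]

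The first sum is harmless: $\int|\varepsilon_{i,n}||\bar u_{i,n}|\eta\le\|\varepsilon_{i,n}\|_{L^\infty(B_{2R}(x))}|B_{2R}|\,\|\bar u_{i,n}\|_{L^\infty(B_{2R}(x))}\le C_N R^N\|\bar u_{i,n}\|_{L^\infty(B_{2R}(x))}$ as soon as $n$ is large enough that $\|\varepsilon_{i,n}\|_{L^\infty(\Omega_n)}\le1$ (recall \eqref{sys bu}). The crucial — and only delicate — point is the second sum: estimating it naively by $\|\bar u_{i,n}\|_{L^\infty}^2\int|\Delta\eta|$ would yield a useless quadratic dependence on the sup-norm. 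Instead I would use $\int\Delta\eta=0$: with $c_{i,n}:=\bar u_{i,n}(x)$,
\[
\Bigl|\int\bar u_{i,n}^2\,\Delta\eta\Bigr|=\Bigl|\int(\bar u_{i,n}^2-c_{i,n}^2)\,\Delta\eta\Bigr|\le\int_{B_{2R}(x)}|\bar u_{i,n}-c_{i,n}|\,|\bar u_{i,n}+c_{i,n}|\,|\Delta\eta|,
\]
and then invoke the uniform bound (which is $\le1$ by construction of the blow-up) on the $\alpha$-H\"older quotient of $\bar u_{i,n}$, giving $|\bar u_{i,n}(y)-c_{i,n}|\le|y-x|^\alpha\le(2R)^\alpha$ on $B_{2R}(x)$, together with $|\bar u_{i,n}+c_{i,n}|\le2\|\bar u_{i,n}\|_{L^\infty(B_{2R}(x))}$. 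This makes the estimate linear: $\bigl|\int\bar u_{i,n}^2\Delta\eta\bigr|\le C_N R^{N-2+\alpha}\|\bar u_{i,n}\|_{L^\infty(B_{2R}(x))}$. Recalling $a_{i,n}\le C$ uniformly (Proposition \ref{unif h1 and linf} and \eqref{eq:unif h1 and linf_2}), I conclude
\[
M_n\int_{B_R(x)}A_n^qB_n^q\ \le\ C(R)\sum_{i=1}^k\|\bar u_{i,n}\|_{L^\infty(B_{2R}(x))},
\]
with $C(R)$ depending only on $N,k,\alpha,R$ and the constant in \eqref{eq:unif h1 and linf_2}. Repeating the same computation on the second subsystem of \eqref{syst_aux_n} (testing with $\bar v_{i,n}\eta$, using \eqref{sys bu} and $b_{i,n}\le C$) gives symmetrically $M_n\int_{B_R(x)}A_n^qB_n^q\le C(R)\sum_i\|\bar v_{i,n}\|_{L^\infty(B_{2R}(x))}$, and taking the minimum of the two bounds proves the lemma. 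I expect the only genuine obstacle to be the passage from the quadratic to the linear bound on the second sum, resolved precisely by the interplay between $\int\Delta\eta=0$ and the uniform control of the H\"older quotients of the blow-up sequence; the rest is routine.
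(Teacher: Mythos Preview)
Your proof is correct. Both your argument and the paper's hinge on the same key observation---that the uniform H\"older bound on the blow-up lets one replace a quadratic dependence on the sup-norm by a linear one via $a^2-b^2=(a-b)(a+b)$---but the implementations differ. The paper tests the equation with $\bar u_{i,n}$ on the full ball $B_r$, obtaining the boundary term $\tfrac12\int_{\partial B_r}\partial_\nu\bigl(\sum_j\bar u_{j,n}^2\bigr)$, rewrites it as $\tfrac r2\,\frac{d}{dr}$ of the spherical average of $\sum_j\bar u_{j,n}^2$, and then integrates in $r$ over $[R,2R]$; the difference of spherical averages at $r=R$ and $r=2R$ is what gets linearised by the H\"older bound. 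You instead insert a cut-off $\eta$ and push all derivatives onto it, producing $\int\bar u_{i,n}^2\,\Delta\eta$, and linearise by subtracting the constant $\bar u_{i,n}(x)^2$ thanks to $\int\Delta\eta=0$. Your route is a bit more direct---it avoids the radial-integration step and the $\min_{s\in[R,2R]}I(s)$ bound---while the paper's version makes the connection to the monotonicity-type quantity $I(r)$ explicit. Both yield exactly the same estimate with constants depending only on $N,k,\alpha,R$ and the uniform bound on $a_{i,n},b_{i,n}$.
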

\begin{proof}
The proof follows verify closely the proof of \cite[Lemma 6.10]{STZ1}, thus we provide here and a sketch of it in the case $x = 0$. We consider the system \eqref{syst_aux_n}. Multiplying the equation in $\bar{u}_{j,n}$ by $\bar{u}_{j,n}$, integrating by parts in $B_R(0) $ and summing over $j$, we find
\[
	\begin{split}
  I(R) &:= \frac{1}{R^{N-2} } \int_{B_R }  a_{j,n}|\nabla \bar{u}_{j,n}|^2 - \sum_{j=1}^{k} \eps_{j,n}\bar{u}_{j,n} + M_n \left(\sum_{j=1}^k \bar u_{j,n}^2\right)^{q}\left(\sum_{j=1}^k \bar v_{j,n}^2\right)^q \, dx \\
  &= \frac{1}{R^{N-2}} \int_{\partial B_R } \sum_{j=1}^{k} \bar{u}_{j,n} \partial_{\nu} \bar{u}_{j,n} = \frac{1}{2R^{N-2}} \int_{\partial B_R } \partial_{\nu} \left(\sum_{j=1}^{k} \bar{u}_{j,n}^2 \right) = \frac{R}{2} \frac{d}{dR} \left( \frac{1}{R^{N-1}}\int_{\partial B_R }\sum_{j=1}^{k} \bar{u}_{j,n}^2\right).
  \end{split}
\] 
Exploiting the uniform H\"older bounds of the blow-up sequence we have
\begin{multline*}
    \int_{R}^{2R} \frac{2}{r} I(r) = \frac{1}{(2R)^{N-1}} \int_{\partial B_{2R} } \left(\sum_{j=1}^{k} \bar{u}_{j,n}^2\right) - \frac{1}{R^{N-1}} \int_{\partial B_R } \left(\sum_{j=1}^{k} \bar{u}_{j,n}^2\right) \\
     = \int_{\partial B_1} \sum_{j=1}^k \left( \bar u_{j,n}^2(2Rx) - \bar{u}_{j,n}^2(Rx) \right)
    = \int_{\partial B_1} \sum_{j=1}^k \left( \bar{u}_{j,n}(2Rx) - \bar{u}_{j,n}(Rx) \right)  \left( \bar{u}_{j,n}(2Rx) + \bar{u}_{j,n}(Rx) \right) \\
    \leq C(R) \left(\sum_{j=1}^k \|\bar{u}_{j,n}\|_{L^\infty(B_{2R})}\right).
\end{multline*}
On the other hand, taking also \eqref{eq:unif h1 and linf_2} into account, we can bound the same integral term from below as follows.
\[
\begin{split}
    \int_{R}^{2R} \frac{2}{r} I(r) &\geq \min_{s \in [R,2R]} I(s) \geq \frac{1}{R^{N-2}} \left( \frac{M_n}{C 2^{N-1}} \int_{B_R } \left(\sum_{j=1}^k \bar u_{j,n}^2\right)^{q}\left(\sum_{j=1}^k \bar v_{j,n}^2\right)^q - \int_{ B_{2R}}  \sum_{j=1}^{k} |\eps_{j,n}|  |\bar{u}_{j,n}| \right)\\
    &\geq C(R) \left(M_n \int_{B_R } \left(\sum_{j=1}^k \bar u_{j,n}^2\right)^{q}\left(\sum_{j=1}^k \bar v_{j,n}^2\right)^q  -  \max_{j=1,\dots,k} \|\eps_{j,n}\|_{L^{\infty}} \left( \sum_{j=1}^{k}\|\bar u_{j,n}\|_{L^\infty(B_{2R})}\right)\right).
\end{split}
\]
We can reach an analogous conclusion by taking into account the equations satisfied by $\bar{\mathbf{v}}_n$. The conclusion follows by joining the two estimates together with \eqref{sys bu}.
\end{proof}

\begin{proof}[Proof of Lemma \ref{lem 0 bound}] 
To prove the result we argue by contradiction, excluding different possibilities for the sequences $\{d_n\}$ and $\{e_n\}$. Specifically we show that the assumption that the one of these two sequences is unbounded is incompatible with the uniform H\"older bounds of 
the blow-up sequence.

\noindent\textbf{Case 1.} We start by excluding the case in which both sequences $d_n$ and $e_n$ are unbounded. Exploiting the uniform bounds of the $C^{0,\alpha}$-seminorm of $\bar{\mathbf{u}}_n$ and $\bar{\mathbf{v}}_n$ we find from Lemma \ref{lem int est} that for some $R>0$ there exists $\bar n$ such that if $n\geq \bar n$ then
\begin{multline*}
	\frac14 M_n \sum_{j=1}^{k} |\bar u_{j,n}|(0) \left(\sum_{j=1}^k \bar u_{j,n}^2(0)\right)^{q-1} \left(\sum_{j=1}^k \bar v_{j,n}^2(0)\right)^q 
	\leq \frac12 M_n \frac{ \left(\sum_{j=1}^k \bar u_{j,n}^2(0)\right)^q \left(\sum_{j=1}^k \bar v_{j,n}^2(0)\right)^q }{ \left( 1 + \sum_{j=1}^{k} |\bar u_{j,n}|(0) \right) } \\
	\leq M_n \frac{ \left(\sum_{j=1}^k ( \bar u_{j,n}(0) - R^\alpha)^2\right)^q \left(\sum_{j=1}^k (\bar v_{j,n}(0)-R^\alpha)^2\right)^q }{ \left( 1 + \sum_{j=1}^{k} |\bar u_{j,n}|(0) \right) } \\ \times \frac12 \left(\frac{ \sum_{j=1}^k \bar u_{j,n}^2(0)}{\sum_{j=1}^k ( \bar u_{j,n}(0) - R^\alpha)^2} \right)^q \left( \frac{ \sum_{j=1}^k \bar v_{j,n}^2(0) }{ \sum_{j=1}^k (\bar v_{j,n}(0)-R^\alpha)^2} \right)^q \\
	\leq  M_n \frac{ \int_{B_R(0) } \left(\sum_{j=1}^k \bar u_{j,n}^2\right)^q 
\left(\sum_{j=1}^k \bar v_{j,n}^2\right)^q }{ |B_R(0) | \left( 1 + \sum_{j=1}^{k} |\bar u_{j,n}|(0) \right) } \leq C(R).
\end{multline*}
In particular, since $d_n,e_n\to +\infty$, we obtain that in this case $M_n \to 0$. Moreover there exists $\Lambda \in \R$ such that
\[
	M_n \bar u_{1,n}(x) \left(\sum_{j=1}^k \bar u_{j,n}^2(x)\right)^{q-1} \left(\sum_{j=1}^k \bar v_{j,n}^2(x)\right)^q \to \Lambda
\]
uniformly in any compact set of $\Omega_n$. Indeed for any $K \subset \R^n$
\[
    \begin{split}
        M_n &\sup_{y \in K }\left| \bar u_{1,n}(0) \left(\sum_{j=1}^k \bar u_{j,n}^2(0)\right)^{q-1} \left(\sum_{j=1}^k \bar v_{j,n}^2(0)\right)^q - \bar u_{1,n}(y) \left(\sum_{j=1}^k \bar u_{j,n}^2(y)\right)^{q-1} \left(\sum_{j=1}^k \bar v_{j,n}^2(y)\right)^q\right| \\
        \leq &M_n \sup_{y \in K }\left| \bar u_{1,n}(0) - \bar u_{1,n}(y) \right| \left(\sum_{j=1}^k \bar u_{j,n}^2(0)\right)^{q-1} \left(\sum_{j=1}^k \bar v_{j,n}^2(0)\right)^q \\
        &+ M_n \sup_{y \in K }| \bar u_{1,n}(y)| \left| \left(\sum_{j=1}^k \bar u_{j,n}^2(0)\right)^{q-1} - \left(\sum_{j=1}^k \bar u_{j,n}^2(y)\right)^{q-1} \right| \left(\sum_{j=1}^k \bar v_{j,n}^2(0)\right)^q \\
    &+ M_n \sup_{y \in K } |\bar u_{1,n}(y)| \left(\sum_{j=1}^k \bar u_{j,n}^2(y)\right)^{q-1} \left| \left(\sum_{j=1}^k \bar v_{j,n}^2(0)\right)^q - \left(\sum_{j=1}^k \bar v_{j,n}^2(y)\right)^q\right| 
        \end{split}
    \]
    \[
    \begin{split}
    \phantom{M_n} \leq &M_n \sup_{y \in K }\left| 1 - \frac{\bar u_{1,n}(y)}{\bar u_{1,n}(0)} \right| \left| \bar u_{1,n}(0)\right| \left(\sum_{j=1}^k \bar u_{j,n}^2(0)\right)^{q-1} \left(\sum_{j=1}^k \bar v_{j,n}^2(0)\right)^q \\
        &+ M_n \sup_{y \in K } \left| 1 - \frac{\left(\sum_{j=1}^k \bar u_{j,n}^2(y)\right)^{q-1}}{\left(\sum_{j=1}^k \bar u_{j,n}^2(0)\right)^{q-1}} \right| \left| \frac{\bar u_{1,n}(y)}{\bar u_{1,n}(0)} \right|  | \bar u_{1,n}(0)| \left(\sum_{j=1}^k \bar u_{j,n}^2(0)\right)^{q-1}  \left(\sum_{j=1}^k \bar v_{j,n}^2(0)\right)^q \\
    &+ M_n \sup_{y \in K } |\bar u_{1,n}(0)| \left(\sum_{j=1}^k \bar u_{j,n}^2(0)\right)^{q-1} \left(\sum_{j=1}^k \bar v_{j,n}^2(0)\right)^q \\ 
    &\hspace{5cm}\times \left| 1 - \frac{\left(\sum_{j=1}^k \bar v_{j,n}^2(y)\right)^q}{\left(\sum_{j=1}^k \bar v_{j,n}^2(0)\right)^q}\right| \left| \frac{\left(\sum_{j=1}^k \bar u_{j,n}^2(y)\right)^{q-1}}{\left(\sum_{j=1}^k \bar u_{j,n}^2(0)\right)^{q-1}} \right| \left| \frac{\bar u_{1,n}(y)}{\bar u_{1,n}(0)} \right| \\
    \leq  & C(R) \sup_{y \in K }\left| 1 - \frac{\bar u_{1,n}(y)}{\bar u_{1,n}(0)} \right| + C(R) \sup_{y \in K } \left| 1 - \frac{\left(\sum_{j=1}^k \bar u_{j,n}^2(y)\right)^{q-1}}{\left(\sum_{j=1}^k \bar u_{j,n}^2(0)\right)^{q-1}} \right| \left| \frac{\bar u_{1,n}(y)}{\bar u_{1,n}(0)} \right| \\
    &+C(R)\sup_{y \in K }\left| 1 - \frac{\left(\sum_{j=1}^k \bar v_{j,n}^2(y)\right)^q}{\left(\sum_{j=1}^k \bar v_{j,n}^2(0)\right)^q}\right| \left| \frac{\left(\sum_{j=1}^k \bar u_{j,n}^2(y)\right)^{q-1}}{\left(\sum_{j=1}^k \bar u_{j,n}^2(0)\right)^{q-1}} \right| \left| \frac{\bar u_{1,n}(y)}{\bar u_{1,n}(0)} \right|  \to 0.
    \end{split}
\]
We introduce now an auxiliary sequence of functions by letting $w_n := \bar u_{1,n} - \bar u_{1,n}(0)$. The sequence $\{w_n\}$ is uniformly bounded in $C^{0,\alpha}_{\loc}$ and, up to striking out a subsequence, there exists $w \in C^{0,\alpha}_\loc(\R^n)$ such that $w_n \to w$ locally uniformly (and in $C^{0,\gamma}_\loc(\R^n)$ for any $\gamma \in (0,\alpha)$), $w$ is globally H\"older continuous of exponent $\alpha<1$, $w$ is not constant and it solves the equation (for $a_i:=\lim a_{1,n})$
\[
	-a_1\Delta w = -\Lambda \qquad \text{in $\R^n$},
\]
a contradiction. Indeed $w = h + \Lambda/(2n) |x|^2$ where $h$ is harmonic which grows at most quadratically (since $|h(x)| \leq  \Lambda/(2n) |x|^2 + |w(x)|$), thus $h$ is a harmonic polynomial of degree at most $2$, but since $w$ is globally H\"older continuous this implies that $h(x) \sim - \Lambda/(2n) |x|^2$ for $|x|\to +\infty$, which is impossible.

\noindent\textbf{Case 2.} We exclude the case in which the sequence $\{d_n\}$ is bounded while $\{e_n\}$ is unbounded. Observe that, in this case, the sequence $\{\bar{\mathbf{u}}_{n}\}$ is uniformly bounded in $C^{0,\alpha}_{\loc}$ and, up to striking out a subsequence, there exists a vector $\mathbf{w} \in C^{0,\alpha}(\R^n)$ such that $\bar{\mathbf{u}}_n \to \mathbf{w}$ locally uniformly, $\mathbf{w}$ is globally H\"older continuous of exponent $\alpha$, at least its first component $w_1$ is not constant by \eqref{eq:oscillation=1}. Since at least $w_1$ is not identically $0$ in $B_1$, we can again exploit Lemma \ref{lem int est} in order to conclude that there exist $R>0$ small and constants $C, C'>0$ such that
\[
	\begin{split}
	M_n \left(\sum_{j=1}^k \bar v_{j,n}^2(0)\right)^q &= M_n \left(\sum_{j=1}^k \bar v_{j,n}^2(0)\right)^q \frac{ M_n \int_{B_R(x) } \left(\sum_{j=1}^k \bar u_{j,n}^2\right)^q \left(\sum_{j=1}^k \bar v_{j,n}^2\right)^q  }{ M_n \int_{B_R(x) } \left(\sum_{j=1}^k \bar u_{j,n}^2\right)^q \left(\sum_{j=1}^k \bar v_{j,n}^2\right)^q } \\
	&= \frac{ M_n \int_{B_R(x) } \left(\sum_{j=1}^k \bar u_{j,n}^2\right)^q \left(\sum_{j=1}^k \bar v_{j,n}^2\right)^q  }{\int_{B_R(x) } \left(\sum_{j=1}^k \bar u_{j,n}^2\right)^q \frac{ \left(\sum_{j=1}^k \bar v_{j,n}^2\right)^q}{ \left(\sum_{j=1}^k \bar v_{j,n}^2(0)\right)^q } } \leq 2 \frac{ C }{ \int_{B_R(x) } \left(\bar u_{1,n}^2\right)^q } \leq C'.
	\end{split}
\]
Thus $M_n \to 0$ bounded and there exists a constant $\Lambda \geq 0$ such that
\[
	M_n \left(\sum_{j=1}^k \bar v_{j,n}^2(x)\right)^q \to \Lambda
\]
uniformly on compact subsets of $\R^n$. We conclude that $\mathbf{w}$ has at least one component (its first one) not constant and it solves
\[
	-a_i\Delta w_{i} = - \Lambda w_{i}(x) \left(\sum_{j=1}^k w_{j}^2(x)\right)^{q-1}
\]
a contradiction by applying  \cite[Lemma A.3]{STTZ1} to $|w_i|$.

\noindent\textbf{Case 3.} Similarly, we now exclude the possibility $\{d_n\}$ is unbounded, $\{e_n\}$ is bounded and there exists $x \in \R^n$ and $C$ such that $e_n(x) \geq C > 0$. Indeed, as in the previous case we find that there exists $C>0$ such that
\[
	M_n \left(\sum_{j=1}^k \bar u_{j,n}^2(x)\right)^q \leq C
\]
thus $M_n \to 0$ and there exists $\Lambda$
\[
    M_n \left(\sum_{j=1}^k \bar u_{j,n}^2\right)^q \to \Lambda.
\]
Then, by assumption the sequence $\{\bar{\mathbf{v}}_{n}\}$ is uniformly bounded in $C^{0,\alpha}_{\loc}$ and, up to striking out a subsequence, there exists a vector $\mathbf{z} \in C^{0,\alpha}(\R^n)$ such that $\bar{\mathbf{v}}_n \to \mathbf{z}$ locally uniformly, $\mathbf{z}$ is globally H\"older continuous of exponent $\alpha$, at least one component of $\mathbf{z}$ is not zero and it solves
\[
	-b_i\Delta z_{i} = - \Lambda z_{i}(x) \left(\sum_{j=1}^k z_{j}^2(x)\right)^{q-1}
\]
which implies that $\Lambda = 0$ (and $\mathbf{z}$ constant). But then letting $w_n := \bar u_{1,n} - \bar u_{1,n}(0)$, then $\{w_n\}$ is uniformly bounded in $C^{0,\alpha}_{\loc}$ and, up to striking out a subsequence, there exists $w \in C^{0,\alpha}_\loc(\R^n)$ such that $w_n \to w$ locally uniformly, $w$ is globally H\"older continuous of exponent $\alpha<1$, $w$ is not constant and it solves
\[
	-\Delta w = 0
\]
in contradiction with the classical theorem by Liouville on entire harmonic functions.

\noindent\textbf{Case 4.} Thus we need to exclude the case $\{d_n\}$ is unbounded but $\{e_n\}$ is bounded and $e_n(x) \to 0$ locally uniformly. Again by Lemma \ref{lem int est} we find that for any $x \in \Omega_n$ and $R>0$ we have
\[
	M_n \int_{B_R } \left(\sum_{j=1}^k \bar u_{j,n}^2\right)^{q}\left(\sum_{j=1}^k \bar v_{j,n}^2\right)^q \leq C(R) \sum_{j=1}^{k} \|\bar v_{j,n}\|_{L^\infty(B_{2R})} \to 0.
\]
Let $\eta \in C^{\infty}_0(\R^n)$ be any test function. By multiplying the equation in $\bar{u}_{1,n}$ by $\eta$ and integrating by parts we find
\begin{multline*}
 	\int \nabla \bar{u}_{1,n} \nabla \eta = \int \eps_{1,n} \eta -  M_n \bar u_{j,n} \eta \left(\sum_{j=1}^k \bar u_{j,n}^2\right)^{q-1}\left(\sum_{j=1}^k \bar v_{j,n}^2\right)^q \\
 	\leq \|\eps_{1,n}\|_{L^\infty} \|\eta \|_{L^1(\R^n)} \eta +  M_n \|\eta\|_{L^\infty(\R^n)} \int_{B_R} \left(\sum_{j=1}^k \bar u_{j,n}^2\right)^{q}\left(\sum_{j=1}^k \bar v_{j,n}^2\right)^q \to 0
\end{multline*}
for any $R>0$ such that $\supp \eta \subset B_R$. Letting once more $w_n := \bar u_{1,n} - \bar u_{1,n}(0)$, the sequence $\{w_n\}$ is uniformly bounded in $C^{0,\alpha}_{\loc}$ and, up to striking out a subsequence, there exists $w \in C^{0,\alpha}_\loc(\R^n)$ such that $w_n \to w$ locally uniformly (and in $C^{0,\gamma}_\loc(\R^n)$ for any $\gamma \in (0,\alpha)$), $w$ is globally H\"older continuous of exponent $\alpha$, $w$ is not constant and it solves the equation
\[
		-\Delta w = 0 \qquad \text{in $\R^n$}
\]
a contradiction.
\end{proof}

As a consequence of the previous result, we have that, up to striking out a subsequence, the sequence $\{ (\mathbf{\bar u}_n, \mathbf{\bar v}_n) \}_{n \in \N}$ converges in $C^{0,\gamma}_\loc$ for any $\gamma < \alpha$ to some limiting entire profile $(\mathbf{\bar u}, \mathbf{\bar v}) \in C^{0,\alpha}$. Reasoning as in \cite[pp. 293--294]{NTTV1} we have the following.
\begin{lemma}
The convergence of (a subsequence of) $(\mathbf{\bar u}_n, \mathbf{\bar v}_n)$ to its limit $(\mathbf{\bar u}, \mathbf{\bar v})$ is also strong in $H^1_\loc(\R^N)$.
\end{lemma}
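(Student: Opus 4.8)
The plan is to adapt the classical strong-convergence scheme for strongly competing systems (as in \cite{NTTV1}) to the group structure of \eqref{syst_aux_n}, the key observation being that the competition term becomes sign-definite once one sums over a whole group. Throughout I would use cut-off functions $\eta\in C^\infty_0(\R^N)$ and exploit the information already at hand: the uniform $L^\infty_{\loc}$ bounds on $(\mathbf{\bar u}_n,\mathbf{\bar v}_n)$ provided by Lemma \ref{lem 0 bound} together with the uniform H\"older seminorm bounds of the blow-up sequence, and the $C^{0,\gamma}_{\loc}$ convergence to $(\mathbf{\bar u},\mathbf{\bar v})$.

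First I would establish uniform $H^1_{\loc}$ bounds, hence (along a subsequence) weak $H^1_{\loc}$ convergence. Testing the $i$-th $\mathbf{u}$-equation in \eqref{syst_aux_n} by $\eta^2\bar u_{i,n}$ and summing over $i=1,\dots,k$, the competition contribution is exactly $-M_n\int\eta^2\big(\sum_j\bar u_{j,n}^2\big)^q\big(\sum_j\bar v_{j,n}^2\big)^q\le 0$ and may be discarded; combined with \eqref{eq:unif h1 and linf_2} and the $L^\infty_{\loc}$ bound this gives $\int_K|\nabla\bar u_{i,n}|^2\le C(K)$, and reading the same identity the other way, the uniform estimate $M_n\int_K\big(\sum_j\bar u_{j,n}^2\big)^q\big(\sum_j\bar v_{j,n}^2\big)^q\le C(K)$, with the symmetric statement for $\mathbf{v}$.

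The heart of the matter is to show that the competition term contributes nothing to the limiting Dirichlet energy. If $\{M_n\}$ stays bounded along a subsequence this is immediate: since $|\bar u_{i,n}|\big(\sum_j\bar u_{j,n}^2\big)^{q-1}\le\big(\sum_j\bar u_{j,n}^2\big)^{q-1/2}$, which is uniformly bounded because $q>1/2$, the right-hand sides of \eqref{syst_aux_n} are bounded in $L^\infty_{\loc}$, so $(\mathbf{\bar u}_n,\mathbf{\bar v}_n)$ is bounded in $W^{2,p}_{\loc}$ and precompact in $H^1_{\loc}$. Otherwise $M_n\to+\infty$, and the $L^1_{\loc}$ bound of the previous step forces $\big(\sum_j\bar u_j^2\big)\big(\sum_j\bar v_j^2\big)=0$ a.e., so on the open set $\Omega^u:=\{\sum_j\bar u_j^2>0\}$ one has $\mathbf{\bar v}\equiv 0$. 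Fixing $K\Subset\Omega^u$ and $\zeta\in C^\infty_0(\Omega^u)$ with $\zeta\equiv 1$ on $K$, on $\supp\zeta$ one has $\sum_j\bar u_{j,n}^2\ge c>0$ for $n$ large while $\bar v_{l,n}\to 0$ uniformly there; testing the $\mathbf{v}$-equations by $\zeta^2\bar v_{l,n}$ and summing then gives $M_n\int\zeta^2\big(\sum_j\bar v_{j,n}^2\big)^q\big(\sum_j\bar u_{j,n}^2\big)^q\to 0$, hence $M_n\int_K\big(\sum_j\bar v_{j,n}^2\big)^q\to 0$. Combined once more with the bound on $|\bar u_{i,n}|\big(\sum_j\bar u_{j,n}^2\big)^{q-1}$, this shows the competition term in the $\bar u_{i,n}$-equation tends to $0$ in $L^1_{\loc}(\Omega^u)$, so passing to the limit in \eqref{syst_aux_n} yields that each $\bar u_i$ is harmonic in $\Omega^u$; being continuous and vanishing outside $\Omega^u$, it belongs to $H^1_0(\Omega^u)$.

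Finally I would close the argument exactly as in the scalar case. Summing the $\mathbf{u}$-equations tested by $\eta^2\bar u_{i,n}$ over $i$ and discarding the non-positive competition term, all remaining terms converge thanks to the uniform $C^0_{\loc}$ convergence and \eqref{eq:unif h1 and linf_2}, which gives
\[
\limsup_{n}\sum_{i=1}^k a_{i,n}\int\eta^2|\nabla\bar u_{i,n}|^2 \le \sum_{i=1}^k a_i\int\bar u_i^2\big(|\nabla\eta|^2+\eta\Delta\eta\big) = \sum_{i=1}^k a_i\int\eta^2|\nabla\bar u_i|^2,
\]
the last identity following by testing the equation $\Delta\bar u_i=0$ in $\Omega^u$ with $\eta^2\bar u_i\in H^1_0(\Omega^u)$. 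Weak lower semicontinuity of the Dirichlet integral yields the reverse inequality for the $\liminf$, so $\sum_i a_{i,n}\int\eta^2|\nabla\bar u_{i,n}|^2\to\sum_i a_i\int\eta^2|\nabla\bar u_i|^2$; since each $\int\eta^2|\nabla\bar u_{i,n}|^2$ has $\liminf$ at least $\int\eta^2|\nabla\bar u_i|^2$ and $a_i>0$, an elementary argument forces each individual Dirichlet integral to converge, i.e.\ $\bar u_{i,n}\to\bar u_i$ in $H^1_{\loc}$; exchanging the roles of $\mathbf{u}$ and $\mathbf{v}$ gives the same for $\mathbf{v}$, and since the $H^1_{\loc}$ limit necessarily coincides with the $C^{0}_{\loc}$ limit, the whole extracted subsequence converges strongly. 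The main obstacle is the third step, removing the competition contribution on the positivity set, and it is precisely there that the structural restriction $q>1/2$ enters, to keep $|\bar u_{i,n}|\big(\sum_j\bar u_{j,n}^2\big)^{q-1}$ bounded.
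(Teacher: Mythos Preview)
Your argument is correct and follows precisely the scheme that the paper invokes by citing \cite[pp.~293--294]{NTTV1}: test the summed equations with $\eta^2\bar u_{i,n}$, discard the sign-definite competition term to bound the $\limsup$ of the Dirichlet energy, and match it with the $\liminf$ from weak lower semicontinuity via the limiting harmonic equation on $\Omega^u$. The only step you pass over quickly is the assertion $\eta^2\bar u_i\in H^1_0(\Omega^u)$, which is true but not entirely trivial---it follows from the standard characterization $H^1_0(\Omega^u)=\{w\in H^1(\R^N):\tilde w=0\text{ q.e.\ on }(\Omega^u)^c\}$ together with the continuity of $\bar u_i$.
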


In order to conclude, we have to analyze the following three possible case: $M_n \to 0$, $M_n$ bounded and $M_n \to \infty$.

\begin{lemma}
There exists $C>0$ such that $M_n \geq C$ for all $n$.
\end{lemma}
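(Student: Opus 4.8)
The plan is to argue by contradiction: assume that along a subsequence $M_n\to 0$, and derive a contradiction with the normalization \eqref{eq:oscillation=1}. The heuristic is that when $M_n\to 0$ the competition terms in \eqref{syst_aux_n} disappear in the limit, so the blow-up profiles become harmonic; but a globally $\alpha$-Hölder harmonic function with $\alpha<1$, defined on $\mathbb R^N$ or on a half-space with zero boundary values, must be constant (resp.\ identically zero), which is incompatible with the first component oscillating by exactly $1$ between two points at distance $1$.

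First I would record that the rescaled functions satisfy $[\bar u_{i,n}]_{C^{0,\alpha}(\bar\Omega_n)}\le 1$ and $[\bar v_{i,n}]_{C^{0,\alpha}(\bar\Omega_n)}\le 1$ directly from their definition; combined with Lemma \ref{lem 0 bound} (which yields $d_n,e_n$ bounded) this makes $\bar{\mathbf u}_n,\bar{\mathbf v}_n$ uniformly bounded on every compact set, and hence so are the competition terms $\bar u_{i,n}\big(\sum_j\bar u_{j,n}^2\big)^{q-1}\big(\sum_j\bar v_{j,n}^2\big)^q$ and their analogues for $\bar{\mathbf v}_n$. Since $\varepsilon_{i,n},\delta_{i,n}\to 0$ uniformly by \eqref{sys bu}, $a_{i,n},b_{i,n}\in[1/C,C]$ by \eqref{eq:unif h1 and linf_2}, and $\bar{\mathbf u}_n\to\bar{\mathbf u}$, $\bar{\mathbf v}_n\to\bar{\mathbf v}$ locally uniformly and in $H^1_{\mathrm{loc}}$ by the two preceding lemmas, the assumption $M_n\to 0$ forces the right-hand sides of \eqref{syst_aux_n} to $0$ in $L^\infty_{\mathrm{loc}}$. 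Passing to a further subsequence with $a_{i,n}\to a_i>0$ and $b_{i,n}\to b_i>0$, and testing \eqref{syst_aux_n} against $\eta\in C^\infty_c$, I get $\int\nabla\bar u_i\cdot\nabla\eta=\int\nabla\bar v_i\cdot\nabla\eta=0$, so every component of $\bar{\mathbf u}$ and $\bar{\mathbf v}$ is harmonic on the limit domain.

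Next I would identify the limit domain: since $x_n\in\bar\Omega$ and $r_n\to 0$, up to a rotation $\Omega_n$ converges either to $\mathbb R^N$ or to a half-space $H$, and in the latter case $\bar u_i=\bar v_i=0$ on $\partial H$ because $\bar u_{i,n}=\bar v_{i,n}=0$ on $\partial\Omega_n$ and the convergence is locally uniform. The profile $\bar u_1$ is globally $\alpha$-Hölder with $\alpha<1$ (the bound $[\bar u_{1,n}]_{C^{0,\alpha}}\le 1$ passes to the limit), hence: on $\mathbb R^N$ the interior gradient estimate for harmonic functions gives $|\nabla\bar u_1(x_0)|\le C_N R^{\alpha-1}[\bar u_1]_{C^{0,\alpha}}\to 0$ as $R\to\infty$, so $\bar u_1$ is constant; on $H$ an odd reflection across $\partial H$ extends $\bar u_1$ to an entire harmonic function with the same sublinear growth, hence constant, so $\bar u_1\equiv 0$. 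On the other hand $z_n:=(y_n-x_n)/r_n$ satisfies $|z_n|=1$, so $z_n\to z$ with $|z|=1$ along a further subsequence, and \eqref{eq:oscillation=1} together with the locally uniform convergence yields $|\bar u_1(0)-\bar u_1(z)|=1$, contradicting what was just proved. Therefore no subsequence of $M_n$ tends to $0$; since each $M_n$ is finite and strictly positive, $\inf_n M_n>0$, which is the claim.

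The only delicate point — and it is a mild one here — is that the argument relies on the local boundedness of the blow-up profiles near the origin (so that the limiting PDE and the values $\bar u_1(0),\bar u_1(z)$ are meaningful), which is exactly what Lemma \ref{lem 0 bound} supplies by excluding $d_n,e_n\to\infty$; the remainder is a routine Liouville-type argument. I would also note that if $0$ or $z$ happens to lie on $\partial H$ in the half-space case, the identity $|\bar u_1(0)-\bar u_1(z)|=1$ still exhibits a non-constant (non-zero) profile, so the contradiction persists in that borderline situation as well.
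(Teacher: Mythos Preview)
Your argument is correct and follows essentially the same route as the paper: assume $M_n\to 0$ along a subsequence, use the local uniform bounds (Lemma~\ref{lem 0 bound}) to pass to harmonic limits, and invoke a Liouville-type theorem to force the profiles to be constant, contradicting the unit oscillation \eqref{eq:oscillation=1}. Your version is slightly more detailed than the paper's in that you explicitly treat the half-space case via odd reflection, whereas the paper speaks only of ``entire harmonic functions''; this is a worthwhile addition but not a departure in method.
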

\begin{proof}
Indeed, assume by contradiction that there exists a subsequence in $(\mathbf{\bar u}_n, \mathbf{\bar v}_n)$ for which $M_n \to 0$. Then, from the local uniform convergence of  $(\mathbf{\bar u}_n, \mathbf{\bar v}_n)$ we obtain that the limit $(\mathbf{\bar u}, \mathbf{\bar v})$ is made of entire harmonic functions with bounded $C^{0,\alpha}$ semi-norm. Consequently they all must be constant, in contrast with the limit of the oscillation in $B_1$ of the first component.
\end{proof}

\begin{lemma}
It must be that $\lim_n M_n = +\infty$.
\end{lemma}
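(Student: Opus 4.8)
The plan is to argue by contradiction and rule out the remaining possibility, namely that $\{M_n\}$ stays bounded along a subsequence; combined with the previous lemma, which gives $M_n\geq C>0$, this forces $M_n\to+\infty$.

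\emph{Step 1 (passing to the limit in the finite coupling regime).} Suppose $M_n$ is bounded. Then I would extract a subsequence along which $M_n\to M_\infty\in[C,+\infty)$ and, using \eqref{eq:unif h1 and linf_2}, $a_{i,n}\to a_i$, $b_{i,n}\to b_i$ with $a_i,b_i\in[1/C,C]$. By Lemma~\ref{lem 0 bound} the sequences $\{d_n\}$, $\{e_n\}$ are bounded, so $(\mathbf{\bar u}_n,\mathbf{\bar v}_n)$ is bounded in $C^{0,\alpha}_{\loc}(\R^N;\R^k)^2$ with equibounded value at the origin; together with the strong $H^1_{\loc}(\R^N)$ convergence established above and with $\varepsilon_{i,n},\delta_{i,n}\to 0$ locally uniformly (see \eqref{sys bu}), one may pass to the limit in \eqref{syst_aux_n} and obtain that the limit $(\mathbf{\bar u},\mathbf{\bar v})$ has finite global $\alpha$-H\"older seminorm and is an entire solution of
\[
\begin{cases}
-a_i\Delta\bar u_i=-M_\infty\,\bar u_i\,U^{q-1}V^q,\\
-b_i\Delta\bar v_i=-M_\infty\,\bar v_i\,V^{q-1}U^q,
\end{cases}
\qquad\text{in }\R^N,\quad U:=\sum_{j}\bar u_j^2,\ \ V:=\sum_{j}\bar v_j^2.
\]
Extracting further so that $\xi_n:=(y_n-x_n)/r_n\to\xi\in\partial B_1$ and passing to the limit in \eqref{eq:oscillation=1} gives $|\bar u_1(0)-\bar u_1(\xi)|=1$, so $\bar u_1$ is \emph{not} constant.

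\emph{Step 2 (a Liouville property).} It then suffices to show that any entire solution of the above system with finite global $\alpha$-H\"older seminorm ($\alpha<1$) is constant. Set $\phi:=U^{1/2}$, $\psi:=V^{1/2}$. Since $a_i,b_i>0$, the vectorial Kato inequality shows that $\phi$ and $\psi$ are nonnegative and subharmonic on $\R^N$, that $\phi(x),\psi(x)\leq C(1+|x|^{\alpha})$, and that they satisfy the competitive differential inequalities
\[
\Delta\phi\geq c\,\phi^{2q-1}\psi^{2q},\qquad \Delta\psi\geq c\,\psi^{2q-1}\phi^{2q}\qquad\text{in }\R^N
\]
for some $c=c(M_\infty,C)>0$. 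On one hand, the Caccioppoli inequality for subharmonic functions together with the sublinear bound yields $\int_{B_R}|\nabla\phi|^2,\int_{B_R}|\nabla\psi|^2=O(R^{N-2+2\alpha})$, hence, by a dyadic decomposition, $\frac{1}{R^2}\int_{B_R}\frac{|\nabla\phi|^2}{|x|^{N-2}},\frac{1}{R^2}\int_{B_R}\frac{|\nabla\psi|^2}{|x|^{N-2}}=O(R^{2\alpha-2})\to0$ as $R\to\infty$. On the other hand, an Alt--Caffarelli--Friedman-type monotonicity formula for pairs of subharmonic functions obeying such competitive inequalities (in the spirit of \cite{ACF,NTTV1,STTZ1}) shows that the product of these two quantities is nondecreasing in $R$; hence it is identically zero, which forces at least one of $\phi,\psi$ to be constant. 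If $\psi\equiv\mathrm{const}>0$, the second inequality gives $\phi\equiv0$; if $\psi\equiv0$, then each $\bar u_i$ is harmonic and sublinear, hence constant; the case $\phi\equiv\mathrm{const}$ is symmetric and likewise makes every $\bar u_i$ constant. In all cases $\bar u_1$ is constant, contradicting Step~1. Since no bounded subsequence of $\{M_n\}$ can exist, $M_n\to+\infty$.

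The hard part is the monotonicity formula invoked in Step~2. Unlike in the strong-competition limit $\beta\to+\infty$, here the supports of $\phi$ and $\psi$ genuinely overlap, so the classical ACF formula does not apply verbatim; one must use a generalised ACF-type inequality (or run an Almgren frequency argument on $\phi$ and $\psi$ separately), in which the interaction contribution is absorbed precisely thanks to the differential inequalities $\Delta\phi\geq c\,\phi^{2q-1}\psi^{2q}$ and its analogue. The remaining ingredients — the limiting procedure, the Kato inequalities, and the elementary growth estimates — are routine, and are the points where one reuses the uniform bounds of Propositions~\ref{unif h1 and linf} and \ref{prop:uniform_Holder_bounds} and Lemma~\ref{lem 0 bound}.
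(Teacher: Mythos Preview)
Your proposal is correct and follows the same overall strategy as the paper: assume $M_n$ stays bounded along a subsequence, pass to the limit (using Lemma~\ref{lem 0 bound}, the strong $H^1_{\loc}$ convergence, and \eqref{sys bu}) to obtain a nontrivial entire solution of the coupled system with bounded global $\alpha$-H\"older seminorm, and derive a contradiction via a Liouville theorem. The paper's own proof is a two-line citation to \cite[Claim~2, p.~18]{RamosTavaresTerracini} for exactly this Liouville step, so your Step~2 supplies what the paper outsources.

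The one genuine methodological difference is in how the Liouville theorem is obtained. The argument in \cite{RamosTavaresTerracini} (following \cite{NTTV1}) uses an Almgren frequency function built on the \emph{full vector} $(\mathbf{\bar u},\mathbf{\bar v})$, with numerator including both the Dirichlet energy and the interaction term; one shows $N(r)$ is nondecreasing, $N(\infty)\leq\alpha<1$ from the growth bound, and concludes via a blowdown to segregated homogeneous harmonic profiles of degree $<1$, hence constant. You instead reduce to the scalar moduli $\phi=U^{1/2}$, $\psi=V^{1/2}$ and invoke a competitive ACF-type monotonicity on the pair $(\phi,\psi)$. The Almgren route is directly adapted to the vectorial group structure and works uniformly for all $q\in(1/2,2^*/4)$; the competitive ACF inequality you use is well established for $q=1$ (see e.g.\ \cite{ContiTerraciniVerziniOPP}), but for general $q$ requires a separate verification, as you rightly flag. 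Your aside about running Almgren ``on $\phi$ and $\psi$ separately'' is not how it is done in the references: the frequency is taken on the whole system at once, which is what makes the Pohozaev-type identity close up.
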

\begin{proof}
We may reason as before, assuming that $M_n \to 1$. We then end up with limiting functions $(\mathbf{\bar u}, \mathbf{\bar v})$ which solve
\[
    \begin{cases}
    -a_{i}\Delta \bar u_{i}=  - \bar u_{i} \left(\sum_{j=1}^k \bar u_{j}^2\right)^{q-1} \left(\sum_{j=1}^k \bar v_{j}^2\right)^q \\
    -b_{i}\Delta \bar v_{i}=  - \bar v_{i} \left(\sum_{j=1}^k \bar v_{j}^2\right)^{q-1} \left(\sum_{j=1}^k \bar u_{j}^2\right)^q
    \end{cases} \text{ in $\R^N$,}
\]
and the conclusion follows as in \cite[Claim 2. pag 18]{RamosTavaresTerracini}.
\end{proof}

Finally, let us address the case $M_n \to \infty$. In this case, in order to find a contradiction, we need to ensure the validity of an Almgren-type monotonicity formula for the limit profiles $(\mathbf{\bar u}, \mathbf{\bar v})$. To this end, we let first show the following

\begin{lemma}
For any $x \in \R^N$ and almost every $r > 0$, the following identity holds
\begin{multline*}
    (2 - N) \int_{B_r(x_0)} \sum_{i=1}^{k} \left( a_i |\nabla \bar u_i|^2  + b_i |\nabla \bar v_i|^2\right) + r \int_{\partial B_r(x_0)} \sum_{i=1}^{k}\left( a_i |\nabla \bar u_i|^2  + b_i |\nabla \bar v_i|^2 \right) \\ = 2 r \int_{\partial B_r(x_0)} \sum_{i=1}^{k} \left( a_i (\partial_\nu \bar u_i)^2  + b_i (\partial_\nu \bar v_i)^2 \right).
\end{multline*}
\end{lemma}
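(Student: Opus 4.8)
The statement is the Rellich--Nec\v{a}s (domain variation, or Pohozaev) identity for the limiting segregated profile $(\mathbf{\bar u},\mathbf{\bar v})$. The plan is to prove first the analogous identity \emph{with remainder terms} along the approximating sequence $(\mathbf{\bar u}_n,\mathbf{\bar v}_n)$ solving \eqref{syst_aux_n}, and then to pass to the limit $n\to\infty$, exploiting the strong $H^1_\loc(\R^N)$ convergence $(\mathbf{\bar u}_n,\mathbf{\bar v}_n)\to(\mathbf{\bar u},\mathbf{\bar v})$ established in the previous lemma. For each fixed $n$, elliptic regularity applied to \eqref{syst_aux_n} gives $\bar u_{i,n},\bar v_{i,n}\in W^{2,2}_\loc$, which is enough to justify the computation below.

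First I would multiply the $i$-th equation for $\bar u_{i,n}$ by $(x-x_0)\cdot\nabla\bar u_{i,n}$, integrate over $B_r(x_0)$, and use the pointwise Rellich identity
\[
-\Delta u\,(x-x_0)\!\cdot\!\nabla u=\operatorname{div}\!\Big((x-x_0)\!\cdot\!\nabla u\,\nabla u-\tfrac12(x-x_0)|\nabla u|^2\Big)-\Big(\tfrac N2-1\Big)|\nabla u|^2;
\]
then do the same for each $\bar v_{i,n}$ with the weight $b_{i,n}$, and sum over $i$ and over the two groups. Using $\operatorname{div}(x-x_0)=N$ and $x-x_0=r\nu$ on $\partial B_r(x_0)$, the left-hand side reproduces exactly the three integrals appearing in the statement, with coefficients $a_{i,n},b_{i,n}$; on the right one is left with a remainder coming from \eqref{sys bu} (the terms $\varepsilon_{i,n},\delta_{i,n}$ tested against $(x-x_0)\cdot\nabla\bar u_{i,n}$, resp.\ $(x-x_0)\cdot\nabla\bar v_{i,n}$), together with the competition contribution. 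The crucial observation is that the competition terms telescope: since $\sum_i\bar u_{i,n}(x-x_0)\!\cdot\!\nabla\bar u_{i,n}=\tfrac12(x-x_0)\!\cdot\!\nabla\big(\sum_i\bar u_{i,n}^2\big)$, adding the $\bar u$-- and $\bar v$--competition parts produces precisely a multiple of $M_n\int_{B_r}(x-x_0)\!\cdot\!\nabla W_n$, with $W_n:=\big(\sum_j\bar u_{j,n}^2\big)^q\big(\sum_j\bar v_{j,n}^2\big)^q$, which after one further integration by parts becomes a boundary plus a bulk term in $M_n W_n$.

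Passing to the limit is then routine for most terms: up to a subsequence $a_{i,n}\to a_i$ and $b_{i,n}\to b_i$ by the bounds \eqref{eq:unif h1 and linf_2} of Proposition~\ref{unif h1 and linf}; the remainder vanishes since $\varepsilon_{i,n},\delta_{i,n}\to0$ uniformly on compacts while $\nabla\bar u_{i,n},\nabla\bar v_{i,n}$ stay bounded in $L^2_\loc$; the bulk Dirichlet integrals converge by strong $H^1_\loc$ convergence; and, since that convergence gives $\int_{B_R}|\nabla(\bar u_{i,n}-\bar u_i)|^2\to0$, i.e.\ $\int_0^R\big(\int_{\partial B_\rho}|\nabla(\bar u_{i,n}-\bar u_i)|^2\big)\,d\rho\to0$, along a further subsequence both $\int_{\partial B_r}|\nabla\bar u_{i,n}|^2$ and $\int_{\partial B_r}(\partial_\nu\bar u_{i,n})^2$ (and their $\bar v$-analogues) converge to the corresponding limits for a.e.\ $r$ --- which is exactly why the identity is asserted only for a.e.\ $r$.

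\textbf{The main obstacle} is to show that the competition terms disappear in the limit, i.e.\ $M_n\int_{B_r}W_n\to0$ and $M_n\int_{\partial B_r}W_n\to0$ for a.e.\ $r$. I would argue as in the framework of \cite{NTTV1,STZ1,STTZ1}: (i) in the regime at hand $M_n\to+\infty$ while $M_n\int_{B_R}W_n$ stays bounded (test \eqref{syst_aux_n} by $\bar u_{i,n}$ on $B_R$ and use \eqref{eq:unif h1 and linf_2}), so the limit is \emph{segregated}, $\bar u_i\bar v_j\equiv0$, and thus $W_n\to0$ uniformly on compacts; (ii) on compact subsets of $\{\sum_j\bar u_{j,n}^2\ \text{bounded below}\}$ the differential inequality $-\Delta\big(\tfrac12\sum_i b_{i,n}\bar v_{i,n}^2\big)+c\,M_n\big(\sum_j\bar v_{j,n}^2\big)^q\le\sum_i\bar v_{i,n}\delta_{i,n}$ together with a comparison argument forces $\sum_j\bar v_{j,n}^2\to0$ (with a rate) uniformly on compact subsets of $\{\sum_j\bar u_j^2>0\}$, and symmetrically; (iii) the clean-up of the competition mass concentrating near the common interface, as in \cite[pp.~293--294]{NTTV1} and \cite{STZ1,STTZ1} (the different form of the interaction, $q\neq1$, needs only the same minor adaptation already used in the proof of Lemma~\ref{lem int est}). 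Together (i)--(iii) give $M_nW_n\to0$ in $L^1_\loc$ and control of the boundary integrals for a.e.\ $r$; equivalently, from the equation one reads off $\lim_n M_n\int_{B_r}W_n=\int_{\partial B_r}\sum_i a_i\bar u_i\,\partial_\nu\bar u_i-\int_{B_r}\sum_i a_i|\nabla\bar u_i|^2=\int_{B_r}\sum_i a_i\bar u_i\Delta\bar u_i=0$, the last equality because (by (ii)) each $\bar u_i$ is harmonic on $\{\sum_j\bar u_j^2>0\}$ and vanishes off it. Sending $n\to\infty$ in the approximate identity, with $a_i=\lim a_{i,n}$ and $b_i=\lim b_{i,n}$, then yields precisely the stated identity.
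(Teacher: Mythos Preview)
Your proposal is correct and follows the same overall route as the paper: derive the Pohozaev identity at level $n$ by testing \eqref{syst_aux_n} with $(x-x_0)\cdot\nabla\bar u_{i,n}$ and $(x-x_0)\cdot\nabla\bar v_{i,n}$, note that the competition contributions combine into a total derivative of $W_n=(\sum_j\bar u_{j,n}^2)^q(\sum_j\bar v_{j,n}^2)^q$, and pass to the limit using the strong $H^1_\loc$ convergence and $\varepsilon_{i,n},\delta_{i,n}\to0$.

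The only substantive difference is in how you dispose of the competition remainder $M_n\int_{B_r}W_n$. The paper does this in one stroke: Kato's inequality gives
\[
-\Delta|\bar u_{i,n}|+M_n|\bar u_{i,n}|\Big(\sum_j\bar u_{j,n}^2\Big)^{q-1}\Big(\sum_j\bar v_{j,n}^2\Big)^q\le C,
\]
and multiplying by a smooth cutoff $\eta\in C_0^\infty(B_{3r})$ with $\eta\equiv1$ on $B_r$ (hence no boundary terms) yields $M_n\int_{B_r}|\bar u_{i,n}|(\sum_j\bar u_{j,n}^2)^{q-1}(\sum_j\bar v_{j,n}^2)^q\le C(r)$, and symmetrically for $\bar v$; the vanishing of $M_n\int_{B_r}W_n$ then follows. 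Your route through (i)--(iii) is also valid but more elaborate than needed, and your ``equivalently'' shortcut ($\int_{B_r}\bar u_i\Delta\bar u_i=0$) implicitly requires that $\Delta\bar u_i$ be a Radon measure supported on $\{\bar u_i=0\}$, which is exactly what the Kato--cutoff bound delivers; without it that step is not yet justified.
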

\begin{proof}
The proof follows mainly by a direct computation. For easier notation, let us consider the case $x_0 = 0$. Testing the equation in $(\mathbf{\bar u}_{n}, \mathbf{\bar v}_{n})$ by $(x\cdot \nabla \mathbf{\bar u}_{n}, x\cdot \nabla \mathbf{\bar v}_{n})$ and summing over $i = 1, \dots, k$, we obtain integrating by parts
\begin{multline*}
    \int_{B_r}\sum_{i=1}^{k}\left(-a_{i,n}\Delta \bar u_{i,n}  x \cdot \nabla \bar u_{i,n} - b_{i,n}\Delta \bar v_{i,n} x \cdot \nabla \bar v_{i,n}  \right) \\
     = \left(1-\frac{N}{2}\right) \int_{B_r} \sum_{i=1}^{k} \left( a_{i,n} |\nabla \bar u_{i,n}|^2  + b_i |\nabla \bar v_{i,n}|^2\right)+ \frac{r}{2} \int_{\partial B_r} \sum_{i=1}^{k}\left( a_{i,n} |\nabla \bar u_{i,n}|^2  + b_{i,n} |\nabla \bar v_{i,n}|^2 \right)   \\ - r \int_{\partial B_r} \sum_{i=1}^{k} \left( a_{i,n} (\partial_\nu \bar u_{i,n})^2  + b_{i,n} (\partial_\nu \bar v_{i,n})^2 \right).
\end{multline*}
We observe that, due to the strong $H^1$ convergence, the right hand side of the previous expression passes to the limit for almost every radius $r>0$.
On the other hand, replacing the equation in the left hand side, we find
\begin{multline}\label{eqn pohozaev n}
    M_n\int_{B_r} \sum_{i=1}^{k} \left( \bar u_{i,n}  x\cdot \nabla \bar u_{i,n} \left(\sum_{j=1}^k \bar u_{j,n}^2\right)^{\hspace{-.2em}q-1}\left(\sum_{j=1}^k \bar v_{j,n}^2\right)^{\hspace{-.2em}q} + \bar v_{i,n}  x\cdot \nabla \bar v_{i,n} \left(\sum_{j=1}^k \bar v_{j,n}^2\right)^{\hspace{-.2em}q-1}\left(\sum_{j=1}^k \bar u_{j,n}^2\right)^{\hspace{-.2em}q}\right) \\
    = M_n \frac{1}{2q} \int_{B_r} x \cdot \nabla \left(\sum_{j=1}^k \bar u_{j,n}^2\right)^{q}\left(\sum_{j=1}^k \bar v_{j,n}^2\right)^q \\
    = M_n \frac{N}{4q} \int_{B_r} \left(\sum_{j=1}^k \bar u_{j,n}^2\right)^{q}\left(\sum_{j=1}^k \bar v_{j,n}^2\right)^q - M_n \frac{r}{4q}\int_{\partial B_r} \left(\sum_{j=1}^k \bar u_{j,n}^2\right)^{q}\left(\sum_{j=1}^k \bar v_{j,n}^2\right)^q.
\end{multline}

We now go back to the equations in $(\mathbf{\bar u}_{n}, \mathbf{\bar v}_{n})$. By Kato's inequality we find that there exists a positive constant $C$, independent of $n$, such that
\[
	-\Delta |\bar u_{i,n}|+ M_n |\bar u_{i,n}| \left(\sum_{j=1}^k \bar u_{j,n}^2\right)^{q-1}\left(\sum_{j=1}^k \bar v_{j,n}^2\right)^{q}\leq C
\]
and similarly for $\bar v_{i,n}$. Let $r > 0$ be any fixed radius, we multiply the previous inequality by a smooth cut-function $\eta \in C_0^\infty(B_{3r})$  such that 
\[
	 \begin{cases}
		\eta(x) = 1 &\text{if $|x| \leq r$}\\
		\eta(x) \in (0,1) &\text{if $r < |x| < 3r$}
	\end{cases}, \quad \|\nabla \eta \|_{L^\infty} \leq 1/r.
\]
Integrating by parts yields the estimate
\[
	M_n \int_{B_r} |\bar u_{i,n}| \left(\sum_{j=1}^k \bar u_{j,n}^2\right)^{q-1}\left(\sum_{j=1}^k \bar v_{j,n}^2\right)^{q}, M_n\int_{B_r} |\bar v_{i,n}| \left(\sum_{j=1}^k \bar u_{j,n}^2\right)^{q}\left(\sum_{j=1}^k \bar v_{j,n}^2\right)^{q-1}\leq C(r).
\]
We obtain that
\[
    \lim_{n\to \infty} \int_{B_r}M_n \left(\sum_{j=1}^k \bar u_{j,n}^2\right)^{q}\left(\sum_{j=1}^k \bar v_{j,n}^2\right)^q = 0 \quad \text{for any $r >0$}
\]
and thus, by Fubini's theorem, for almost every radius $r > 0$ the right hand side in \eqref{eqn pohozaev n} vanishes as $n \to +\infty$. Finally, we observe that thanks to the $H^1$ converge of $(\mathbf{\bar u}_n, \mathbf{\bar v}_n)$ and the uniform vanishing of $(\varepsilon_{n}, \delta_n)$ (see eq.\ \ref{sys bu}), we have
\[
    \lim_{n \to +\infty} \int_{B_r} \sum_{i=1}^{k} \left(\varepsilon_{i,n}x \cdot  \nabla \bar u_{i,n} +\delta_{i,n}x \cdot  \nabla \bar v_{i,n}\right) = 0
\]
for every radius $r > 0$. The proof follows recollecting the previous observations.
\end{proof}

We are in position to conclude the uniform regularity result.
\begin{proof}[Proof of Proposition \ref{prop:uniform_Holder_bounds}]
As of now, we have obtained that, if there is no uniform H\"older bound, then necessarily $M_n \to \infty$. From this point on, the conclusion follows exactly as in \cite[Step B. page 19]{RamosTavaresTerracini}.
\end{proof}

\subsection{Conclusion of the proof of Theorem \ref{thm:main_result}}

From the previous results we can completely characterize the limit profiles as $\beta\to \infty$. 

\begin{proposition}[Limit as $\beta\to \infty$]\label{prop_limit_in_p}
Let $(\mathbf{u}_{\beta},\mathbf{v}_{\beta}) \in \Mbeta$. Then
\begin{equation}\label{eq:interactionterm_to0}
\lim_{\beta\to +\infty} \frac{\beta}{q}\int_\Omega \Big(\sum_{j=1}^k u_{j,\beta}^2\Big)^q \Big(\sum_{i=1}^k v_{j,\beta}^2 \Big)^q=0.
\end{equation}
Moreover, there exist $\mathbf{u}=(u_1,\ldots, u_k),\mathbf{v}=(v_1,\ldots, v_k)\in C^{0,\alpha}(\overline{\Omega}; \R^k)\cap H^1_0(\Omega,\R^k)$ such that, up to subsequence:
\begin{enumerate}
\item $\mathbf{u}_{\beta}\to \mathbf{u}$, $\mathbf{v}_{\beta}\to \mathbf{v}$  as $\beta\to +\infty$, strongly in $ H^1_0(\Omega,\R^k)$ and in $C^{0,\alpha}(\overline{\Omega},\R^k)$ for every $\alpha\in (0,1)$.
\item $u_{i}\cdot v_{j}= 0$ in $\Omega$ for every $i,j=1,\ldots, k$, and 
\[
(O_1,O_2):=\left( \left\{|\mathbf{u}|>0\right\},\left\{|\mathbf{v}|>0\right\}\right)\in \mathcal{P}_2(\Omega);
\]
\item $\mathbf{u},\mathbf{v}\in \Sigma(L^2)$;
\item we have
\begin{align*}
    \int_\Omega \nabla u_{i}\cdot \nabla u_{j} + (P^\perp u_i) (P^\perp u_j) &=\int_\Omega \nabla v_{i}\cdot \nabla v_{j} + (Q^\perp v_i) (Q^\perp v_j) =0& \forall i\neq j  \\
    \int_\Omega |\nabla u_{i}|^2 + (P^\perp {u}_i)^2  \leq \int_\Omega |\nabla u_{j}|^2 + (P^\perp u_j)^2 &, \; \int_\Omega |\nabla v_{i}|^2 + (Q^\perp v_i)^2 \leq \int_\Omega |\nabla v_{j}|^2 + (Q^\perp v_j)^2 & \forall i\leq j. 
\end{align*}
\end{enumerate}
As a consequence we have
\[
	\lim_{\beta \to +\infty} E_{\beta}(\mathbf{u}_\beta, \mathbf{v}_\beta) = \widetilde c.
\]
\end{proposition}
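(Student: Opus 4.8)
The plan is to pass to the limit in the family $\Mbeta$ using the uniform bounds of Propositions~\ref{unif h1 and linf} and~\ref{prop:uniform_Holder_bounds}, identify the limit as a segregated configuration, use the energy to pin it down, and upgrade weak to strong convergence. First, given any $\beta_n\to+\infty$ with $(\mathbf u_{\beta_n},\mathbf v_{\beta_n})\in\Mbeta$, the uniform $H^1_0$, $L^\infty$ and $C^{0,\alpha}$ bounds (the latter for every $\alpha\in(0,1)$) give, along a subsequence, $\mathbf u_{\beta_n}\rightharpoonup\mathbf u$, $\mathbf v_{\beta_n}\rightharpoonup\mathbf v$ weakly in $H^1_0(\Omega;\R^k)$ and, by Arzelà--Ascoli and interpolation, strongly in $C^{0,\alpha}(\overline\Omega;\R^k)$ for every $\alpha\in(0,1)$ and in every $L^r(\Omega;\R^k)$, with $\mathbf u,\mathbf v\in C^{0,\alpha}(\overline\Omega;\R^k)\cap H^1_0(\Omega;\R^k)$. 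The $L^2$-orthonormality constraint passes to the $L^2$-limit, so $\mathbf u,\mathbf v\in\Sigma(L^2)$ (item (3)), in particular $\mathbf u,\mathbf v\not\equiv0$. For item (2): Proposition~\ref{unif h1 and linf} gives $\int_\Omega(\sum_iu_{i,\beta}^2)^q(\sum_iv_{i,\beta}^2)^q\le Cq/\beta\to0$, so the (continuous, nonnegative) limiting integrand vanishes identically, i.e.\ $u_iv_j\equiv0$ in $\Omega$. Hence $O_1:=\{x\in\Omega:|\mathbf u(x)|>0\}$, $O_2:=\{x\in\Omega:|\mathbf v(x)|>0\}$ are open and disjoint (if $u_i(x)\neq0$ then $v_j(x)=0$ for all $j$), so $(O_1,O_2)\in\Peh_2(\Omega)$ and $u_l\in\widetilde H^1_0(O_1)$, $v_l\in\widetilde H^1_0(O_2)$ for each $l$.

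Next comes the energy identification. Recall $E_{\beta}(\mathbf u_\beta,\mathbf v_\beta)=c_\beta\le\widetilde c$ (Proposition~\ref{unif h1 and linf}) and that $E_\beta$ increases with $\beta$ on $\Sigma(L^2)$, so $c_\beta$ increases to some $c_\infty\le\widetilde c$. Write $I_\beta:=\tfrac{\beta}{q}\int_\Omega\big(\sum_iu_{i,\beta}^2\big)^q\big(\sum_iv_{i,\beta}^2\big)^q\ge0$ and $F_\beta:=F(\varphi(M(\mathbf u_\beta)),\varphi(N(\mathbf v_\beta)))$, so $F_\beta+I_\beta=c_\beta$. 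The key claim is $\liminf_{\beta\to+\infty}F_\beta\ge\widetilde c$. To see this, take any subsequence along which $F_\beta\to\liminf F_\beta$ and (after the compactness above) $\mathbf u_\beta\rightharpoonup\mathbf u$, $\mathbf v_\beta\rightharpoonup\mathbf v$; the $\Mbeta$-normalization \eqref{eq:orthogonality}--\eqref{eq:monotonicity} makes $M(\mathbf u_\beta)=\diag(a^0_{1,\beta},\ldots,a^0_{k,\beta})$ with $a^0_{l,\beta}:=\int_\Omega|\nabla u_{l,\beta}|^2+(P^\perp u_{l,\beta})^2$ nondecreasing in $l$, so its $j$-th eigenvalue is a maximum over the \emph{fixed} space $\mathrm{span}(e_1,\ldots,e_j)\subset\R^k$:
\[
a^0_{j,\beta}=\max_{v\in\mathrm{span}(e_1,\ldots,e_j),\ |v|=1}\Big(\int_\Omega\Big|\nabla\textstyle\sum_lv_lu_{l,\beta}\Big|^2+\Big|P^\perp\textstyle\sum_lv_lu_{l,\beta}\Big|^2\Big).
\]
Weak $H^1$ lower semicontinuity of the Dirichlet term ($L^2$-continuity of $P^\perp$ for the other), together with $\liminf_\beta\max_v\ge\max_v\liminf_\beta$, gives $\liminf_\beta a^0_{j,\beta}\ge\max\{\int_\Omega|\nabla w|^2:w\in\mathrm{span}(u_1,\ldots,u_j),\ \|w\|_{L^2}=1\}\ge\widetilde\lambda_j(O_1)$ by Courant--Fischer (the $u_l$ being $L^2$-orthonormal in $\widetilde H^1_0(O_1)$), and likewise with $b^0_{l,\beta}$ and $O_2$. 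Passing to a further subsequence with $a^0_{l,\beta}\to\hat a_l\ge\widetilde\lambda_l(O_1)$, $b^0_{l,\beta}\to\hat b_l\ge\widetilde\lambda_l(O_2)$, monotonicity of $F$ and $\varphi$ yields $\lim F_\beta=F(\varphi(\hat a),\varphi(\hat b))\ge F(\varphi(\widetilde\lambda_1(O_1),\ldots),\varphi(\widetilde\lambda_1(O_2),\ldots))\ge\widetilde c$, because $(O_1,O_2)$ is admissible in the minimization defining $\widetilde c$. Since the subsequence was arbitrary, $\liminf_{\beta}F_\beta\ge\widetilde c$; combined with $I_\beta\ge0$ and $c_\infty\le\widetilde c$ this gives $c_\infty=\widetilde c$, hence $\limsup_\beta I_\beta\le c_\infty-\liminf_\beta F_\beta\le0$, which is \eqref{eq:interactionterm_to0}, and $F_\beta\to\widetilde c$. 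In particular $\lim_\beta E_{\beta}(\mathbf u_\beta,\mathbf v_\beta)=c_\infty=\widetilde c$, the concluding statement.

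It remains to upgrade to strong $H^1$ convergence (item (1)) and to deduce (4). Along the converging subsequence of the first paragraph one has $F(\varphi(\hat a),\varphi(\hat b))=\widetilde c\le F(\varphi(\widetilde\lambda_1(O_1),\ldots),\varphi(\widetilde\lambda_1(O_2),\ldots))$ with $\hat a_l\ge\widetilde\lambda_l(O_1)$, $\hat b_l\ge\widetilde\lambda_l(O_2)$, so the \emph{strict} monotonicity in (H1) forces $\hat a_l=\widetilde\lambda_l(O_1)$, $\hat b_l=\widetilde\lambda_l(O_2)$ for all $l$ (thus $(O_1,O_2)$ is itself a minimizer). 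On the other hand, weak lower semicontinuity with $v=e_l$ gives $M(\mathbf u)_{ll}\le\liminf_\beta a^0_{l,\beta}=\hat a_l$, while $\sum_lM(\mathbf u)_{ll}=\sum_l\int_\Omega|\nabla u_l|^2+\sum_l\|P^\perp u_l\|_{L^2}^2\ge\sum_l\widetilde\lambda_l(O_1)=\sum_l\hat a_l$ (the eigenvalues of the Gram matrix $(\int_\Omega\nabla u_i\cdot\nabla u_l)$ dominate the $\widetilde\lambda_l(O_1)$ by Courant--Fischer). Comparing, $M(\mathbf u)_{ll}=\hat a_l$ for all $l$ (and incidentally $P^\perp u_l\equiv0$), so $\int_\Omega|\nabla u_{l,\beta}|^2=a^0_{l,\beta}-\|P^\perp u_{l,\beta}\|_{L^2}^2\to\hat a_l-\|P^\perp u_l\|_{L^2}^2=\int_\Omega|\nabla u_l|^2$; with weak convergence this gives $\mathbf u_\beta\to\mathbf u$ strongly in $H^1_0(\Omega;\R^k)$, and similarly $\mathbf v_\beta\to\mathbf v$. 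Finally, strong $H^1$ and $L^2$ convergence make $M(\mathbf u_\beta)\to M(\mathbf u)$, $N(\mathbf v_\beta)\to N(\mathbf v)$ entrywise, so \eqref{eq:orthogonality} and \eqref{eq:monotonicity} pass to the limit, giving (4).

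I expect the main obstacle to be the lower bound $\liminf_\beta F_\beta\ge\widetilde c$: only weak $H^1$ compactness is available a priori, so the off-diagonal entries of $M(\mathbf u_\beta)$ and $N(\mathbf v_\beta)$ do not pass to the limit and one cannot invoke lower semicontinuity of $F\circ\varphi$ directly; the remedy is to use the $\Mbeta$-normalization to write each eigenvalue as a maximum over a fixed subspace and then recognize the generalized eigenvalues $\widetilde\lambda_j(O_1)$, $\widetilde\lambda_j(O_2)$ via Courant--Fischer. The second delicate point is that this bound has to be bootstrapped, through the strict monotonicity in (H1) and a trace comparison, into the convergence $\int_\Omega|\nabla u_{l,\beta}|^2\to\int_\Omega|\nabla u_l|^2$, which is precisely what retroactively legitimizes passing the full matrices $M$ and $N$ to the limit.
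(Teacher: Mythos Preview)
Your proof is correct, but it follows a genuinely different route from the paper's. The paper argues via the Euler--Lagrange system: it applies Kato's inequality to \eqref{eq:equation_for_u_p_v_p} to obtain the sharper bound $\beta\int_\Omega |u_{i,\beta}|\big(\sum_j u_{j,\beta}^2\big)^{q-1}\big(\sum_j v_{j,\beta}^2\big)^q\le C$ (and its companion for $v$), combines this with the uniform segregation $|\mathbf u_\beta||\mathbf v_\beta|\to 0$ coming from item~(2) to deduce \eqref{eq:interactionterm_to0}, and then tests the equation of $u_{i,\beta}$ against $u_{i,\beta}-u_i$ to upgrade weak to strong $H^1_0$ convergence. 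By contrast, your argument is purely variational: you squeeze $I_\beta\to 0$ out of $F_\beta+I_\beta=c_\beta\le\widetilde c$ by establishing $\liminf_\beta F_\beta\ge\widetilde c$ through the diagonal normalization \eqref{eq:orthogonality}--\eqref{eq:monotonicity}, weak lower semicontinuity over a \emph{fixed} subspace of $\R^k$, and Courant--Fischer; you then invoke the \emph{strict} monotonicity (H1) and a trace comparison of the limiting Gram matrix against $\sum_l\widetilde\lambda_l(O_1)$ to force convergence of the Dirichlet norms.

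What each buys: the paper's PDE route is more economical here---it does not need (H1) at this stage and produces the Kato-type integral bound, which is reused anyway in the Pohozaev identity of Proposition~\ref{prop:limit_in_p}. Your variational route avoids touching the equation entirely and is more robust to changes in the competition term; moreover it already yields $P^\perp u_l\equiv 0$ and $\hat a_l=\widetilde\lambda_l(O_1)$, which the paper only derives later in the proof of Theorem~\ref{thm:main_result} (see \eqref{eq:chain_of_identities}). In effect you have merged part of that final argument into the proof of the proposition.
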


\begin{proof}
We only sketch the proof of these results, referring to \cite[p. 294]{NTTV1} for a complete and detailed proof. Recall the uniform bounds in Propositions \ref{unif h1 and linf} and \ref{prop:uniform_Holder_bounds}. Since $C^{0,\alpha}(\overline \Omega) \hookrightarrow C^{0,\gamma}(\overline \Omega)$ is a compact embedding whenever $0<\gamma<\alpha<1$, we have (up to a subsequence)
\begin{equation}\label{eq:convergence_in_beta_aux}
\mathbf{u}_{\beta}\to \mathbf{u},\quad \mathbf{v}_{\beta}\to \mathbf{v} \qquad \text{ as } \beta\to \infty,
\end{equation}
weakly in $H^1_0(\Omega,\R^k)$ and strongly in $C^{0,\alpha}(\overline \Omega,\R^k)\cap L^p(\Omega)$ for every $\alpha \in (0,1)$, $p\in [1,+\infty]$. By combining this information with Proposition \ref{unif h1 and linf} we have items (2) and (3). By Kato's inequality and the bounds mentioned before, we have the existence of $C>0$ independent on $\beta$ such that
\[
-\Delta |u_{i,\beta}|+ \beta |u_{i,\beta}| \Big(\sum_{j=1}^k u_{j,\beta}^2\Big)^{q-1}\Big(\sum_{j=1}^k v_{j,\beta}^2\Big)^{q}\leq C,
\]
and the same holds for the equation of $v_{i,\beta}$. Since $\Omega$ is smooth $\partial_\nu |u_{i,\beta}|,\partial_\nu |v_{i,\beta}|\leq 0$ on $\partial \Omega$ and an integration of the previous differential inequality yields
\[
\beta\int_\Omega |u_{i,\beta}| \Big(\sum_{j=1}^k u_{j,\beta}^2\Big)^{q-1}\Big(\sum_{j=1}^k v_{j,\beta}^2\Big)^{q}, \beta\int_\Omega |v_{i,\beta}| \Big(\sum_{j=1}^k u_{j,\beta}^2\Big)^{q}\Big(\sum_{j=1}^k v_{j,\beta}^2\Big)^{q-1}\leq C.
\]
We can deduce \eqref{eq:interactionterm_to0}. Moreover, testing the equation of $u_{i,\beta}$ with $u_{i,\beta}-u_{i}$ and the one of $v_{i,\beta}$ with $v_{i,\beta}-v_{i}$  implies that in \eqref{eq:convergence_in_beta_aux} the $H^1_0$--convergence is actually strong, so that (1) is proved. Finally, (4) is a direct consequence of this strong convergence combined with \eqref{eq:orthogonality}--\eqref{eq:monotonicity}
\end{proof}

\begin{proposition}\label{prop:limit_in_p}
From the family of functions $(\mathbf{u}_{\beta},\mathbf{v}_{\beta})$ in Proposition \ref{thm:minimizer_for_c_p} we consider any converging subsequence, and let $(\mathbf{u},\mathbf{v}):=\lim_{\beta\to \infty} (\mathbf{u}_{\beta},\mathbf{v}_{\beta})$ be any limit profile, as in the previous lemma. Then:
\begin{enumerate}
\item regarding the parameters, we have:
\begin{equation}\label{eq:limits_a_b}
    \begin{gathered} 
    \lim_\beta \mu_{ii,\beta}=:\mu_{ii}>0,\quad  \lim_{\beta} \nu_{ii,\beta}=:\nu_{ii}>0,  \quad\lim_{\beta} \mu_{ij,\beta}=\lim_{\beta} \nu_{ij,\beta}=0 \text{ for $i\neq j$},\\
    \lim_{\beta} a_{i,\beta}=:a_{i}>0,\quad \lim_{\beta} b_{i,\beta}=:b_{i}>0,
    \end{gathered}
\end{equation}
\item the limit profiles satisfy
\begin{equation*}
\begin{cases}
a_{i}(-\Delta u_{i} + P^\perp u_i )= \mu_{ii} u_{i} & \text{ in the open set } O_1=\{|\mathbf{u}|>0  \}\\
b_{i}(-\Delta v_{i}+Q^\perp v_i )=  \nu_{ii} v_{i}  & \text{ in the open set } O_2= \{|\mathbf{v}|>0 \};
\end{cases}
\end{equation*}
\item for any $x_0 \in \R^N$ and $r \in (0,\mathrm{dist}(x_0,\partial \Omega))$, the following identity holds
\begin{multline*}
    (2 - N) \sum_{i=1}^{k}  \int_{B_r(x_0)}  \Big(a_{i} |\nabla u_{i}|^2 +  b_{i} \Big(|\nabla v_{i}|^2  \Big)  \\
    =  \sum_{i=1}^k \int_{\partial B_r(x_0)} \Big( a_{i} r (2(\partial_\nu u_{i})^2-|\nabla u_{i}|^2)  + b_{i} r (2(\partial_\nu v_{i})^2-|\nabla v_{i}|^2) \Big)\\
 +\sum_{i=1}^k \int_{\partial B_r(x_0)} r(\mu_{ii} u_{i}^2+\nu_{i} v_{i}^2)-\sum_{i=1}^k \int_{B_r(x_0)} N(\mu_{ii} u_{i}^2 + \nu_{i} v_{i}^2)\\
 -\sum_{i=1}^k  \int_{B_r(x_0)} \Big(2 a_{i} (P^\perp u_i) \nabla u_{i}(x_0) \cdot (x-x_0) +2 b_{i} (Q^\perp v_i) \nabla v_{i}\cdot (x-x_0)\Big)
\end{multline*}
\end{enumerate}
\end{proposition}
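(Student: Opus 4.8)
I would establish the three items of the proposition in turn, working along the subsequence furnished by Proposition~\ref{prop_limit_in_p}, so that $\mathbf{u}_\beta\to\mathbf{u}$, $\mathbf{v}_\beta\to\mathbf{v}$ strongly in $H^1_0(\Omega,\R^k)$ and in $C^{0,\alpha}(\overline{\Omega},\R^k)$, and the interaction term obeys \eqref{eq:interactionterm_to0}. \emph{Item (1).} By \eqref{eq:unif h1 and linf_2} the numbers $a_{i,\beta},b_{i,\beta}$ stay in a fixed interval $[1/C,C]$, and since $F,\varphi\in C^1$ while, by the strong $H^1$ convergence, every quantity occurring in \eqref{eq:coefficients} converges to the one built from $(\mathbf{u},\mathbf{v})$, we get $a_{i,\beta}\to a_i:=\partial_1 F(\varphi(M(\mathbf{u})),\varphi(N(\mathbf{v})))\,\partial_i\varphi(\cdots)>0$ and likewise $b_{i,\beta}\to b_i>0$. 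For the Lagrange multipliers, testing the $i$-th equation of \eqref{eq:equation_for_u_p_v_p} by $u_{l,\beta}$ gives, exactly as in the proof of Proposition~\ref{unif h1 and linf}, $\mu_{il,\beta}=\delta_{il}\,a_{i,\beta}\int_\Omega(|\nabla u_{i,\beta}|^2+(P^\perp u_{i,\beta})^2)+\beta\int_\Omega u_{i,\beta}u_{l,\beta}\big(\sum_j u_{j,\beta}^2\big)^{q-1}\big(\sum_j v_{j,\beta}^2\big)^q$. Hence for $i\neq l$ we have $|\mu_{il,\beta}|\le\beta\int_\Omega\big(\sum_j u_{j,\beta}^2\big)^q\big(\sum_j v_{j,\beta}^2\big)^q\to0$ by \eqref{eq:interactionterm_to0}, while $\mu_{ii,\beta}\to a_i\int_\Omega(|\nabla u_i|^2+(P^\perp u_i)^2)\ge a_i\lambda_1(\Omega)\int_\Omega u_i^2=a_i\lambda_1(\Omega)>0$ (the extra $\beta$-term again vanishing, and $\int_\Omega u_i^2=1$ since $\mathbf{u}\in\Sigma(L^2)$). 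The $\nu_{ij,\beta}$ are handled identically.

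\emph{Item (2).} Fix $x_0\in O_1=\{|\mathbf{u}|>0\}$ (open, by continuity) and $r>0$ with $\overline{B_{2r}(x_0)}\subset O_1$. Since $u_iv_j\equiv0$ in $\Omega$ and $|\mathbf{u}|>0$ on $O_1$, necessarily $\mathbf{v}\equiv0$ on $O_1$, so $\mathbf{v}_\beta\to0$ uniformly on $\overline{B_{2r}(x_0)}$ while $\sum_j u_{j,\beta}^2\ge\delta>0$ there for $\beta$ large. The key point is the \emph{local} decay of the competition term of the $u$-system: writing the $v$-equations of \eqref{eq:equation_for_u_p_v_p} as $-\Delta v_{i,\beta}=h_{i,\beta}-\tfrac{\beta}{b_{i,\beta}}v_{i,\beta}\big(\sum_j v_{j,\beta}^2\big)^{q-1}\big(\sum_j u_{j,\beta}^2\big)^q$, with $\{h_{i,\beta}\}$ bounded in $L^\infty(\Omega)$ (here $Q^\perp v_{i,\beta}$ is bounded in $L^\infty$ because $\psi_j\in L^\infty(\Omega)$, and the $\nu_{ij,\beta}$ are bounded as in Proposition~\ref{unif h1 and linf}), multiplying by $v_{i,\beta}$, summing over $i$ and discarding $\sum_i|\nabla v_{i,\beta}|^2\ge0$, one gets on $B_{2r}(x_0)$
\[
\beta\Big(\sum_j v_{j,\beta}^2\Big)^{q-1}\Big(\sum_j u_{j,\beta}^2\Big)^q\sum_i\frac{v_{i,\beta}^2}{b_{i,\beta}}\ \le\ \Big|\sum_i v_{i,\beta}h_{i,\beta}\Big|+\frac12\Delta\Big(\sum_i v_{i,\beta}^2\Big).
\]
Testing against a cut-off $\eta\in C_0^\infty(B_{2r}(x_0))$ with $0\le\eta\le1$ and $\eta\equiv1$ on $B_r(x_0)$, and moving the Laplacian onto $\eta$, both right-hand terms vanish as $\beta\to\infty$ ($\mathbf{v}_\beta\to0$ uniformly, $\sum_i v_{i,\beta}^2\to0$ uniformly); since $b_{i,\beta}\le C$ and $\sum_j u_{j,\beta}^2\ge\delta$ on $B_r(x_0)$, this forces $\beta\int_{B_r(x_0)}\big(\sum_j v_{j,\beta}^2\big)^q\to0$. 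Passing to the limit in the weak form of the $i$-th $u$-equation tested against $\eta\in C_0^\infty(B_r(x_0))$ then uses strong $H^1$ convergence on the left, Item~(1) for $\sum_j\mu_{ij,\beta}\int u_{j,\beta}\eta\to\mu_{ii}\int u_i\eta$, and $\big|\beta\int u_{i,\beta}\big(\sum_j u_{j,\beta}^2\big)^{q-1}\big(\sum_j v_{j,\beta}^2\big)^q\eta\big|\le C\|\eta\|_\infty\,\beta\int_{B_r(x_0)}\big(\sum_j v_{j,\beta}^2\big)^q\to0$; covering $O_1$ by such balls gives $a_i(-\Delta u_i+P^\perp u_i)=\mu_{ii}u_i$ in $\mathcal{D}'(O_1)$, and symmetrically on $O_2$.

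\emph{Item (3).} I would \emph{not} differentiate the limit equations (valid only inside $O_1$, resp.\ $O_2$), but instead write a Rellich--Pohozaev identity at level $\beta$ and pass to the limit. Testing the $i$-th equation of \eqref{eq:equation_for_u_p_v_p} by $(x-x_0)\cdot\nabla u_{i,\beta}$ on $B_r(x_0)$, using $\int_{B_r}(-\Delta w)(x-x_0)\cdot\nabla w=(1-\tfrac{N}{2})\int_{B_r}|\nabla w|^2+\tfrac{r}{2}\int_{\partial B_r}|\nabla w|^2-r\int_{\partial B_r}(\partial_\nu w)^2$, doing the same for $v_{i,\beta}$, summing over $i$ and multiplying by $2$, one gets an identity in which: the off-diagonal zero-order terms carry a factor $\mu_{ij,\beta}$ or $\nu_{ij,\beta}$ with $i\neq j$; the diagonal ones are $\mu_{ii,\beta}\big(r\int_{\partial B_r}u_{i,\beta}^2-N\int_{B_r}u_{i,\beta}^2\big)$ and its $v$-analogue; the $P^\perp$- and $Q^\perp$-terms remain as $2\sum_i\int_{B_r}\big(a_{i,\beta}(P^\perp u_{i,\beta})(x-x_0)\cdot\nabla u_{i,\beta}+b_{i,\beta}(Q^\perp v_{i,\beta})(x-x_0)\cdot\nabla v_{i,\beta}\big)$; and, after summing the $u$- and $v$-equations, the competition contributions collapse to one term proportional to $\beta\big(N\int_{B_r}(\cdots)-r\int_{\partial B_r}(\cdots)\big)$ with $(\cdots)=\big(\sum_j u_{j,\beta}^2\big)^q\big(\sum_j v_{j,\beta}^2\big)^q$. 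Letting $\beta\to\infty$: by strong $H^1_0$ convergence and Item~(1), all Dirichlet, $P^\perp/Q^\perp$ and diagonal terms converge (on $B_r$ for every $r$, on $\partial B_r$ for a.e.\ $r$, by the trace theorem); the off-diagonal terms and the bulk competition term vanish by Item~(1) and \eqref{eq:interactionterm_to0}; and, after rearranging, the surviving boundary competition term equals a combination of quantities converging for a.e.\ $r$, hence it converges for a.e.\ $r$ to some $\ell(r)\ge0$ with $\int_0^R\ell\le\liminf_\beta\beta\int_{B_R(x_0)}\big(\sum_j u_{j,\beta}^2\big)^q\big(\sum_j v_{j,\beta}^2\big)^q=0$ by Fatou, so $\ell\equiv0$. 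Isolating $(2-N)\sum_i\int_{B_r(x_0)}\big(a_i|\nabla u_i|^2+b_i|\nabla v_i|^2\big)$ on the left then produces exactly the asserted identity (for a.e.\ $r$, which is what is used in the sequel).

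\emph{Main obstacle.} The algebraic manipulations in Items~(1) and~(3) are routine; the genuinely delicate step is the \emph{local} vanishing $\beta\int_{B_r}(\sum_j v_{j,\beta}^2)^q\to0$ of Item~(2) — the global bound \eqref{eq:interactionterm_to0} does not by itself permit passing to the limit in the equations away from the interface — together with the careful a.e.-in-$r$ bookkeeping for the boundary integrals in Item~(3); both are standard and follow the Kato-inequality/summed-squares and Fatou arguments of \cite{NTTV1,STTZ1,STZ1}.
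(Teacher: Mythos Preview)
Your proof is correct and, for Items~(1) and~(3), follows exactly the paper's route: test \eqref{eq:equation_for_u_p_v_p} by $u_{l,\beta}$ and use \eqref{eq:interactionterm_to0} for the Lagrange multipliers; write the Rellich--Pohozaev identity at level $\beta$ and pass to the limit using strong $H^1$ convergence and \eqref{eq:interactionterm_to0} (this is precisely what \cite[Corollary~3.16]{RamosTavaresTerracini}, invoked by the paper, does).

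For Item~(2), however, you overcomplicate and your ``main obstacle'' is not one. The global bound \eqref{eq:interactionterm_to0} \emph{does} suffice to pass to the limit in the equations on $O_1$: if $K\Subset O_1$ then $\sum_j u_{j,\beta}^2\ge\delta>0$ on $K$ for $\beta$ large (uniform convergence), whence
\[
\beta\int_K\Big(\sum_j v_{j,\beta}^2\Big)^q\ \le\ \delta^{-q}\,\beta\int_K\Big(\sum_j u_{j,\beta}^2\Big)^q\Big(\sum_j v_{j,\beta}^2\Big)^q\ \le\ \delta^{-q}\,\beta\int_\Omega\Big(\sum_j u_{j,\beta}^2\Big)^q\Big(\sum_j v_{j,\beta}^2\Big)^q\ \to\ 0,
\]
and since $|u_{i,\beta}|\big(\sum_j u_{j,\beta}^2\big)^{q-1}$ is uniformly bounded on $K$ (by the $L^\infty$ bounds and the lower bound $\delta$ when $q<1$), the competition term in the $u_i$-equation tested against any $\eta\in C_0^\infty(K)$ vanishes. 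This is what the paper's terse ``From this follows (1) and (2)'' means. Your Kato-type detour through the $v$-equations is valid but unnecessary.
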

\begin{proof}
The positivity of the coefficients in  \eqref{eq:limits_a_b} follows directly from Proposition \ref{unif h1 and linf}. Testing the equation of $u_{i,\beta}$ in \eqref{eq:equation_for_u_p_v_p} by $u_{j,\beta}$, we see that
\begin{align*}
\mu_{ij,\beta}&=\delta_{ij} a_{i,\beta} \Big(\int_\Omega |\nabla u_{i,\beta}|^2+(P^\perp u_{i,\beta})^2 \Big)  +\beta \int_\Omega u_{i,\beta}u_{j,\beta} \Big(\sum_{j=1}^k u_{j,\beta}^2\Big)^{q-1} \Big(\sum_{j=1}^k v_{j,\beta}\Big)^q\\
			&\to \delta_{ij} a_{i} \Big(\int_\Omega |\nabla u_{i}|^2+(P^\perp u_i)^2 \Big) 
\end{align*}
as $\beta\to \infty$ by \eqref{eq:interactionterm_to0}, and the same for $\nu_{ij,\beta}$. From this follows (1) and (2). As for (3), it follows exactly as in the proof of \cite[Corollary 3.16]{RamosTavaresTerracini}, taking again into account the strong $H^1_0$--convergence of minimizers (Proposition \ref{prop_limit_in_p}-(1)) and the vanishing property of the interaction term \eqref{eq:interactionterm_to0}.
\end{proof}

In order to reach the conclusion of  Theorem \ref{thm:main_result}, it is convenient to introduce the following definition. Given a measurable set $\omega\subset \R^n$, we define $\widetilde \lambda_k(\omega,\boldsymbol{\phi})$ as the $k$-eigenvalue (counting multiplicities) of the operator $-\Delta +P^\perp$ in $\widetilde H^1_0(\omega)$, which can be characterized as
\[
\widetilde \lambda_k(\omega,\boldsymbol{\phi})=\mathop{\inf_{M\subset \widetilde H^1_0(\omega)}}_{\dim M= k} \sup_{u\in M} \left( \left.\int_\omega |\nabla u|^2+ (P^\perp u )^2 \right) \right/ \int_\omega u^2.
\]
We define $\widetilde \lambda_k(\omega,\boldsymbol{\psi})$ is an analogous way. Clearly, we have
\begin{equation}\label{eq:monotonicity_eigenvalues_phipsi}
\lambda_k(\omega,\boldsymbol{\phi}), \lambda_k (\omega,\boldsymbol{\psi})\geq \lambda_k(\omega).
\end{equation}

\begin{proof}[Conclusion of the proof of Theorem \ref{thm:main_result}]
Let
\[
	\lim_{\beta\to \infty} \mathbf{u}_\beta=:\mathbf{u}=(u_1,\ldots, u_k), \qquad \lim_{\beta\to \infty} \mathbf{v}_\beta=:\mathbf{v}=(v_1,\ldots, v_k)
\]
and $(O_1,O_2):=( \{|\mathbf{u}|>0  \},\{|\mathbf{v}|>0\})$. We recall that $\mathbf{u}$ and $\mathbf{v}$ are continuous functions, thus $O_1$ and $O_2$ are open subsets of $\Omega$.
By Proposition \ref{prop_limit_in_p}--(4) and inequality \eqref{eq:monotonicity_eigenvalues_phipsi},
\[
\int_\Omega \Big( |\nabla u_i|^2 + (P^\perp u_i)^2) \geq \lambda_i(O_1,\boldsymbol{\phi})\geq \lambda_i(O_1),\ \int_\Omega \Big( |\nabla v_i|^2 + (Q^\perp v_i)^2) \geq \lambda_i(O_2,\boldsymbol{\psi})\geq \lambda_i(O_2)
\]
for every $i=1,\ldots, k$. Therefore, using the monotonicity of $F$ and $\varphi$ together with Propositions \ref{unif h1 and linf}, \ref{prop_limit_in_p} and \ref{prop:limit_in_p}, 
\begin{equation}\label{eq:chain_of_identities}
\begin{aligned}
\widetilde c  = & F( \varphi(\widetilde \lambda_1(\omega_1),\ldots, \widetilde \lambda_k(\omega_1)),  \varphi(\widetilde \lambda_1(\omega_2),\ldots, \widetilde \lambda_k(\omega_2)))\\
		=& \lim_{\beta} c_\beta  \\
		=&\lim_\beta E_\beta(\mathbf{u}_\beta,\mathbf{v}_\beta)\\
		       =& F\Big( \varphi\Big( \int_\Omega |\nabla u_1|^2 + (P^\perp u_1)^2, \ldots, \int_\Omega |\nabla u_k|^2 + (P^\perp
		       u_k)^2 \Big), \\
		       	&\phantom{F\Big( \varphi(} \varphi\Big( \int_\Omega |\nabla v_1|^2 + (Q^\perp \mathbf{v})^2_1, \ldots, \int_\Omega |\nabla v_k|^2 + (Q^\perp \mathbf{v})^2_k \Big)       \Big) \\
			\geq  & F( \varphi(\lambda_1(O_1,\boldsymbol{\phi}),\ldots, \lambda_k(O_1,\boldsymbol{\phi})),  \varphi(\lambda_1(O_2,\boldsymbol{\psi}),\ldots, \lambda_k(O_2,\boldsymbol{\psi}))) \\
			\geq  & F( \varphi(\lambda_1(O_1),\ldots, \lambda_k(O_1)),  \varphi(\lambda_1(O_2),\ldots, \lambda_k(O_2))) \\
			\geq  & F( \varphi(\widetilde \lambda_1(O_1),\ldots, \widetilde \lambda_k(O_1)),  \varphi(\widetilde \lambda_1(O_2),\ldots, \widetilde \lambda_k(O_2))) \\
			\geq & \widetilde c.
	\end{aligned}
\end{equation}
Therefore all inequalities are in fact equalities, $(O_1,O_2)$ is an (open) optimal partition for $c= \widetilde c$, and (by the strict monotonicity of $F$ and $\varphi$) $\lambda_i(O_1)=\lambda_i(O_1,\boldsymbol{\phi})$, $\lambda_i(O_2)=\lambda_i(O_2,\boldsymbol{\psi})$ for every $i=1,\ldots, k$.

We now claim that $P^\perp u_i=Q^\perp v_i=0$. Indeed, for $i=1$:
\begin{align*}
\lambda_1(O_1)=\lambda_1(O_1,\boldsymbol{\phi}) =\int_\Omega |\nabla u_1|^2+(P^\perp u_1)^2\geq \int_\Omega |\nabla u_1|^2\geq \lambda_1(O_2),
\end{align*}
so that $P^\perp u_1=0$. Moreover,
\[
\int_\Omega \nabla u_1 \cdot \nabla u_2 =\int_\Omega \nabla u_1\cdot \nabla u_2+(P^\perp u_1)(P^\perp u_2)=0,
\]
and 
\begin{align*}
\lambda_2(O_1)=\lambda_2(O_1,\boldsymbol{\phi}) =\int_\Omega |\nabla u_2|^2+(P^\perp u_2)^2\geq \int_\Omega |\nabla u_2|^2\geq \lambda_2(O_2),
\end{align*}
hence $P^\perp u_2=0$. By iterating this procedure, we obtain $P^\perp u_i$=0 for $i=1,\dots, k$ and, analogously, $Q^\perp v_i=0$, which proves our claim. 

From this we deduce that
\[
-\Delta u_i=\lambda_i(O_1) u_i \text{ in $O_1$},\qquad -\Delta v_i=\lambda_i(O_2) v_i \text{ in $O_2$}
\]
and $\lambda_i(\omega_1)=\lambda_i(O_1)$ for $i=1,\ldots, k$. Moreover  $\mathbf{u}\in L(\boldsymbol{\phi})$, $\mathbf{v}\in L(\boldsymbol{\psi})$, that is,
 \[
\mathbf{u}=M\boldsymbol{\phi},\qquad \mathbf{v}=N\boldsymbol{\psi} 
 \]
for  $M:=(\langle u_i,\phi_j\rangle_{L^2(\Omega)})_{i,j}, N_{ij}:=(\langle v_i,\psi_i\rangle_{L^2(\Omega)})_{i,j}\in \R^{k\times k}$ and, since $(\mathbf{u},\mathbf{v}), (\boldsymbol{\phi},\boldsymbol{\psi})\in \Sigma(L^2)$, then actually $M,N\in \Oeh_k(\R)$, being block diagonal  matrices:
\begin{equation}\label{eq:blockdiagonalmatrix}
M=\diag(M_{1},\ldots, M_{l_1}),\quad  N=\diag(N_{1},\ldots, N_{l_2}),
\end{equation}
where the dimension of each block is at most equal to the dimension of the eigenspace of the associated eigenvalue, and each block is itself an orthogonal matrix.

 This has many important consequences:
\begin{enumerate}
\item In the local Pohozaev identities of Proposition \ref{prop:limit_in_p}-(3) we have $P^\perp u_i=Q^\perp v_i=0$, which corresponds to the statement in \cite[Corollary 3.16]{RamosTavaresTerracini}. Therefore we are in the exact framework of Sections 3 and 4 of \cite{RamosTavaresTerracini}, which implies by Theorem 2.2 therein that $u_i,v_i$ are Lipschitz continuous, $(O_1,O_2)$ is a regular partition, and, given $x_0$ in the regular part of the free boundary,
\begin{equation*}
\mathop{\lim_{x\to x_0}}_{x\in  O_1} \sum_{j=1}^{k} a_{j} |\nabla  u_j(x)|^2=\mathop{\lim_{x\to x_0}}_{x\in  O_2} \sum_{j=1}^{k} b_j |\nabla  v_j(x)|^2\neq 0,
\end{equation*}
where
\begin{align*}
&a_i=\partial F_1(\varphi(\lambda_1(\omega_1),\ldots, \lambda_k(\omega_1)),\varphi(\lambda_1(\omega_2),\ldots, \lambda_k(\omega_2)))\partial_i \varphi(\lambda_1(\omega_1),\ldots, \lambda_k(\omega_1)),\\
&b_i=\partial F_2(\varphi(\lambda_1(\omega_1),\ldots, \lambda_k(\omega_1)),\varphi(\lambda_1(\omega_2),\ldots, \lambda_k(\omega_2)))\partial_i \varphi(\lambda_1(\omega_2),\ldots, \lambda_k(\omega_2))
\end{align*}
Since $\varphi$ is symmetric, then $a_i=a_j$ whenever $\lambda_i(\omega_1)=\lambda_j(\omega_1)$, and the same holds true for the coefficients $b_i$. Combining this remark with the orthogonality of the block matrices in \eqref{eq:blockdiagonalmatrix}, we deduce that also
\begin{equation}\label{eq:reflectionlaw}
\mathop{\lim_{x\to x_0}}_{x\in  O_1} \sum_{j=1}^{k} a_{j} |\nabla  \phi_j(x)|^2=\mathop{\lim_{x\to x_0}}_{x\in  O_2} \sum_{j=1}^{k} b_j |\nabla  \psi_j(x)|^2\neq 0.
\end{equation}
Moreover we find that \eqref{eq:reflectionlaw} does not depend on the starting configuration  $\boldsymbol{\varphi}, \boldsymbol{\psi}$.
\item  Since $M$ and $N$ are invertible, $\phi=M^{-1} \mathbf{u}$ and $\psi=N^{-1} \mathbf{v}$ a.e. in $\Omega$, and since $\mathbf{u},\mathbf{v}$ are Lipschitz continuous, then each $\phi_i$ and $\psi_i$ has a Lipschitz continuous representative.
\item For a.e. $x\in \Omega$ we find
\[
    |\mathbf{u}|^2(x) = \mathbf{u}(x) \cdot \mathbf{u}(x) = M \boldsymbol{\phi}(x) \cdot M \boldsymbol{\phi}(x) = |\boldsymbol{\phi}|^2(x),\qquad |\mathbf{v}|^2(x)=|\boldsymbol{\psi}|^2(x).
\]
\end{enumerate}
Therefore we have $O_j \subseteq \omega_j$ up to a set of Lebesgue measure zero, $\lambda_i(O_j)=\widetilde \lambda_i(O_j) \geq \widetilde \lambda_i(\omega_j)$ for $j=1,2$, $i=1,\ldots, k$. Combining this with the strict monotonicity of $F$ and $\varphi$ and \eqref{eq:chain_of_identities}, we obtain the equality between the eigenvalues. Moreover, the regularity results of $(O_1,O_2)$ allow to conclude that $|O_i\triangle \omega_i |=0$.

We are left to show the spectral gap property, that is, to prove that $\widetilde \lambda_k(\omega_1) < \widetilde \lambda_{k+1}(\omega_1)$. For this purpose, let $E \subset \widetilde H^1_0(\omega_1)$ be the (generalized) eigenspace associated to the eigenvalue $\widetilde \lambda_k(\omega_1)$ and let $\ell \in \N$ be the number of eigenvalue of $\omega_1$ that are strictly less than $\widetilde \lambda_k(\omega_1)$. Our goal is to show that
\[
    \ell + \mathrm{dim}(E) = k.
\]
Assume, in view of a contradiction, that $\widetilde \lambda_k(\omega_1)=\widetilde \lambda_{k+1}(\omega_1)$ or, more generally, that
\begin{equation}\label{eqn contr}
    \ell + \mathrm{dim}(E) \geq k + 1.
\end{equation}
To start off, we apply the previous reasoning to any vector $\boldsymbol{\phi} = (\phi_1, \dots, \phi_{\ell}, \bar \phi_{\ell+1}, \dots, \bar \phi_{k})$ where $\bar \phi_{\ell+1}, \dots, \bar \phi_{k}$ are $k-\ell$ orthonormal functions in $E$. This shows that all the eigenfunctions in $E$ have a Lipschitz representative and that $E$ is made of standard eigenfunctions. In particular, by \eqref{eq:reflectionlaw}, replacing one eigenfunction at the time, for any $\phi_i \perp \phi_j$ in any orthonormal base of $E$ we deduce
\begin{equation}\label{eqn stronger refl}
    |\nabla \phi_{i}|^2=|\nabla \phi_{j}|^2 
\end{equation}
on the regular part of the free boundary. Let now $S\subset \overline{\Omega}$ stand for the support of $E$
\[
    S = \supp \left(\sum_{i=1}^{\mathrm{dim}(E)} |\phi_i| \right) = \mathrm{clo}\left(\sum_{i=1}^{\mathrm{dim}(E)} |\phi_i| >  0\right).
\]
We claim that, under \eqref{eqn contr}, $S$ has a unique connected component. Assume the opposite  and pick two normalized functions $\phi', \phi'' \in E$ with disjoint supports (this is possible since $S$ is disconnected, and $\phi', \phi''$ are orthonormal by construction), and consider other $\mathrm{dim}(E)-2$ functions to complete an orthonormal base of $E$. We immediately find a contradiction with \eqref{eqn stronger refl}. Hence, up to a change of sign, letting $w:= \phi_{i}-\phi_{j}$ for any $\phi_i \perp \phi_j$ in any orthonormal base of $E$, we find
\[
    \begin{cases}
        -\Delta w=\widetilde \lambda_k(O_1) w &\text{ in $O_1$}\\
        w = |\nabla w|=0 &\text{ on $\partial O_1$}.
    \end{cases}
\]
But then, by Hopf's lemma, we have $w=0$ that is $\phi_i = \phi_j$, a contradiction. The same reasoning holds true for $\widetilde \lambda_k(\omega_2)$.
\end{proof}

\subsection*{Acknowledgements}

H. Tavares was supported by the Portuguese government through FCT -- Funda\c c\~ao para a Ci\^encia e a Tecnologia, I.P., under the projects PTDC/MAT-PUR/28686/2017, UID/MAT/04561/2013UID/MAT/04561/2013 and UIDB/MAT/04459/2020. H. Tavares would also like to acknowledge the Faculty of Sciences of the University of Lisbon for granting a semestral sabbatical leave, during which part of this work was developed.

This work was partially supported by the project ANR-18-CE40-0013 SHAPO financed by the French Agence Nationale de la Recherche (ANR).


\end{document}